\newtheorem{Thm}{Theorem}[section]
\newtheorem{Prop}[Thm]{Proposition}
\newtheorem{Cor}[Thm]{Corollary}
\newtheorem{Lem}[Thm]{Lemma}
\theoremstyle{definition}
\newtheorem*{definition}{Definition}
\newtheorem*{remark}{Remark}
\numberwithin{equation}{section}
\begin{document}

\newcommand{\Coim}{\mathrm{Coim}}
\newcommand{\Z}[0]{\mathbb{Z}}
\newcommand{\Q}[0]{\mathbb{Q}}
\newcommand{\F}[0]{\mathbb{F}}
\newcommand{\N}[0]{\mathbb{N}}
\renewcommand{\O}[0]{\mathcal{O}}
\newcommand{\p}[0]{\mathfrak{p}}
\newcommand{\m}[0]{\mathrm{m}}
\newcommand{\Tr}{\mathrm{Tr}}
\newcommand{\Hom}[0]{\mathrm{Hom}}
\newcommand{\Gal}[0]{\mathrm{Gal}}
\newcommand{\Res}[0]{\mathrm{Res}}
\newcommand{\id}{\mathrm{id}}
\newcommand{\cl}{\mathrm{cl}}
\newcommand{\mult}{\mathrm{mult}}
\newcommand{\adm}{\mathrm{adm}}
\newcommand{\tr}{\mathrm{tr}}
\newcommand{\pr}{\mathrm{pr}}
\newcommand{\Ker}{\mathrm{Ker}}
\newcommand{\ab}{\mathrm{ab}}
\newcommand{\sep}{\mathrm{sep}}
\newcommand{\triv}{\mathrm{triv}}
\newcommand{\alg}{\mathrm{alg}}
\newcommand{\ur}{\mathrm{ur}}
\newcommand{\Coker}{\mathrm{Coker}}
\newcommand{\Aut}{\mathrm{Aut}}
\newcommand{\Ext}{\mathrm{Ext}}
\newcommand{\Iso}{\mathrm{Iso}}
\newcommand{\M}{\mathcal{M}}
\newcommand{\GL}{\mathrm{GL}}
\newcommand{\Fil}{\mathrm{Fil}}
\newcommand{\an}{\mathrm{an}}
\renewcommand{\c}{\mathcal }
\newcommand{\W}{\mathcal W}
\newcommand{\R}{\mathcal R}
\newcommand{\crys}{\mathrm{crys}}
\newcommand{\st}{\mathrm{st}}
\newcommand{\CM}{\mathrm{CM\Gamma }}
\newcommand{\CV}{\mathcal{C}\mathcal{V}}
\newcommand{\G}{\mathrm{G}}
\newcommand{\Map}{\mathrm{Map}}
\newcommand{\Sym}{\mathrm{Sym}}
\newcommand{\Spec}{\mathrm{Spec}}
\newcommand{\Gr}{\mathrm{Gr}}
\newcommand{\I}{\mathrm{Im}}
\newcommand{\Frac}{\mathrm{Frac}}
\newcommand{\LT}{\mathrm{LT}}
\newcommand{\Alg}{\mathrm{Alg}}
\newcommand{\MG}{\mathrm{M\Gamma }}
\newcommand{\To}{\longrightarrow}
\newcommand{\md}{\mathrm{mod}}
\newcommand{\MF}{\mathrm{MF}}
\newcommand{\CMF}{\mathcal{M}\mathcal{F}}
\newcommand{\Aug}{\mathrm{Aug}}
\renewcommand{\c}{\mathcal }
\newcommand{\uL}{\underline{\mathcal L}}
\newcommand{\Md}{\mathrm{Md}}
\newcommand{\wt}{\widetilde}
\newcommand{\op}{\mathrm}
\newcommand {\w}{\op{wt}}
\newcommand{\Ad}{\op{Ad}}
\newcommand{\ad}{\op{ad}}
\newcommand{\D}{\mathcal D}
\newcommand{\fr}{\mathfrak}

\title[Groups of automorphisms of local fields]
{Groups of automorphisms of local fields of period $p$ and 
nilpotent class $<p$}
\author{Victor Abrashkin}
\address{Department of Mathematical Sciences, 
Durham University, Science Laboratories, 
South Rd, Durham DH1 3LE, United Kingdom \ \&\ Steklov 
Institute, Gubkina str. 8, 119991, Moscow, Russia
}
\email{victor.abrashkin@durham.ac.uk}
\date{November 8, 2016}
\keywords{local field, Galois group, ramification filtration}
\subjclass[2010]{}

\begin{abstract} 
Suppose $K$ is a finite field extension of $\Q _p$ 
containing a primitive $p$-th root of unity. 
Let $\Gamma _{<p}$ be the Galois group of 
a maximal $p$-extension of $K$ with the Galois group of period $p$ 
and nilpotent class $<p$. In the paper we describe 
the ramification filtration 
$\{\Gamma _{<p}^{(v)}\}_{v\geqslant 0}$ and relate it to an explicit 
form of the Demushkin relation for $\Gamma _{<p}$. 
The results are given in terms of 
Lie algebras attached to the appropriate  $p$-groups by the classical equivalence of 
the categories of $p$-groups and Lie algebras of nilpotent class $<p$. 
\end{abstract}
\maketitle

\section*{Introduction} \label{S0} 

Everywhere in the paper $p$ is a prime number, $p>2$. 

If $G$ is a topological group and $s\in\N$ then $C_s(G)$ is the closure of 
the subgroup of commutators of order $\geqslant s$.   
With this notation, 
$G/G^pC_s(G)$ is the maximal quotient of $G$ of period $p$ 
and nilpotent class 
$<s$. Similarly, if $L$ is a topological Lie $\F _p$-algebra 
then $C_s(L)$ is 
the closure of the ideal of commutators of order $\geqslant s$ and 
$L/C_s(L)$ is the maximal quotient 
of nilpotent class $<s$ of $L$. For any topological $\F _p$-module $\c M$ 
we use the notation $L_{\c M}=L\hat\otimes_{\F _p}\c M$. In particular, if $k$ is a finite field extension of 
$\F _p$ and $\sigma $ is the Frobenius automorphism of $k$ then $\id _L\otimes\sigma $ acts 
on $L_k$. For simplicity, we denote $\id _L\otimes\sigma $ just by $\sigma $. Note that 
$L_k|_{\sigma =\id }=L$. 

Suppose $\Q [[X,Y]]$ is a free associative algebra in two  (non-commuting)
variables $X$ and $Y$ with coefficients in $\Q $. 
Then the classical Campbell-Hausdorff 
formula 
$$X\circ Y=\log (\exp(X)\cdot \exp(Y))=X+Y+(1/2)[X,Y]+\dots $$ 
has $p$-integral coefficients modulo $p$-th commutators. 
Therefore, for any 
topological 
Lie $\F _p$-algebra $L$ of nilpotent class $<p$, we can introduce the  
topological group $G(L)$ which equals $L$ as a set and is provided 
with the Campbell-Hausdorff composition law 
$l_1\circ l_2=l_1+l_2+(1/2)[l_1,l_2]+\dots $. The correspondence 
$L\mapsto G(L)$ induces equivalence of the category 
of Lie $\F _p$-algebras of nilpotent class $s_0<p$ 
and the category of $p$-groups of period $p$ of the same nilpotent class 
$s_0$ \cite{La}.  
Note that under this equivalence any morphism of Lie algebras $L_1\To L$ 
is at the same time a group homomorphism $G(L_1)\To G(L)$. In particular, the ideals $I$ of a 
Lie algebra $L$ are precisely all normal subgroups $G(I)$ in $G(L)$, and two elements 
$l_1,l_2$ of the Lie algebra $L$ are congruent modulo the ideal $I$ if and only 
if these elements (when considered as elements of the group $G(L)$)  
are congruent modulo the normal subgroup $G(I)$.

Let $K$ be a complete discrete valuation field with finite residue field 
$k\simeq\F _{p^{N_0}}$, $N_0\in\N $. 
Denote by $K_{sep}$ a separable closure of $K$ and 
set $\op{Gal}(K_{sep}/K)=\Gamma _K$. 

A profinite group structure of $\Gamma _K$  
is well-known, \cite{JW}. Most significant information 
about this structure comes from the maximal $p$-quotient 
$\Gamma _K(p)$ of $\Gamma _K$, \cite{Ko, Se2, Se3}.  As a matter of fact, 
the structure of $\Gamma _K(p)$ is not too complicated: its 
(topological) module of generators equals $K^*/K^{*p}$ and  
if $K$ has no non-trivial $p$-th roots of unity 
(e.g. if $\op{char}K=p$) then $\Gamma _K(p)$ is pro-finite free; otherwise, 
$\Gamma _K(p)$  has only one (the Demushkin) relation of a very special form. 

On the other hand, $\Gamma _K$ has additional structure given by the decreasing 
series of normal (ramification) subgroups $\Gamma _K^{(v)}$, $v\geqslant 0$. 
This additional structure on $\Gamma _K$ (or even on 
the pro-$p$-group $\Gamma _K(p)$) 
is sufficient to recover all properties of the original complete discrete  
valuation field $K$, \cite{Mo, Ab6, Ab11}. 

Note that on the level of abelian extensions the 
ramification filtration of $\Gamma _K^{ab}$ 
is completely described by class field theory and 
has very simple structure. But already on 
the level of $p$-extensions with Galois groups of 
nilpotent class $\geqslant 2$,   
the ramification filtration starts demonstrating 
highly non-trivial behaviour, cf. \cite{Ab2, Ab4, Go1, Go2}. 

In \cite{Ab1, Ab2, Ab3} the author introduced new techniques 
(nilpotent Artin-Schreier theory) which allowed us to work 
with $p$-extensions  
of characteristic $p$ with Galois groups of nilpotent class $<p$. 
As we have mentioned already,  such groups come from Lie algebras 
and our main result describes the 
ideals coming from ramification 
subgroups. 

Consider the case of complete discrete valuation fields $K$ of mixed 
characteristic containing a primitive $p$-th root of unity $\zeta _1$.  
Let $K_{<p}$ be the maximal $p$-extension of $K$ in 
$K_{sep}$ with the Galois group 
of nilpotent class $<p$ and period $p$. Then $\Gamma _{<p}:=\op{Gal}(K_{<p}/K)=
\Gamma _K/\Gamma _K^pC_p(\Gamma _K)$ is a group 
with  finitely many generators and one relation. 
(This terminology makes sense in the category of 
$p$-groups of nilpotent class $<p$ and period $p$.)
Let $\{\Gamma _{<p}^{(v)}\}_{v\geqslant 0}$ be the ramification filtration 
of $\Gamma _{<p}$. If $L$ is a Lie $\F _p$-algebra 
such that $\Gamma _{<p}=
G(L)$ then for all $v$, $\Gamma _{<p}^{(v)}=G(L^{(v)})$, where 
$L^{(v)}$ are ideals in $L$. In this paper we determine 
the structure of $L$   
and ``ramification'' ideals $L^{(v)}$. In particular, 
the Demushkin relation in $L$ 
appears in our setting in terms related directly to 
the ramification ideals $L^{(v)}$.

Note that a similar technique (papers in progress) 
can be used to treat not only more general groups $\Gamma _{<p}(M):=
\Gamma _K/\Gamma _K^{p^M}C_p(\Gamma _K)$, $M\in\N $, but 
also the case of higher local fields $K$.

For the first approach to the above problem cf. \cite{Zi}, where   
the ramification filtration in 
$\Gamma _K^pC_2(\Gamma _K)/\Gamma _K^pC_3(\Gamma _K)$ was studied  
under some restrictions to the basic field $K$. The methods and techniques 
from \cite{Zi} could not be applied to a more general situation.  
The principal advantage of our method is that from the very beginning 
we work with the whole group $\Gamma _{<p}$ rather than with the  
quotients of its central series. 

\subsection{Main steps}  \label{S0.1} 
\ \ 

a) {\it Relation to the characteristic $p$ case.}

Let $\pi _0$ be a fixed uniformizer in $K$ and 
$\wt{K}=K(\{\pi _n\ |\ n\in\N\})$, where $\pi _n^p=\pi _{n-1}$. 
Then the field-of-norms functor $X$ \cite{Wtb1}, 
gives us a complete discrete  
valuation field  $X(\wt K)=\c K$  
of characteristic $p$ with residue field $k$ and fixed uniformizer 
$t=\varprojlim \pi _n$.  
We have also a natural identification of $\c G=
\Gal (\c K_{sep}/\c K)$ with $\Gamma _{\wt{K}}=\Gal (\bar K/\tilde{K})$, 
which is  compatible 
with the appropriate ramification filtrations in 
$\c G$ and $\Gamma _K$ 
via the Herbrand function $\varphi _{\wt{K}/K}$.  
This gives us the following fundamental short exact sequence 
in the category of $p$-groups (where $\c G_{<p}:=\c G/\c G^pC_p(\c G)$) 
\begin{equation} \label{E0.1} 
\c G_{<p}\overset{\iota _{<p}}
\To \Gamma _{<p}\To \op{Gal}(K(\pi _1)/K)
\left (=\langle\tau _0\rangle ^{\Z /p}\right )\To 1\, ,
\end{equation} 
where $\tau _0$ is such that $\tau _0(\pi _1)=\zeta _1\pi _1$. 
\medskip 

b) {\it Nilpotent Artin-Schreier theory.} 
\medskip 

This theory allows us to fix an identification 
$\c G _{<p}=G(\c L)$, where $\c L$ is a profinite 
Lie algebra over $\F _p$.  
The identification depends only on the above uniformizer $t$ in $\c K$ 
and a choice of 
$\alpha _0\in k$ such that $\op{Tr}_{k/\F _p}(\alpha _0)=1$. 
This theory also 
provides us with 
the system of free generators 
$\{D_{an}\ |\ \op{gcd}\,(a,p)=1, n\in\Z /N_0 \}\cup \{D_0\}$ 
of $\c L_k$. Note that we shall treat $D_0$ in the context of 
all $D_{an}$ by setting for all 
$n\in\Z /N_0$, $D_{0n}=(\sigma ^n\alpha _0)D_0$. 
\medskip 

c) {\it Ramification filtration in 
$\c G _{<p}$.}
\medskip 

With respect to the above identification 
$\c G_{<p}=G(\c L)$, the ramification subgroups 
$\c G _{<p}^{(v)}$ come from the ideals $\c L^{(v)}$ 
of $\c L$. In \cite{Ab1, Ab2, Ab3} 
we constructed explicitly the elements $\c F^0_{\gamma ,-N}\in\c L_k$ with  
non-negative 
$\gamma \in\Q $ and $N\in\Z $, such that 
for any $v\geqslant 0$ and sufficiently large $N\geqslant \wt{N}(v)$, 
$\c L^{(v)}$ appears as the minimal ideal in $\c L$ such that 
$\c F^0_{\gamma ,-N}\in\c L^{(v)}_k$ for all $\gamma\geqslant v$.  
\medskip 

d) {\it Fundamental sequence of Lie algebras.}
\medskip 

Using the above mentioned equivalence of the categories 
of $p$-groups and Lie algebras 
we can replace \eqref{E0.1} by the following exact 
sequence of Lie $\F _p$-algebras 
\begin{equation} \label{E0.2} 
0\To \c L/\c L(p)\To L\To \F _p\tau _0\To 0\, , 
\end{equation}
where $G(\c L(p))=\op{Ker}\,\iota _{<p}$ and 
$G(L)=\Gamma _{<p}$. If $\tau _{<p}$ is a lift 
of $\tau _0$ to $L$ then the structure of \eqref{E0.2} 
can be given via the differentiation 
$\op{ad}\tau _{<p}$ on $\bar{\c L}=\c L/\c L(p)$. 
\medskip 

e) {\it Replacing $\tau _0$ by $h \in\op{Aut}\c K$}.  
\medskip 

When studying the structure of \eqref{E0.2} we can approximate $\tau _0$ by 
some $h\in\op{Aut}\c K$. 
 This automorphism $h$ is defined in terms of the expansion 
of $\zeta _1$ in powers of our fixed uniformizer $\pi _0$. 
Then the formalism of nilpotent Artin-Schreier theory 
allows us to specify a lift $\tau _{<p}$, 
to find the ideal $\c L(p)$ and to 
introduce a recurrent procedure of  
obtaining the values $\op{ad}\tau _{<p}(D_{an})\in\bar{\c L}_k$ and 
$\op{ad}\tau _{<p}(D_0)\in\bar{\c L}$.
\medskip 

f) {\it Structure of $L$.}
\medskip 

Analyzing the above recurrent procedure modulo $C_2(\bar{\c L})_k$ 
we can see that the knowledge of the elements 
$\op{ad}\tau _{<p}(D_{an})$ allows us to kill all generators 
$D_{an}$ of $\bar{\c L}_k$ with $a>e^*:=e_Kp/(p-1)$. 
(Here $e_K$ is the ramification index of $K$ over $\Q _p$.)  
In other words, $L_k$ has the minimal system of generators 
$\{D_{an}\ |\ 1\leqslant a<e^*, n\in\Z/N_0\}
\cup\{D_0\}\cup\{\tau _{<p}\}$. On the other hand,  
$\op{ad}\tau _{<p}(D_0)\in C_2(\bar{\c L})\subset C_2(L)$ and, therefore, gives us  
the (unique) Demushkin  relation in $L$.
\medskip 

g) {\it Ramification subgroups $L^{(v)}$ in $L$.}
\medskip 

For $v> e^*$, all ramification ideals $L^{(v)}$ are contained in $\bar{\c L}$ and  
come from the appropriate 
ideals ${\c L}^{(v')}$, where the upper indices 
$v$ and $v'$ are related by the Herbrand function $\varphi _{\wt{K}/K}$ 
of the field extension 
$\wt K/K$. 
As one of immediate 
applications we found 
for $2\leqslant s<p$, the biggest 
upper ramification numbers $v[s]$ of the maximal $p$-extensions 
$K[s]$ of $K$ with the Galois groups of period $p$ and 
nilpotent class $\leqslant s$. We shall get the remaining ramification ideals 
$L^{(v)}$ with $v\leqslant e^*$ if we 
specify a ``good'' lift $\tau _{<p}$ 
of $\tau _0$, i.e. such that $\tau _{<p}\in L^{(e^*)}$. 
(The concept of a ``good'' lift is formalized in 
the definition of arithmetical lift in Subsection \ref{S4.2}.) 
This is the most difficult 
part of the paper 
where we need a technical result from \cite{Ab3}. 
\medskip 

h) {\it Explicit formulas for $\op{ad}\tau _{<p}$ 
with ``good'' $\tau _{<p}$.}
\medskip 

The formulas for $\op{ad}\tau _{<p}(D_{an})$ and $\op{ad}\tau _{<p}(D_0)$ 
can be obtained modulo $C_3(L_k)$ as a second central step 
in our recurrent procedure mentioned in above item e), 
cf. calculations in Subsection \ref{S3.6}. In Section \ref{S5} we 
obtain a general formula for $\op{ad}\tau _{<p}(D_0)$. This gives  
an explicit form of the Demushkin relation in terms of the 
ramification generators $\c F^0_{\gamma ,-N}$ from item c). 
\medskip 

\subsection{Main results} \label{S0.2}

Introduce the weights $\op{wt} (l)$ of elements 
$l\in\c L_k$ by setting $\op{wt} (D_{an})=s\geqslant 1$ if 
$(s-1)e^*\leqslant a<se^*$, i.e. $\op{wt}(D_{an})=[a/e^*]+1$.

\begin{Thm} \label{T0.1} 
 {\rm a)} \ $\c L(p)=\{l\in\c L\ |\ \op{wt}(l)\geqslant p\}$;
 \medskip  

 {\rm b)}\ if  
 $\c L(s)=\{l\in\c L\ |\ \op{wt}(l)\geqslant s\}$  
 then $C_s(L)=\c L(s)/\c L(p)$. 
\end{Thm}

Suppose for all $a$, $V_{a0}\in \bar{\c L}_k$ are such that 
$\ad\tau _{<p}(D_{a0})=V_{a0}$. In particular, 
$V_{00}=\alpha _0V_0$, where $V_0=(\ad\tau _{<p})D_0\in\bar{\c L}$. 
The knowledge of 
these elements determines uniquely the differentiation $\ad\tau _{<p}$ 
(note that for all $n$, $\ad\tau _{<p}(D_{an})=\sigma ^n(V_{a0})$). 

Suppose $E(X)=\exp (X+X^p/p+\dots +X^{p^n}/p^n+\dots)\in \Z _p[[X]]$ 
is the Artin-Hasse exponential. 

Let $\omega (t)\in k[[t]]$ be such that 
$E(\omega (\pi _0))=\zeta _1\,\op{mod}\,p$. 
\medskip 

\begin{Thm} \label{T0.2}
 The elements $V_{a0}$ can be found from the following 
recurrent relation in $\bar{\c L}_{\c K}$
$$ 
\sigma c_1-c_1+\sum _{a\in \Z ^0(p)}t^{-a}V_a=$$ 
$$-\sum _{k\geqslant 1}\,\frac{1}{k!}\,t^{-(a_1+\dots +a_k)}
\omega (t) ^p
[\dots [a_1D_{a_10},D_{a_20}],\dots ,D_{a_k0}]$$
$$-\sum _{k\geqslant 2}\frac{1}{k!}t^{-(a_1+\dots +a_k)}
[\dots [V_{a_1},D_{a_20}],\dots ,D_{a_k0}]$$
$$-\sum _{k\geqslant 1}\,\frac{1}{k!}\,t^{-(a_1+\dots +a_k)}
[\dots [\sigma c_1,D_{a_10}],\dots ,D_{a_k0}], $$
where in all last three sums the indices  
$a_1,\dots ,a_k$ run over the set  $\Z ^0(p):=\{a\in\N\ |\ \op{gcd}(a,p)=1\}\cup\{0\}$.
\end{Thm}

In the above system of equations we are looking for the solutions of the form 
$\{c_1\in \bar{\c L}_{\c K}, \{V_{a0}\in\bar{\c L}_k\ |\ a\in\Z ^0(p)\}\}$. These solutions 
correspond to different choices of the lift $\tau _{<p}$ of $\tau _0$, in particular, 
$c_1$ is (very strict) invariant of such a lift $\tau _{<p}$. 

Suppose $c_1=\sum _{m\in\Z}c_1(m)t^m$. 

Let $\bar{\c L}^{(e^*)}$ be the image of $\c L^{(e^*)}$ in $\bar{\c L}$. 

Let   
$\omega (t)^p=\sum _{j\geqslant 0}A_jt^{e^*+pj}$ 
with coefficients $A_j\in k$. 

\begin{Thm} \label{T0.3} 
 $\tau _{<p}$ is a ``good'' lift, cf. Subsection \ref{S0.1} step g),  
 iff  
 $$c_1(0)\equiv \sum _{j\geqslant 0}
\sum _{0\leqslant i<\wt{N}(e^*)}\sigma ^i(A_j\c F^0_{e^*+pj,-i})                          
\,\op{mod}\,\bar{\c L}_k^{(e^*)}\, ,$$
cf. item c) for the definition of $\wt{N}(e^*)$. 
\end{Thm}

\begin{Thm} \label{T0.4}
 {\rm a)}\ If $v>e^*$ then $\Gamma _{<p}^{(v)}=G(L^{(v)})$, where 
 $L^{(v)}$ is the image of 
$\c L^{(v^*)}$ in $\bar{\c L}$ and $v^*=e^*+p(v-e^*)$; 
\medskip 

{\rm b)}\ if $v\leqslant e^*$ and $\tau_{<p}$ is ``good''  
then $\Gamma _{<p}^{(v)}=G(L^{(v)})$, where $L^{(v)}$ is  
generated by the image of $\c L^{(v)}$ in $\bar{\c L}$ and $\tau _{<p}$. 
\end{Thm}

\begin{Thm} \label{T0.5}
 If $2\leqslant s<p$ then $v[s]=e_K(1+s/(p-1))-1/p$. 
\end{Thm}

\begin{remark} 
$v[1]=e^*(=e_K(1+1/(p-1))$ is a well-known fact which follows directly 
from definitions and Kummer theory.
\end{remark}

 Consider the set of all $(a_1,n_1,\dots ,a_s,n_s)$, where all $a_i\in\Z ^0(p)$,   
$n_i\in\Z $ are such that $n_1\geqslant n_2\geqslant\dots\geqslant n_s=0$ and 
$\sum _{1\leqslant i\leqslant s}[a_i/e^*]\leqslant p-1-s$. 

Let 
$\delta ^+(e^*)$ be the minimum of positive 
values of 
$$(e^*+pj)-p^{-n_1}(a_1p^{n_1}+\dots +a_sp^{n_s})\,,$$
where $(a_1,n_1,\dots ,a_s,n_s)$ runs over the set of above defined vectors 
and $j$ runs over the set of all non-negative integers. 
Set 
$$N^+(e^*)=\min\{n\in\N \ |\ p^n\delta ^+(e^*)\geqslant e^*(p-1)\}\, .$$ 

**********

Fix $N^0\geqslant N^+(e^*)-1$ and set $\Omega ^0=\sum _{j\geqslant 0}
A_j\c F^0_{e^*+pj,-N^0}$.  
\medskip  

Introduce the operators $F_0$ and $G_0$ on $\bar{\c L}_k$ such that for any $l\in\bar{\c L}_k$,  
$$F_0(l)=\hskip -4pt \sum _{1\leqslant k<p}\hskip -4pt\frac{\alpha _0^{k-1}}{k!}
[\dots [l,\underset{k-1\ \text{times}}
{\underbrace{D_{0}],\dots ,D_{0}}}], 
G_0(l)=\hskip -4pt \sum _{0\leqslant k<p}\hskip -4pt \frac{\alpha _0^k}{k!}[\dots [l,
\underset{k\ \text{times}}{\underbrace{D_{0}],\dots ,D_{0}}}]\, .$$

Consider the relation 
\begin{equation} \label{E0.3} 
(G_0\sigma -\id )c^0+F_0(V_0)=-G_0\sigma ^{N^0+1}\Omega ^0\,.
 \end{equation}

\begin{Thm} \label{T0.6} {\rm a)}\ There is a bijection between different lifts $\tau _{<p}$ and 
solutions $(c^0, V_0)$ of relation \eqref{E0.3}, with $c^0\in\bar{\c L}_k$ and 
$V_0\in\bar{\c L}$. 
\medskip 

{\rm b)}\ If $\tau _{<p}$ corresponds to $(c^0,V_0)$ then the Demushkin relation 
appears in the form 
$(\ad\,\tau _{<p})D_0=V_0$;
\medskip 

{\rm c)}\ If $N^0\geqslant\wt{N}(e^*)$ then 
 $\tau _{<p}$ is ``good'' if and only if $c^0\in \bar{\c L}_k^{(e^*)}$. 
\end{Thm}

\begin{Cor} \label{C0.7} 
 {\rm a)}\ For any lift $\tau _{<p}$, 
 $$(\op{ad}\,\tau _{<p})D_0
+\sum _{0\leqslant n<N_0}\sigma ^n(\Omega ^0)\in [\bar{\c L},D_0] ;$$
\medskip 
{\rm b)}\ if $k=\F _p$ then there is a ``good'' lift $\tau _{<p}$, such that 
the Demushkin relation appears in the form  
$(\op{ad}\tau _{<p})D_0+F_0^{-1}(\Omega ^0)=0$.  
 \end{Cor}

\subsection{Concluding remarks} \label{S0.3} 

Our description of $\Gamma _{<p}$ 
together with its ramification filtration may serve as a 
guide to what we could 
expect a nilpotent local class field theory should be. 
Our approach gives the objects of this theory 
on the level of quotients of nilpotent class $<p$ 
together with induced ramification filtration. 
Regretfully, our description is not functorial: 
it depends on a choice of a uniformizing element in $K$. 
 
It would be very interesting to compare our results with 
the construction of $\Gamma _K$ in \cite{LF1}, cf. also \cite{KdS}. 
This construction uses iterations of the Lubin-Tate theories via  
the field-of-norms functor and is done inside the group of 
formal power series with the 
operation given by their composition. However, it 
is not clear how to extract from that  
construction even well-known properties of  
the Galois group of a maximal $p$-extension of $K$.  
\medskip  

The content of this paper is arranged in a slightly different order 
compared to above principal steps a)-h). In Section 1 we briefly 
discuss auxiliary facts and constructions from the characteristic $p$ case. 
In Section 2 we study an analogue $\c G_{h}$ of 
$\Gamma _{<p}$ which appears if 
we replace $\tau _0$ by a suitable $h\in\op{Aut}\c K$; we also  describe  
the commutator subgroups of $\c G_{h}$ and, in particular, 
find the appropriate ideal $\c L(p)$.   
In Section 3 we develop the techniques 
allowing us to switch the languages of $p$-groups and Lie algebras. 
In Section 4 we establish the Criterion to 
characterize ``good'' lifts $h_{<p}$ of $h$  
and in Section 5 we compute the appropriate \lq\lq Demushkin\rq\rq\ relation for such ``good'' lifts. 
Finally, in Section 6 we prove that all our results obtained for the group 
$\c G_{h}$ are actually valid in the context of the group $\Gamma _{<p}$. 
\medskip 

{\it Acknowledgements.} The author expresses a deep gratitude to the referee: 
his advices allowed the author to avoid a considerable amount of inexactitudes and 
to improve very much the quality of the original exposition.

\section{Preliminaries}  \label{S1} 

\subsection{Covariant nilpotent Artin-Schreier theory} \label{S1.1} 

Suppose $\c K$ is a field of characteristic $p$,  
$\c K_{sep}$ is a separable closure of $\c K$ and 
$\c G=\Gal (\c K_{sep}/\c K)$. We assume that the composition $g_1g_2$ 
of $g_1,g_2\in\c G$ is such that for any $a\in\c K_{sep}$, 
$g_1(g_2a)=(g_1g_2)a$.

In \cite{Ab1, Ab2, Ab3} the author developed a nilpotent analogue of the classical 
Artin-Schreier theory of cyclic extensions of fields of characteristic $p$. 
The main results of this theory (which will 
be called the contravariant nilpotent Artin-Schreier theory) 
can be briefly explained as follows. 

Let $\c G^0$ be the group such that $\c G^0=\c G$ as sets but 
for any $g_1,g_2\in\c G$ their composition in $\c G^0$ equals $g_2g_1$. 
In other words, we assume that $\c G^0$ acts on $\c K_{sep}$ via 
$(g_1g_2)a=g_2(g_1(a))$.

Let $L$ be a Lie $\F _p$-algebra of nilpotent class $<p$. 
Then the absolute Frobenius $\sigma $ and 
$\c G^0$ act on $L_{\c K_{sep}}$ through the second 
factor. We have $L_{\c K_{sep}}|_{\sigma =\id }=L$ and  
$(L_{\c K_{sep}})^{\c G^0}=L_{\c K}$.  

For any $e\in G(L_{\c K})$, the set of $f\in G(L_{\c K_{sep}})$ such that 
$\sigma (f)=f\circ e$ is not empty. Define 
the group homomorphism 
$\pi ^0_f(e):\c G^0\to G(L)$ by setting for any $g\in \c G^0$, 
$\pi ^0_f(e):g\mapsto g(f)\circ (-f)$. 

\begin{remark} 
 Strictly speaking $g(f)$, where $g\in\c G^0$, should be written in the form 
$(\id _{L}\otimes g)f$ but in most cases we use the first notation. 
On the other hand, we would prefer the second notation if, 
say, $g\in \op{Aut} \c K_{sep}$ and 
$g|_{\c K}\ne\id _{\c K}$. (Similarly, we have already agreed in the Introduction 
 to use the notation 
$\sigma $ instead of $\id _{L}\otimes\sigma $.) 
\end{remark}

We have the following properties:
\medskip 

a) for any group homomorphism $\eta :\c G^0\to G(L)$ 
there are $e_{\eta }\in G(L_{\c K})$ and $f_{\eta }\in G(L_{\c K_{sep}})$ such that 
$\sigma (f_{\eta })=f_{\eta }\circ e_{\eta }$ and 
$\eta =\pi ^0_{f_{\eta }}(e_{\eta })$;
\medskip 

b) two homomorphisms $\pi ^0_f(e)$ and $\pi ^0_{f_1}(e_1)$ 
from $\c G^0$ to $G(L)$ are conjugated via some element from $G(L)$ iff 
there is an 
$x\in G(L_{\c K})$ such that $e_1=(-x)\circ e\circ \sigma (x)$. 
\medskip 

The covariant version of the above theory can be developed quite similarly. 
We just use the relations $\sigma (f)=e\circ f$ and 
$g\mapsto (-f)\circ g(f)$ to define the group homomorphism 
$\pi _f(e):\c G\To G(L)$. Then we have the obvious analogs 
of above properties 
a) and b) with  the opposite formula $e_1=\sigma (x)\circ e\circ (-x)$ 
in the case b). 

In this paper we use the covariant theory but need some results from 
\cite{Ab3} which were obtained in the contravariant setting. These  
results can be adjusted to the covariant theory just by 
replacing all involved group or Lie structures to the opposite ones, e.g. 
cf. Subsection \ref {S1.4} below.

\subsection{Lifts of analytic automorphisms} \label{S1.2}
Let $\Aut \,\c K$ and $\Aut \,{\c K_{sep}}$ 
be the groups of continuous automorphisms of 
$\c K$ and $\c K_{sep}$, respectively.  
For $h\in\Aut\, {\c K}$, let $h_{sep}\in\Aut\, {\c K_{sep}}$ be 
a lift of $h$, i.e. $h_{sep}|_{\c K}=h$.

Suppose $L$ is a Lie $\F _p$-algebra of nilpotent class $<p$.  
Let $e\in G(L_{\c K})$, choose $f\in G(L_{\c K_{sep}})$ such that 
$\sigma (f)=e\circ f$, set $\eta =\pi _f(e)$ and $\c K_e=
\c K_{sep}^{\Ker\,\eta }$. Then $\c K_e$ does not depend on 
a choice of $f$: if $f'\in G(L_{\c K_{sep}})$ is such that 
$\sigma (f')=e\circ f'$ then $f'=f\circ l$ with 
$l\in G(L)$ 
and $\Ker\,\eta =\Ker\,\pi _{f'}(e)$. 

\begin{Prop} \label{P1.1} 
Suppose $\eta :\c G\To G(L)$ is epimorphic. Then 
the following conditions are equivalent:
 \medskip 
 
 {\rm a)} $h_{sep}(\c K_e)=\c K_e$;
 \medskip 
 
 {\rm b)} there are $c\in G(L_{\c K})$ and $A\in\Aut L$ such that 
 $(\id _{L}\otimes h_{sep})(f)=c\circ (A\otimes\id _{\c K_{sep}})(f)$. 
\end{Prop}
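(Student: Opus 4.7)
The plan is to prove both implications by tracking how the distinguished lift $f\in G(L_{\c K_{sep}})$ transforms under $h_s:=h_{sep}$ versus under $g\in\c G$. Since $h$ preserves $\c K$, the map $g\mapsto g':=h_s^{-1}gh_s$ is an automorphism of $\c G$, and as operators on $L_{\c K_{sep}}$ one has the tautology $g\circ h_*=h_*\circ g'$, where $h_*:=\id_L\otimes h_s$. The covariant Artin--Schreier relation $\eta(g)=(-f)\circ g(f)$ rewrites as $g(f)=f\circ \eta(g)$, and I will repeatedly use that elements of $L\subset L_{\c K_{sep}}$ (lying in $L\otimes 1$) are fixed by both $\c G$ and $h_*$, that any $A\in\Aut L$ acts on the first tensor factor and so commutes with every $g\in\c G$, and that any $c\in G(L_{\c K})$ is $\c G$-fixed. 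These ingredients reduce everything to formal bookkeeping in the Campbell--Hausdorff group.

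For (b) implies (a), starting from $h_*(f)=c\circ A(f)$, I apply an arbitrary $g\in\c G$ to both sides. On the left, $g\circ h_*=h_*\circ g'$ together with $g'(f)=f\circ\eta(g')$ yields $h_*(f)\circ \eta(g')$; on the right, using $g(c)=c$, commutation of $A$ with $g$, and $g(f)=f\circ\eta(g)$, I get $c\circ A(f)\circ A(\eta(g))$. Cancelling the common factor $h_*(f)=c\circ A(f)$ produces the key identity
\begin{equation*}
\eta(h_s^{-1}gh_s)=A(\eta(g)),\qquad g\in\c G.
\end{equation*}
Since $A$ is a bijection, this forces $h_s^{-1}\Ker(\eta)h_s=\Ker(\eta)$, which by Galois theory is precisely the condition $h_s(\c K_e)=\c K_e$. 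For (a) implies (b), I run the argument in reverse: $h_s(\c K_e)=\c K_e$ says $g\mapsto g'$ preserves $\Ker\eta$, so (using surjectivity of $\eta$) it descends to an automorphism $A_0$ of $G(L)$. The equivalence between $p$-groups of period $p$, nilpotent class ${<p}$, and Lie $\F_p$-algebras of the same class identifies $A_0$ with a unique $A\in\Aut L$ for which the displayed identity holds by construction. I then set $c:=h_*(f)\circ(-A(f))$ and verify directly that $g(c)=c$ for every $g\in\c G$: the trailing factors $\eta(g')$ and $-A(\eta(g))$ produced by the two summands cancel exactly because of the key identity. Hence $c\in G(L_{\c K_{sep}})^{\c G}=G(L_{\c K})$ and $h_*(f)=c\circ A(f)$.

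The main obstacle is conceptual rather than technical: keeping track of composition order in the Campbell--Hausdorff group, the direction of conjugation ($g\mapsto h_s^{-1}gh_s$ versus $g\mapsto h_sgh_s^{-1}$), and carefully distinguishing the two tensor factors on which $A$ versus $g,h_s$ act. Once these conventions are pinned down, both directions reduce to the single identity $\eta(h_s^{-1}gh_s)=A(\eta(g))$, which is the precise compatibility between $h_s$ and the Galois action on $f$ that the proposition encodes. A minor point is the appeal to the category equivalence in (a) implies (b) to lift the group automorphism $A_0$ of $G(L)$ to a Lie automorphism $A$ of $L$, but this is the foundational tool underlying the whole paper.
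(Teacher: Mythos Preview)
Your proof is correct and follows essentially the same route as the paper's own argument: both hinge on the identity $\eta(h_{sep}^{-1}gh_{sep})=A(\eta(g))$, use it to show (b)$\Rightarrow$(a) by observing that $A$ bijective forces $\Ker\eta$ to be $h_{sep}$-stable, and for (a)$\Rightarrow$(b) use surjectivity of $\eta$ to descend conjugation to an automorphism $A$ of $G(L)=L$ and then produce $c$ as $h_*(f)\circ(-A(f))$. The only cosmetic difference is that the paper packages the last step by noting that $f_1=(\id_L\otimes h_{sep})f$ and $f'=(A\otimes\id)f$ define the same homomorphism $\pi_{f_1}(e_1)=\pi_{f'}(e')$, hence differ by an element of $G(L_{\c K})$, whereas you verify $\c G$-invariance of $c$ by a direct cancellation; these are the same computation.
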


\begin{proof} Let $e_1=(\id _{L}\otimes h)e$, $f_1=(\id _{L}\otimes h_{sep})f$ 
and $\eta _1=\pi _{f_1}(e_1)$. Then 
for any $g\in\c G$, 
we have \ \ 
$\eta _1(g)=(-f_1)\circ g(f_1)=$\newline 
$$(\id _L\otimes h)((-f)\circ (h_{sep}^{-1}\,g\, h_{sep})f)=
\eta (h_{sep}^{-1}\,g\, h_{sep}).$$ 
Therefore, $\eta _1$ is equal to the composition of the conjugation 
by $h_{sep}$  on $\c G$ (we shall denote it by $\op{Ad}\, h_{sep}$ below) and $\eta $. 
Then $h_{sep}(\c K_e)=\c K_e$ means that $\Ker\,\eta =\Ker\,\eta _1$.  
This implies the existence of an automorphism $A$ of the group $G(L)$ 
(which is automatically automorphism of the Lie algebra $L$) such that 
$\eta _1=A\eta $. 

Now let $f'=(A\otimes\id _{\c K_{sep}})f$ and $e'=(A\otimes \id _{\c K})e$. 
Then  
$\pi _{f'}(e')g=(A\otimes\id _{\c K_{sep}})((-f)\circ 
 g(f))=(A\eta )g=\eta _1(g)$. This means that 
$f'$ and $f_1$ give the same morphisms $\c G\to G(L)$ and there is  
$c\in G(L_{\c K})$ such that $f_1=c\circ f'$, that is 
a) implies b). Proceeding in the opposite direction we can deduce 
b) from a).
\end{proof}

\begin{remark}
 From the proof of the above proposition it follows 
that a choice of the lift $h_{sep}$ uniquely determines its ingredients 
$c\in L_{\c K}$ and $A\in\Aut _{Lie} L$. 
Indeed, $A$ appears as $\Ad (h_{sep}|_{\c K_e})$ (with respect to the identification 
$\c G/\Ker\,\eta =G(L)$ induced by $\eta $) and 
$c$ is recovered then as $(\id _L\otimes h_{sep})f\circ (A\otimes \id _{\c K_{sep}})(-f)$. 
This shows that the couple $(c,A)$ depends only on the restriction 
$h_{sep}|_{\c K_e}$ and we can consider the map $h_{sep}|_{\c K_e}\mapsto (c,A)$ 
from the set of all lifts of $h$ to $\c K_e$ to the set of appropriate couples $(c,A)$. 
But the knowledge of $(c,A)$ allows us to recover uniquely the element 
$(\id _{\c L}\otimes h_{sep})f$ and  
the Galois group $\Gal (\c K_e/\c K)$ acts strictly on the set of all such elements. 
Therefore, any couple $(c,A)$ appears from no more than one lift of $h$ to $\c K_e$, that is 
the map $h_{sep}|_{\c K_e}\mapsto (c,A)$ is injective. 
We will study this map in more details below, 
cf. Proposition \ref{P2.3}. 
\end{remark}

\subsection{The identification $\eta _0$} \label{S1.3}

Let $\c K=k((t))$ be a complete discrete valuation field of Laurent formal 
power series in variable $t$ with coefficients in $k\simeq \F _{p^{N_0}}$, 
$N_0\in\N $. Choose $\alpha _0\in k$ such that $\op{Tr}_{k/\F _p}\alpha _0=1$.

Let $\Z ^+(p)=\{a\in\N\ |\ (a,p)=1\}$ and $\Z ^0(p)=\Z ^+(p)\cup\{0\}$. 
Denote by $\wt{\c L}_k$ a free pro-finite Lie algebra over $k$ with the set of 
free generators $\{D_{an}\ |\ a\in\Z ^+(p), n\in\Z /N_0\}\cup\{D_0\}$.  
As earlier, denote by the same symbol $\sigma $, the $\sigma $-linear automorphism 
of $\wt{\c L}_k$ such that $\sigma :D_0\mapsto D_0$ and 
for all $a\in \Z ^+(p)$ and $n\in\Z /N_0$, $\sigma :D_{an}\mapsto D_{a,n+1}$. 
Then $\wt{\c L}^0:=\wt{\c L}_k|_{\sigma =\id }$ is a free pro-finite 
Lie $\F _p$-algebra and $\wt{\c L}_k=\wt{\c L}_k^0$. 

Let ${\c L}=\wt{\c L}^0/C_p(\wt{\c L}^0)$. 

For any $n\in\Z /N_0$, set 
$D_{0n}=\sigma ^n(\alpha _0)D_0$. 

Let $e=\sum _{a\in\Z ^0(p)}t^{-a}D_{a0}\in G({\c L}_{\c K})$ and let  
$f\in G({\c L}_{\c K_{sep}})$ be such that 
$\sigma (f)=e\circ f$. 
Then the morphism $\eta =\pi _{f}(e)$ induces the  
isomorphism of topological groups $\eta _0:\c G_{<p}:=
\c G/\c G^pC_p(\c G)\tilde\To G({\c L})$. 
\medskip 

In the remaining part of the paper we shall use 
(without additional notice) the above introduced notation $e$, $f$, $\eta $ and $\eta _0$. 
The appropriate field $\c K_e$ coincides with   
$\c K_{sep}^{\c G^pC_p(\c G)}$ and will be denoted by $\c K_{<p}$. 
\medskip 

Note that $f\in G(\c L_{\c K_{<p}})$. In particular, 
if $h_1,h_2\in\,\op{Aut}\,\c K_{sep}$ are such 
that $h_1|_{\c K}=h_2|_{\c K}$ and $(\id _{\c L}\otimes h_1)f=(\id _{\c L}\otimes h_2)f$ then 
$h_1|_{\c K_{<p}}=h_2|_{\c K_{<p}}$, cf. Remark at the end of Subsection \ref{S1.2}.  Therefore, 
the appropriate choice of the ingredients 
$c\in\c L_{\c K}$ and $A\in\op{Aut}\,\c L$ from Proposition \ref{P1.1} 
can be used to describe efficiently the lifts of automorphisms $h$ 
of $\c K$ to automorphisms $h_{<p}$ of $\c K_{<p}$. 
We shall also use below in Subsections 
\ref{S2.2} and \ref{S4.5} the following interpretation 
of this property. Suppose $\c L _1$ is an ideal in $\c L$  
and $\c K_{<p}^{G(\c L_1)}=\c K _1$. Then 
$f\,\op{mod}\,\c L_{1\c K_{<p}}$ is defined over 
$\c K_1$. In other words, 
$f\,\op{mod}\,\c L_{1\c K_{<p}}\in (\c L/\c L_1)_{\c K_1}
\subset (\c L/\c L_1)_{\c K_{<p}}$, or 
$f\in\c L_{\c K_1}+\c L_{1\c K_{<p}}$. Note that $\eta :\c G\To G(\c L)$ induces 
(via using $f\,\op{mod}\, \c L_{1\c K_{<p}}$) the identification 
$\Gal (\c K_1/\c K)\simeq G(\c L/\c L_1)$. 

\medskip 

If $h\in\op{Aut}\,\c K$ then its lifts to 
$\op{Aut}\,\c K_{<p}$ will be denoted usually by $h_{<p}$. 
  As we have already pointed out, $G(\c L)$ acts transitively 
on the set of all lifts $h_{<p}$ of a given $h$: for any $l\in G(\c L)$, 
$h_{<p}\mapsto h_{<p}*l=h_{<p}\,\eta _0^{-1}(l)$. 
\medskip 

\subsection{The ramification subgroups in $\c G_{<p}$} \label{S1.4} 
 
For $v\geqslant 0$, let $\c G_{<p}^{(v)}$ be the image of 
the ramification subgroup $\c G^{(v)}$ of $\c G$ in $\c G_{<p}$.  
This subgroup corresponds to some ideal ${\c L}^{(v)}$ of the Lie algebra 
${\c L}$ with respect to the identification $\eta _0$.  

When working with the above standard generators of $\c L_k$ we very often 
denote them by $D_{an}$, where $n\in\Z $, by having in mind  that 
they depend only on the residue of $n$ modulo $N_0$, i.e. $D_{an}=D_{a,n+N_0}$. 

For $\gamma\geqslant 0$ and $N\in\N $, introduce 
$\c F^0_{\gamma ,-N}\in{\c L}_k$ such that 
$$\c F^0_{\gamma ,-N}=\sum _{\substack {1\leqslant s<p\\
a_i, n_i }}a_1\eta (n_1,\dots, n_s)[\dots [D_{a_1 n_1},
D_{a_2 n_2}],\dots ,D_{a_s n_s}]$$

Here: 
\medskip 

--- \ $a_1p^{n_1}+a_2p^{n_2}+\dots +a_sp^{n_s}=\gamma $;
\medskip 

--- \ if $0=n_1=\dots =n_{s_1}>\dots >n_{s_{r-1}+1}=\dots =n_{s_r}\geqslant -N$ 
then $\eta (n_1,\dots ,n_s)=(s_1!\dots (s_r-s_{r-1})!)^{-1}$; 
otherwise, $\eta (n_1,\dots ,n_s)=0$.

\begin{Thm} \label{T1.2}
For any $v\geqslant 0$, there is $\wt{N}(v)$ such that 
if $N\geqslant \wt{N}(v)$ is fixed then  
the ideal ${\c L}^{(v)}$ is the minimal ideal in ${\c L}$ 
such that its extension of scalars ${\c L}_k^{(v)}$ contains all 
$\c F^0_{\gamma ,-N}$ with $\gamma\geqslant v$. 
 \end{Thm}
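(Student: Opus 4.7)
\emph{Proof plan.} The strategy is to use nilpotent Artin-Schreier theory to describe $\c L^{(v)}$ as the obstruction to ``lowering the support'' of $e=\sum_{a\in\Z^0(p)}t^{-a}D_{a0}$ below the level $t^{-v}$, through the equivalence $e\leadsto (-\sigma(x))\circ e\circ x$ for $x\in G(\c L_{\c K})$. By property b) of Subsection \ref{S1.1}, equivalent choices of $e$ produce conjugate Galois representations $\pi _f(e)$, hence the same kernels, so the ramification ideal $\c L^{(v)}$ depends only on an equivalence class of $e$.

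First I would reduce modulo a finite-codimensional ideal $I\subset \c L$, so that all constructions take place in a finite nilpotent Lie algebra; it suffices to verify the theorem in each such quotient and pass to the inverse limit. For a given $v$, the main computational step is then to construct an element $x\in G(\c L_{\c K})$ such that $e'=(-\sigma(x))\circ e\circ x$ has all its $t^{-\gamma}$-coefficients with $\gamma\geq v$ contained in the ideal $J$ generated by $\{\c F^0_{\gamma,-N}\mid \gamma\geq v\}$, for $N=\wt N(v)$ taken large enough that no shifts $n_s<-N$ are needed inside the fixed window dictated by $I$. The element $x$ is produced by an iterative Frobenius-absorption procedure: a term $t^{-a}D_{a0}$ with $a\geq v$ is absorbed using corrective terms of the form $\sigma^n(x_0)$ that convert it into shifted Frobenius conjugates $t^{-ap^n}D_{an}$; the iteration of these substitutions, combined with the Campbell-Hausdorff formula, produces nested commutators $[\dots[D_{a_1\bar n_1},D_{a_2\bar n_2}],\dots,D_{a_s\bar n_s}]$ whose multinomial weights $\eta(n_1,\dots,n_s)=(s_1!\dots (s_r-s_{r-1})!)^{-1}$ arise from the symmetric-group ordering of the $n_i$, and whose leading factor $a_1$ comes from the formal derivative appearing in the recursive step.

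The identification $\c L^{(v)}=J$ then follows from a criterion characterizing the ramification ideals: an ideal $I$ contains $\c L^{(v)}$ precisely when, modulo $I$, the element $e$ admits a normalization supported on exponents $\gamma<v$. The abelian case is exactly the statement of local class field theory (the generator $D_{a0}$ contributes to the upper ramification break at $a$, matched through the Herbrand transition for the tower $k((t^{1/p^\infty}))$), and the general case follows by induction along the descending central series of $\c L$, using that equivalent $e$ give extensions with the same upper numbering. The main obstacle is the combinatorial bookkeeping in step two: one must show that the iterative procedure terminates after finitely many steps for each fixed finite quotient, and that the emerging nested commutators, with their multinomial coefficients, reproduce $\c F^0_{\gamma,-N}$ exactly. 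This combinatorial core is the content of \cite{Ab1,Ab2,Ab3}, and the present theorem packages its output in the form needed for the subsequent sections.
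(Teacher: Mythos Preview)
The paper does not give a self-contained proof of this theorem: immediately after the statement it simply cites \cite{Ab3}, where the result is proved in the contravariant setting with generators $\c F_{\gamma,-N}$ differing from $\c F^0_{\gamma,-N}$ by a factor $(-1)^{s-1}$, and then observes that passing from the contravariant to the covariant theory replaces each commutator $[\dots[D_1,D_2],\dots,D_s]$ by $[D_s,\dots,[D_2,D_1]\dots]=(-1)^{s-1}[\dots[D_1,D_2],\dots,D_s]$, which accounts exactly for the sign. That one-line translation is the entire content the paper contributes here.

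Your proposal is not wrong, but it is doing something different: you are sketching, in outline, the actual strategy of \cite{Ab1,Ab2,Ab3} (equivalence $e\mapsto(-\sigma x)\circ e\circ x$, reduction to finite quotients, iterative Frobenius absorption, induction along the central series). That is a reasonable summary of those papers and you correctly flag at the end that the combinatorial core lives there. What you omit is precisely the only thing the present paper adds, namely the contravariant-to-covariant dictionary and the resulting sign; since Subsection~\ref{S1.1} sets up both versions and the rest of the paper works covariantly, this translation is the point worth recording. If you want your write-up to match the paper's role for this theorem, replace the sketch by a citation to \cite{Ab3} together with the commutator-reversal identity above.
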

 
 The appropriate theorem in the contravariant setting was obtained 
 in \cite{Ab1} (or in a more general form in the context of 
groups of period $p^M$ in \cite{Ab3}) 
 and uses 
 the elements $\c F_{\gamma ,-N}$ given by the same formula but 
 with the factor $(-1)^{s-1}$. Indeed, when switching to the 
covariant setting all commutators of the form  
 $[\dots [D_{a_1n_1},D_{a_2n_2}],\dots ,D_{a_sn_s}]$ should be replaced by 
 $[D_{a_sn_s},\dots ,[D_{a_2n_2},D_{a_1n_1}]\dots ]=
 (-1)^{s-1}[\dots [D_{a_1n_1},D_{a_2n_2}],\dots ,D_{a_sn_s}]$. 
 \medskip 



\section{The groups $\wt{\c G} _{h}$ and $\c G_{h}$} \label{S2}

\subsection{The automorphism $h$} \label{S2.1} Let $c_0\in p\N $. 
Denote by  $h$ a continuous automorphism   
of $\c K$ such that $h |_k=\id $ and 
$$h(t)=t\left (1+\sum _{i\geqslant 0}\alpha _i(h)t^{c_0+pi}\right )\, ,$$ 
where all $\alpha _i(h)\in k$ and $\alpha _0(h)\ne 0$. 
This automorphism will be fixed in the remaining part of the paper. 

Let $E(X)=\exp\left (\sum _{i\geqslant 0}X^{p^i}/{p^i}\right )\in\Z _p[[X]]$ 
be the Artin-Hasse exponential. 

\begin{Prop} \label{P2.1}  \ \ 

 {\rm a)} There is $\omega _h\in t^{c_0/p}O^*_{\c K}$ such that 
$h(t)=tE(\omega _h^p)$; 
 
{\rm b)} For any $n\geqslant 0$, $h^n(t)\equiv 
tE(n\omega _h^p)\,\op{mod}\,t^{1+pc_0}$. 
\end{Prop}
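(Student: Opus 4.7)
The plan is to produce $\varepsilon $ via the truncated logarithm and then leverage a standard formal identity between $\wt{\exp}$ and its inverse. Set $u:=h(t)/t-1=\sum _{i\geqslant 0}\alpha _i(h)t^{c_0+pi}\in t^{c_0}O_{\c K}$ and introduce
$$\wt{\log}(1+X):=\sum _{j=1}^{p-1}(-1)^{j+1}X^j/j\in \Z _{(p)}[X].$$
The key point is that $c_0\in p\N $, so every monomial of $u$ has $t$-degree divisible by $p$; the same is then true of $u^j$ and hence of $\wt{\log}(1+u)$. Since $k\simeq \F _{p^{N_0}}$ is perfect, any element of $k[[t^p]]$ is a unique $p$-th power in $k[[t]]$ via $\sum _j c_j t^{pj}=(\sum _j c_j^{1/p}t^j)^p$. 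Thus $\wt{\log}(1+u)=\varepsilon ^p$ for a unique $\varepsilon \in O_{\c K}$, and matching leading coefficients using $\alpha _0(h)\ne 0$ forces $\varepsilon \in t^{c_0/p}O^*_{\c K}$. The assertion of (a) now reduces to the formal identity
$$\wt{\exp}(\wt{\log}(1+X))\equiv 1+X\pmod{X^p}\quad \text{in }\Z _{(p)}[X],$$
which holds because in $\Q [X]/(X^p)$ one may replace the truncations by the full series $\exp,\log $ and apply $\exp\circ \log =\id $; $p$-integrality is automatic since $\wt{\exp},\wt{\log}\in \Z _{(p)}[X]$. Substituting $X=u\in t^{c_0}O_{\c K}$ and multiplying by $t$ yields $h(t)\equiv t\wt{\exp}(\varepsilon ^p)\pmod{t^{1+pc_0}}$.

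For (b) I will induct on $n$, the case $n=0$ being trivial. A direct computation shows that for any $x\in t^aO_{\c K}$ with $a\geqslant 1$, one has $h(x)-x\in t^{a+c_0}O_{\c K}$, hence $h(\varepsilon )-\varepsilon \in t^{c_0/p+c_0}O_{\c K}$. Taking $p$-th powers in characteristic $p$ (the freshman's dream),
$$h(\varepsilon )^p-\varepsilon ^p=(h(\varepsilon )-\varepsilon )^p\in t^{(p+1)c_0}O_{\c K},$$
so that $\wt{\exp}(nh(\varepsilon )^p)\equiv \wt{\exp}(n\varepsilon ^p)\pmod{t^{(p+1)c_0}}$ since $\wt{\exp}$ is a polynomial. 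Applying $h$ to the induction hypothesis and using that $h$ preserves the valuation gives
$$h^{n+1}(t)\equiv h(t)\cdot \wt{\exp}(nh(\varepsilon )^p)\equiv t\wt{\exp}(\varepsilon ^p)\wt{\exp}(n\varepsilon ^p)\pmod{t^{1+pc_0}},$$
where the second congruence applies part (a). Finally I invoke the formal identity $\wt{\exp}(X)\wt{\exp}(nX)\equiv \wt{\exp}((n+1)X)\pmod{X^p}$ (the truncation at level $p$ of $\exp(X)\exp(nX)=\exp((n+1)X)$ for commuting inputs) with $X=\varepsilon ^p\in t^{c_0}O_{\c K}$, completing the induction.

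The main obstacle is the extraction of the $p$-th root in (a): it is precisely the divisibility $p\mid c_0$ that forces both $u$ and $\wt{\log}(1+u)$ to lie in $k[[t^p]]$, where perfectness of $k$ permits solving $\varepsilon ^p=\wt{\log}(1+u)$ uniquely. Without this numerical hypothesis the truncated logarithm would produce stray terms in degrees not divisible by $p$, obstructing the whole strategy. Beyond that point, both parts amount to careful bookkeeping of $t$-adic orders combined with the standard formal identities relating $\wt{\exp}$, $\wt{\log}$, and $(a+b)^p=a^p+b^p$ in characteristic $p$.
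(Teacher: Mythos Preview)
Your proof is correct and follows essentially the same approach as the paper. The paper dismisses (a) as ``obvious'' and for (b) simply notes that $h(t)\equiv t\bmod t^{c_0}$ forces $h(\varepsilon^p)\equiv \varepsilon^p\bmod t^{pc_0}$ and then appeals to induction; your argument supplies exactly the details behind these assertions, constructing $\varepsilon$ explicitly via $\wt{\log}$ and using the freshman's dream to get the (slightly stronger) congruence $h(\varepsilon^p)\equiv\varepsilon^p\bmod t^{(p+1)c_0}$.
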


\begin{proof} 
 For part a), $\omega _h$ appears as a unique element from $tk[[t]]$ such that 
 $E(\omega _h)=1+\sum _{j\geqslant 0}\sigma ^{-1}(\alpha _j(h))t^{c_0/p+j}$. 
 (Use that $x\mapsto E(x)-1$ is bijective on $tk[[t]]$.) 
 For part b), note that 
$h(t)\equiv t\,\op{mod}\,t^{c_0}$ implies that 
$h(t^{c_0+pi})\equiv t^{c_0+pi}\,\op{mod}\,t^{pc_0}$ and, therefore, 
$h(\omega _h^p)\equiv \omega _h^p\,\op{mod}\,t^{pc_0}$. 
Now apply induction on $n$.  If  
our proposition is proved for $n\geqslant 1$ then 
$$h^{n+1}(t)\equiv h(t)h(E(n\omega _h^p))\equiv 
tE(\omega _h^p)E(n\omega _h^p)\equiv tE((n+1)\omega _h^p)\,\op{mod}\,t^{pc_0+1}\, $$
(use that $E(X+Y)\equiv E(X)E(Y)\,\op{mod}\,\deg p$). 
\end{proof}

\begin{remark}
 In all applications below the knowledge of the automorphism $h$ will be essential 
 only modulo $t^{1+pc_0}$ and, therefore, in the above 
proposition we can use instead of  
 $E(X)$ the truncated exponential 
 $\wt{\exp}(X)=1+X+\dots +X^{p-1}/(p-1)!$\,. 
\end{remark}

\subsection{Operators $\c R$ and $\c S$} \label{S2.2} 
 
Suppose $\fr M$ is a profinite $\F _p$-module.
Define the continuous $\F _p$-linear operators 
$\c R,\c S:\fr M_{\c K}\To \fr M_{\c K}$ as follows. 

Suppose $\alpha\in\fr M_k$. 

If $n>0$ then set $\c R(t^n\alpha )=0$ and $\c S(t^n\alpha )=
-\sum _{i\geqslant 0}\sigma ^i(t^n\alpha )$.

For $n=0$, set $\c R(\alpha )=\alpha _0\op{Tr}_{k/\F _p}\alpha $, 
\ \ $\c S(\alpha )=\sum _{0\leqslant j<i<N_0}
(\sigma ^j\alpha _0)\sigma ^i\alpha $.

If $n=-n_1p^m$ with $\op{gcd}(n_1,p)=1$ then set 
$\c R(t^n\alpha )=t^{-n_1}\sigma ^{-m}\alpha $ and 
$\c S(t^n\alpha )=\sum _{1\leqslant i\leqslant m}\sigma ^{-i}(t^n\alpha )$. 

The proof of the following lemma is straightforward.

\begin{Lem} \label{L2.2} 
For any $b\in\fr M_{\c K}$,  
\medskip 

{\rm a)}\ $b=\c R(b)+(\sigma -\id _{\fr M_{\c K}})\c S(b)$;
\medskip 

{\rm b)}\ if $b=b_1+\sigma b_2-b_2$, where 
$b_1\in\sum _{a\in\Z ^+(p)}t^{-a}\fr M_k+\alpha _0\fr M$ 
and $b_2\in\fr M_{\c K}$ then $b_1=\c R(b)$ and $b_2-\c S(b)\in\fr M$. 
\end{Lem}

\begin{remark} 
a) The definition of the above operators $\c R$ and $\c S$ in the cases 
$n>0$ and $n<0$ is self-explanatory. In 
the case $n=0$ we have the following picture behind. 
 For $\alpha\in\c L_k$ and $0\leqslant i<N_0$, set 
$\c R_i(\alpha )=\alpha _0\sigma ^{-i}\alpha $ and $\c S_i(\alpha )=
\sum _{0\leqslant j<i}\sigma ^j(\c R_i(\alpha ))$. Then 
$$\alpha =\sum _{0\leqslant i<N_0}(\sigma ^i\alpha _0)\alpha =
\sum _{0\leqslant i<N_0}\sigma ^i\c R_i(\alpha )=
\sum _{0\leqslant i<N_0}\left ((\sigma -\id )
\c S_i+\c R_i\right )(\alpha )$$ 
$$\c R=\sum _{0\leqslant i<N_0}\c R_i\,,\ \ \ \ 
\c S=\sum _{0\leqslant i<N_0}\c S_i\,,$$  
$$\c S(\alpha )=\sum _{0\leqslant j<i<N_0}
\sigma ^j(\alpha _0\sigma ^{-i}\alpha )
=\sum _{0\leqslant j<i_1<N_0}(\sigma ^j\alpha _0)
\sigma ^{i_1}\alpha \,,$$ 
where $i_1=j-i+N_0$. 
Note that there are many other ways to define $\c S$ in 
the case $n=0$.  
\medskip 

b) A typical situation where we refer to the above lemma appears as follows: 
suppose $\fr{N}\subset\frak{M}$ is an 
$\F _p$-submodule and 
$$b=\sum _{a\in\Z ^+(p)}t^{-a}b_a+\alpha _0b_0+\sigma c-c\, ,$$
with all $b_a\in\fr{M}_{k}$, $b_0\in\fr{M}$ and $c\in\fr{M}_{\c K}$; if $b\in\fr{N}_{\c K}$ then 
all $b_a\in \fr{N}_k$, $b_0\in\fr{N}$ and $c\in\fr{M}+\fr{N}_{\c K}$. 
\end{remark}

\subsection{Specification of $h_{<p}$} \label{S2.3} 
 
We are going to specify a lift $h_{<p}$ of $h$ to $\c K_{<p}$ by 
using formalism of nilpotent Artin-Schreier theory.  
Recall that for any lift $h_{<p}$ of $h$, we have  
a unique $c\in \c L_{\c K}$ and $A=\Ad\, h_{<p}\in\Aut\,\c L$ such that 
$(\id _{\c L}\otimes h_{<p})(f)=c\circ (A\otimes\id _{\c K_{<p}})f$. 
The appropriate map $h_{<p}\mapsto (c,A)$ is injective, cf. Subsection \ref{S1.2}. 
The following proposition describes the image of this map. 

\begin{Prop} \label{P2.3} 
 The correspondence $\Pi :h_{<p}\mapsto (c,A)$ induces a bijection 
of the set of all lifts $h_{<p}$ of $h$ and the set of pairs 
$(c,A)\in\c L_{\c K}\times\Aut\,\c L$
such that 
\begin{equation} \label{E2.1}
 (\id _{\c L}\otimes h)e\circ c=\sigma c\circ (A\otimes\id _{\c K})e\,.
\end{equation}
\end{Prop}

\begin{proof} If $\Pi (h_{<p})=(c,A)$ then 
$$(\id _{\c L}\otimes h)e\circ (\id _{\c L}\otimes h_{<p})f
=(\id _{\c L}\otimes h_{<p})(e\circ f)=
(\id _{\c L}\otimes h_{<p})\sigma f=$$
$$\sigma c\circ (A\otimes\id _{\c K_{<p}})\sigma f=
\sigma c\circ (A\otimes\id _{\c K})e\circ (A\otimes\id _{\c K_{<p}})f$$
$$=\sigma c\circ (A\otimes\id _{\c K})e\circ (-c)\circ (\id _{\c L}\otimes h_{<p})f\, .$$
This proves that $(c,A)$ satisfies identity \eqref{E2.1}. 

Let $l'\in\c L$. Then $\eta _0^{-1}(l')\in\Gal (\c K_{<p}/\c K)$ and 
$h_{<p}\,\eta _0^{-1}(l')$ is again a lift of $h$ to $\c K_{<p}$. 
Therefore, we have a transitive action 
$h_{<p}\mapsto h_{<p}*l':=h_{<p}\eta _0^{-1}(l')$ of 
$G(\c L)$ on the set of all lifts $h_{<p}$. 

At the same time, if $(c,A)$ satisfies \eqref{E2.1} then the new couple 
\linebreak 
$(c,A)*l':=(c\circ (l'\otimes 1),(\Ad l')A)$ is again a solution of $\eqref{E2.1}$. 
Indeed, 
$$(\id _{\c L}\otimes h)e\circ c\circ (l'\otimes 1)=
(\sigma c)\circ (A\otimes \id _{\c K})e\circ (l'\otimes 1)$$
$$=\sigma (c\circ (l'\otimes 1))\circ (-l'\otimes 1)
\circ (A\otimes\id _{\c K})e\circ (l'\otimes 1)\, ,$$
and  
$(-l'\otimes 1)\circ (A\otimes\id _{\c K})\circ (l'\otimes 1)$ acts on $\c L_{\c K}$ as 
$(\Ad l')A\otimes\id _{\c K}$, i.e. $\Ad (l'\otimes 1):\c L_{\c K}\To \c L_{\c K}$ is 
$\c K$-linear. (Indeed, one of most known properties of Campbell-Hausdorff formula, cf. 
\cite{Bou}, Ch.II, Section 6.5, gives that 
$$(-l'\otimes 1)\circ l\circ (l'\otimes 1)=
\sum _{0\leqslant i<p}
[\dots [l,\underset{i\text{  times}}
{\underbrace{l'\otimes 1],\dots ,l'\otimes 1}}]/i!$$ 
depends linearly on $l\in\c L_{\c K}$. ) 

 This defines the action $(c,A)\mapsto (c,A)*l'$ of $G(\c L)$ 
on all solutions $(c,A)$ of \eqref{E2.1}. Verify that 
the map $\Pi $ is compatible with above defined $G(\c L)$-actions. Indeed, 
if $\Pi (h_{<p})=(c,A)$ then $h_{<p}*l'$ sends $f$ to 
$$h_{<p}(f\circ (l'\otimes 1))=c\circ (A\otimes\id _{\c K_{<p}})f\circ (l'\otimes 1)
=$$
$$(c\circ (l'\otimes 1))\circ (-l'\otimes 1)
\circ (A\otimes \id _{K_{<p}})f\circ (l'\otimes 1)$$ 
and therefore, $\Pi (h_{<p}*l')=(c,A)*l'$. 
So, our proposition will be proved if we show 
that $G(\c L)$ acts transitively on the set of all solutions $(c,A)$ of \eqref{E2.1}. 

Suppose $(c,A)$ and $(c',A')$ are solutions of \eqref{E2.1}. Then the existence 
of $l'\in G(\c L)$ such that $(c',A')=(c,A)*l'$ will be implied by the following lemma.

\begin{Lem} \label{L2.4}
 For any $1\leqslant s\leqslant p$, there is $l'_s\in G(\c L)$ 
such that if $(c'_s, A'_s)=(c,A)*l'_s$ 
then $c_s\equiv c'\,\op{mod}\,C_s(\c L_{\c K})$ and $A_s\equiv A'\,\op{mod}\,C_s(\c L)$. 
\end{Lem} 

\begin{proof} [Proof of lemma] Use induction on $s$. 

If $s=1$ there is nothing to prove. 

Suppose lemma is proved for some $1\leqslant s<p$. 

Let $c'=c'_s+\delta $ and $A'=A'_s+\c A$, where $\delta \in C_s(\c L_{\c K})$ 
and $\c A\in\Hom _{\F _p\op{-mod}}(\c L, C_s(\c L))$.  Then 
we have modulo $C_{s+1}(\c L_{\c K})$: 
$$(\id _{\c L}\otimes h)e\circ c'\equiv (\id _{\c L}\otimes h)e\circ c'_s+\delta \, ,$$
$$(\sigma c')\circ (A'\otimes\id _{\c K})e\equiv (\sigma c'_s)\circ (A'_s\otimes\id _{\c K})e+
\sigma (\delta )+(\c A\otimes\id _{\c K})e\, .$$

Because  $(c'_s,A'_s)$ and $(c',A')$ are solutions of \eqref{E2.1} we obtain 
$$\sigma\delta -\delta + \sum _{a\in\Z ^+(p)}
t^{-a}\c A_k(D_{a0})+\alpha _0\c A(D_0)\in C_{s+1}(\c L_{\c K})\, ,$$
where $\c A_k=\c A\otimes k\in\Hom _{k-\op{mod}}(\c L_k, C_{s}(\c L_k))$. 
Now Lemma \ref{L2.2}b) (cf. also remark b) after that lemma) 
implies that $\delta \equiv \delta _0\,\op{mod}\,C_{s+1}(\c L_{\c K})$, 
where $\delta _0\in C_s(\c L)\otimes 1$, 
all $\c A_k(D_{a0})\in C_{s+1}(\c L_k)$ and $\c A(D_0)\in C_{s+1}(\c L)$. 
Therefore, modulo $C_{s+1}(\c L_k)$ the automorphisms 
$A'$ and $A'_s$ coincide on generators of $\c L_k$ 
(use that $\c A_k(D_{an})=\sigma ^n\c A_k(D_{a0})$ for all $n\in\Z /N_0$) and 
$A'\equiv A_s'\,\op{mod}\,C_{s+1}(\c L)$. 

So, for 
$(c,A)*(l'_s\circ \delta )=
(c'_s,A'_s)*\delta =(c'_{s+1}, A'_{s+1})$, we have that 
$$c'_{s+1}=c'_s\circ\delta\equiv c'_s+\delta \equiv c'\,\op{mod}\,C_{s+1}(\c L_{\c K})$$ 
and 
$$A'_{s+1}=(\Ad\, \delta )A'_s\equiv (\Ad\, \delta )A'\equiv A'\,\op{mod}\,C_{s+1}(\c L)\,.$$
The lemma and Proposition \ref{P2.3} are completely proved.  
\end{proof} 
\end{proof}

\begin{remark}
 Suppose  $(c_1,A_1)$ and $(c_2,A_2)$ satisfy the identity \eqref{E2.1} and 
 $c_1\equiv c_2\,\op{mod}\,C_s(\c L_{\c K})$. Then $(A_1\otimes\id _{\c K})e\equiv 
 (A_2\otimes\id _{\c K})e\,\op{mod}\,C_s(\c L_{\c K})$ and this implies that 
 $A_1\equiv A_2\,\op{mod}\,C_s(\c L)$. In particular, 
 if $\Pi (h_{<p})=(c,A)$ then the restriction $h_{<s}$ of $h_{<p}$ 
 to $\c K_{<p}^{C_s(\c L)}$ is uniquely 
determined by the residue $c\,\op{mod}\,C_s(\c L_{\c K})$. 
 Now from the proof of the above proposition it follows that all lifts of a given 
$h_{<s}$ to 
 automorphisms $h_{<s+1}$ of $\c K_{<p}^{C_{s+1}(\c L)}$ 
are uniquely determined by the residues 
 $(c+\delta, A)\,\op{mod}\,C_{s+1}(\c L_{\c K})$, 
 where $\delta\in C_s(\c L)$. 
 \end{remark}
\medskip 

Using the above proposition and operators $\c R$ and $\c S$ from Subsection \ref{S2.2} 
we can specify a unique choice $h^0_{<p}$ in the set of all lifts of $h$ 
by specifying a unique solution $(c^0,A^0)$ of \eqref{E2.1} as follows. 

Suppose $1\leqslant s<p$ and we have chosen 
$(c_s,A_s)\in \c L_{\c K}\times \Aut \c L$ such that 
the identity \eqref{E2.1} holds modulo $C_s(\c L_{\c K})$. If $s=1$ we just choose 
$c_1=0$ and $A_1=\id _{\c L}$. Then we can find the solution 
$(c_{s+1},A_{s+1})\in\c L_{\c K}\times\Aut\,\c L$  
of \eqref{E2.1} modulo $C_{s+1}(\c L_{\c K})$ 
by setting $c_{s+1}=c_s+X_s$ and $A_{s+1}=A_s+B_s$ where $X_s\in C_s(\c L_{\c K})$ and 
$B_s\in\Hom _{\F _p-\op{mod}}(\c L,C_s(\c L))$ must satisfy the relation 

\begin{equation} \label{E2.2}\sigma X_s-X_s+\sum _{a\in\Z ^0(p)}t^{-a}B_s(D_{a0})\equiv 
 \end{equation}

$$(\id _{\c L}\otimes h)e\circ c_s-
\sigma c_s\circ (A_s\otimes\id _{\c K})e\,\op{mod}\,C_{s+1}(\c L_{\c K})\,.$$

 By Lemma \ref{L2.2}b) the recurrence relation \eqref{E2.2} uniquely determines  
the elements $B_s(D_{a0})\,\op{mod}\,C_{s+1}(\c L_k)$ but the element 
$X_s$ is determined only up to elements of $C_s(\c L)\,\op{mod}\,C_{s+1}(\c L)$.  
(This will affect the right-hand 
side of \eqref{E2.2} at the next $(s+1)$-th step and so on.) 
Note that the knowledge of the elements 
$B_s(D_{a0})\,\op{mod}\,C_{s+1}(\c L_k)$ determines uniquely 
the automorphism $A_{s+1}$ modulo $C_{s+1}(\c L)$ because for 
all $n\in\Z /N_0$, $A_{s+1}(D_{an})=\sigma ^nA_{s+1}(D_{a0})$. By Proposition \ref{P2.3} 
all solutions $X_s$ correspond to 
different extensions of a given automorphism of $\c K_{<p}^{C_s(\c L)}$ 
to an automorphism of $\c K_{<p}^{C_{s+1}(\c L)}$ (cf. also the remark after the proof of 
that proposition).  
In particular, we can uniquely specify the lift $h^0_{<p}$ by 
specifying $(\id _{\c L}\otimes h^0_{<p})f$ if 
we take at each $s$-th step the solutions of \eqref{E2.2} 
in the form 
$\sum _{a\in\Z ^0(p)}t^{-a}B_s(D_{a0})
=\c R(\c B_s)$ and $X_s=\c S(\c B_s)$, 
where $\c B_s$ is the RHS in \eqref{E2.2}. As a result, the pair $(c^0,A^0):=(c_p,A_p)$ 
satisfies the identity \eqref{E2.1} and defines the lift $h^0_{<p}$. 
\medskip

\begin{remark}  It is not easy to control 
the lifts $h_{<p}$ because condition \eqref{E2.2} contains  
highly non-trivial the Campbell-Hausdorff operation $\circ $. In Section \ref{S3} 
we resolve this problem by introducing the procedure of linearization. 
\end{remark}
\medskip

\subsection{The group $\wt{\c G}_{h}$} \label{S2.4} 

Denote by $\wt{\c G}_{h}$ the group of 
all lifts $\tilde h_{<p}\in\op{Aut}\,\c K_{<p}$ of the elements $\tilde h$  
of the closed subgroup 
in $\Aut\,\c K$ generated by $h$.

Use the identification $\eta _0$ from Subsection \ref{S1.3} to 
obtain a natural 
short exact sequence of profinite $p$-groups 
\begin{equation} \label{E2.3}
1\To G(\c L)\To \wt{\c G}_{h}\To \langle h \rangle\To 1 
\end{equation} 
For any $s\geqslant 2$, $C_s(\wt{\c G}_{h})$ is a subgroup in 
$G(\c L)$ and, therefore, $\c L_{h}(s):=C_s(\wt{\c G}_{h})$ 
is a Lie subalgebra of $\c L$. Set $\c L_{h}(1)=\c L$. Note that  
for any $s_1,s_2\geqslant 1$, 
we have 
$[\c L_{h}(s_1),\c L_{h}(s_2)]\subset \c L_{h}(s_1+s_2)$. 
\medskip 

Define the weight filtration $\c L(s)$, $s\in\N $, in $\c L$ by setting 
 $\w (D_{an})=s$ 
if $(s-1)c_0\leqslant a<sc_0$. With this notation 
$\c L(s)_k$ is generated  over $k$ 
by all $[\dots [D_{a_1n_1},D_{a_2n_2}],\dots ,D_{a_rn_r}]$ such that 
$\sum _{i}\w (D_{a_in_i})\geqslant s$.  
For any $s_1,s_2\geqslant 1$, we also have that 
$[\c L(s_1),\c L(s_2)]\subset\c L(s_1+s_2)$.

\begin{Thm} \label{T2.5} 
 For all $s\in\N $, $\c L_{h}(s)=\c L(s)$. 
\end{Thm}

\begin{proof} Let $h^0_{<p}$ be the lift constructed at the end of Subsection \ref{S2.3}. 
Then $h^0_{<p}\in\wt{\c G}_h$ is a preimage of $h$  
in short exact sequence \eqref{E2.3}. 

Let $\c L^{lin}=(\sum _{a,n}kD_{an})|_{\sigma =\id }$ be 
\lq\lq the subspace of linear terms\rq\rq\  of $\c L$. 
We have the following properties:
\medskip 

$\bullet $\ $\c L(s+1)=\c L^{lin}\cap \c L(s+1)
+\c L(s+1)\cap C_2(\c L)$; 
\medskip 

$\bullet $\ $\c L(s+1)\cap C_2(\c L)=\sum _{s_1+s_2=s+1} \left [\c L(s_1),\c L(s_2)\right ]$;
\medskip 

$\bullet $\ $\c L_h(s+1)$ is the ideal in $\c L$ generated by  
$[\c L_h(s),\c L]$ and the elements of the form 
$(\op{Ad}\,h^0_{<p})l\circ (-l)$, where $l\in \c L_h(s)$.
\medskip  
 
Let $(\op{Ad}\,h^0_{<p})D_0=\wt{D}_0$ and for all $a\in\Z ^+(p)$,  
$(\op{Ad}\,h^0_{<p})D_{a0}=\wt{D}_{a0}$.

\begin{Lem} \label{L2.6} 
 We have: 
\medskip 

{\rm a)}\ $\wt{D}_0\equiv D_0\,\op{mod}\,(\c L(3)+\c L(2)\cap C_2(\c L))$;
\medskip  
 
{\rm b)}\ if $a\in\Z ^+(p)$ and $\op{wt}(D_{an})=s$ then 
$$\wt{D}_{a0}\equiv D_{a0}-\sum _{i\geqslant 0}\alpha _i(h)aD_{a+c_0+pi,0}
\,\op{mod}\,(\c L(s+2)_k+\c L(s+1)_k\cap C_2(\c L_k))\,,$$
where $\alpha _i(h)\in k$ are such that 
$h(t)= t(1+\sum _{i\geqslant 0}\alpha _i(h)t^{c_0+pi})$. 
\end{Lem} 

We prove this Lemma below after finishing the proof of Theorem \ref{T2.5}. Clearly, 
Lemma \ref{L2.6} has the following corollaries: 
\medskip 

(c1)\ {\it if $l\in \c L(s)$ then $(\op{Ad}\,h^0_{<p})l\circ (-l)\in \c L(s+1)$};
\medskip 

(c2)\ {\it if $l\in \c L^{lin}\cap\c L(s+1)$ then there is an 
$l'\in \c L^{lin}\cap\c L(s)$ such that} 
$\op{Ad}\,h^0_{<p}(l')\circ (-l')\equiv l\,\op{mod}\,
\c L(s+1)\cap C_2(\c L)$ (use that $\alpha _0(h)\ne 0$). 
\medskip 

Prove theorem by induction on $s\geqslant 1$.  

Clearly, $\c L_h(1)=\c L(1)$. 

Suppose $s_0\geqslant 1$ and for $1\leqslant s\leqslant s_0$, 
$\c L_h(s)=\c L(s)$. 

Then $[\c L_h(s_0),\c L]=[\c L(s_0), \c L(1)]\subset \c L(s_0+1)$ 
and applying (c1) we obtain that $\c L_h(s_0+1)\subset \c L(s_0+1)$. 

In the opposite direction, note that by inductive assumption, 

$$\c L(s_0+1)\cap C_2(\c L)=\sum _{s_1+s_2=s_0+1}
\left [\c L_h(s_1),\c L_h(s_2)\right ]\subset \c L_h(s_0+1)$$
and then from (c2) we obtain that $\c L^{lin}\cap\c L(s_0+1)
\subset \c L_h(s_0+1)$. So, $\c L(s_0+1)\subset\c L_h(s_0+1)$ and  
Theorem \ref{T2.5} is completely proved. 
\end{proof}

\begin{proof} [Proof of Lemma \ref{L2.6}] 

Let 
$$\c N=\sum_{s\geqslant 1}t^{-c_0s}\c L(s)_{\m },$$
where $\m $ is the maximal ideal of the valuation ring $O_{\c K}$ of $\c K$. 
Clearly, $\c N$ has the structure of Lie algebra over $\F _p$.

Let 
$$\tilde e:=(\op{Ad}\,h^0_{<p} \otimes\id _{\c K})e
=\sum _{a\in\Z ^+(p)}t^{-a}
\wt{D}_{a0}+\alpha _{0}\wt{D}_{0}\, .$$
Then recovering $\tilde e$ from the following relation 
\begin{equation} \label{E2.4} (\id _{\c L}\otimes h)e\circ c^0=(\sigma c^0)\circ \tilde{e}\,,
\end{equation}
 where $c^0\in G(\c L_{\c K})$,  is a part of the procedure of 
specifying of the lift $h^0_{<p}$ described at the end of Subsection 
\ref{S2.3}, i.e. $\tilde e=(A^0\otimes\id _{\c K})e$.

Now note that $e\in\c N$ and the operators $\c R$ and $\c S$ 
map $\c N$ to itself. Therefore, when following the procedure 
of specifying $h^0_{<p}$ at each step we obtain  that 
$\c B_s, \c R(\c B_s), \c S(\c B_s)\in\c N$ and, therefore, 
$\tilde e, c^0, \sigma c^0\in\c N$. 

For any $i\geqslant 0$, introduce the ideals $\c N(i):=t^{c_0i}\c N$ of $\c N$. 
Note that for all $i\geqslant 0$, 
the operators $\c R$ and $\c S$ map $\c N(i)$ to itself.

Consider the following properties: 
\medskip 

a)\  $(\id_{\c L}\otimes h)e=e+e_1\,\op{mod}\,\c N(2)$, 
where $e_1=e_1^++e_1^-\in\c N(1)$  
with 
$$e_1^-=-\sum _{\substack{i\geqslant 0\\a\in\Z ^+(p)}}t^{-a}a 
\alpha _i(h)D_{a+c_0+pi,0},\ \ e_1^+=-\sum _{\substack{i\geqslant 0\\0<a<c_0+pi}}
a\alpha _i(h)t^{-a+c_0+pi}D_{a0}\, $$
(note that $e_1^+\in\c L_{\m }$ and, therefore, $\c R(e_1^+)=0$);
\medskip 

b)\   the congruence $(\id _{\c L}\otimes h)e\equiv e\,\op{mod}\,\c N(1)$ implies 
that $\tilde{e}\equiv e\,\op{mod}\,\c N(1)$ and 
$c^0, \sigma c^0\in \c N(1)$: indeed, in the procedure of 
specifying of $h^0_{<p}$ we have for all $s$, that $c_s,\sigma c_s\in\c N(1)$ and 
$(A_s\otimes\id _{\c K})e\equiv e\,\op{mod}\,\c N(1)$; 
\medskip 

c)\ \ \  
 $\tilde e=(-\sigma c^0)\circ (\id _{\c L}\otimes h)e\circ c^0
\equiv (c^0-\sigma c^0)+e+e_1\,\op{mod}\,
\c N(2)+t^{c_0}\wt{\c N}^{(2)},$  
where 
$\wt{\c N}^{(2)}:=\sum _{s\geqslant 2}t^{-sc_0}(\c L(s)\cap C_2(\c L))_{\m }$ 
(use that $[\c N(1),\c N(1)]\subset\c N(2)$ and 
$[\c N(1),\c N]\subset t^{c_0}\wt{\c N}^{(2)}$);
\medskip 

d)\  
$\c R(\c N(2)+t^{c_0}\wt{\c N}^{(2)})\subset \c N(2)+t^{c_0}\wt{\c N}^{(2)}$, 
$\c R(\tilde e-e-e_1^-)=\tilde e-e-e_1^-$, 
$\c R(c^0-\sigma c^0+e_1^+)=0$ and, therefore, c) implies that 
$$\tilde e\equiv e+e_1^-\,\op{mod}\,\c N(2)+t^{c_0}\wt{\c N}^{(2)}$$  
or, more explicitly,   
$$\tilde{e}\equiv {\hskip-6pt}
\sum _{a\in\Z ^+(p)}t^{-a}{\hskip-4pt}\left (D_{a0}-
a\sum _{i\geqslant 0}\alpha _i(h)D_{a+c_0+pi,0}\right )+\alpha _0D_0\,
\op{mod}\,\c N(2)+t^{c_0}\wt{\c N}^{(2)}\,.$$

It remains to prove that this congruence is equivalent to the statement of our lemma.
Note that any element $l\in\c L_{\c K}$ can be uniquely presented as 
$l=\sum _{b\in\Z }t^bl_b$, where all $l_b\in\c L_k$ and $l_b\to 0$ if $b\to -\infty $. 

Suppose $s\geqslant 1$ and $-(s-1)c_0\geqslant b>-sc_0$.

Then it follows directly from definitions that: 
\medskip 

--- \ if $l\in\c N$ then $l_b\in\c L(s)_k$;
\medskip 

---\ if $l\in\c N(2)$ then $l_b\in\c L(s+2)_k$;
\medskip 

--- \ if $l\in t^{c_0}\wt{\c N}^{(2)}$ then $l_b\in \c L(s+1)_k\cap C_2(\c L_k)$. 
\medskip 

It remains to compare the coefficients in the last congruence for $\tilde e$. 
\end{proof}
\medskip

 \subsection{The group $\c G_{h}$} \label{S2.5} 

Let $\c G_{h}=\wt{\c G}_{h}/\wt{\c G}^{p}_{h}C_p(\wt{\c G}_{h})$. 

\begin{Prop} \label{P2.7} 
Exact sequence \eqref{E2.3} induces the following 
exact sequence of $p$-groups 
\begin{equation} \label{E2.5} 
1\To G(\c L)/G(\c L(p))\To \c G_{h}\To \langle h\rangle \,\op{mod}\,
\langle h^{p}\rangle \To 1
\end{equation}
 \end{Prop}
 
 \begin{proof}  Set 

 $$\c M:=\c N+\c L(p)_{\c K}=\sum _{1\leqslant s<p}
 t^{-sc_0}\c L(s)_{\m }+\c L(p)_{\c K}$$

 $$\c M_{<p}:=\sum _{1\leqslant s<p}
 t^{-sc_0}\c L(s)_{\m _{<p}}+\c L(p)_{\c K_{<p}}$$  
 where $\m _{<p}$  is the maximal ideal 
of the valuation ring of $\c K_{<p}$.  
 
 Then $\c M$ has the induced structure of a Lie 
$\F _p$-algebra (use the Lie bracket from $\c L_{\c K}$) and  
for $i\geqslant 0$, $\c M(i):=t^{ic_0}\c M$ is a decreasing 
filtration of ideals in $\c M$. Note that $e\in\c M$. 
 
Similarly, 
 $\c M_{<p}$ is a Lie $\F _p$-algebra (containing $\c M$ as its subalgebra) and  
for $i\geqslant 0$, $\c M_{<p}(i):=t^{ic_0}\c M_{<p}$ is a decreasing 
filtration of ideals in $\c M_{<p}$,     
$\c M_{<p}(i)\cap \c M=\c M(i)$. 

We have a natural embedding of $\bar{\c M}:=\c M/\c M(p-1)$ 
into $\bar{\c M}_{<p}:=\c M_{<p}/\c M_{<p}(p-1)$, and the induced decreasing filtrations 
of ideals $\bar{\c M}(i)$ and $\bar{\c M}_{<p}(i)$ 
(where $\bar{\c M}(p-1)=\bar{\c M}_{<p}(p-1)=0$)  
are compatible with this embedding.  

Note that for all $i\geqslant 0$, we have also  
$(\id _{\c L}\otimes h-\id _{\c M})^i\c M\subset \c M(i)$.

\begin{Lem} \label{L2.8} 
 $f, \sigma f\in\c M_{<p}$. 
\end{Lem}

\begin{proof} Prove by induction  on $1\leqslant s\leqslant p$ that 
$f,\sigma f\in\c M_{<p}+\c L(s)_{\c K_{<p}}$.  

If $s=1$ then $f\in\c L_{\c K_{<p}}=\c M_{<p}+\c L(1)_{\c K_{<p}}$. 

Suppose $1\leqslant s_0<p$ and $f, \sigma f\in\c M_{<p}+\c L(s_0)_{\c K_{<p}}$. 

For $1\leqslant s\leqslant s_0+1$ let $j_s=\op{rk}_{\F _p}(\c L/\c L(s))$. Then 
$0=j_1<j_2<\dots <j_{s_0+1}$. Let $l_1,\dots ,l_{j_{s_0+1}}\in\c L$ be such that 
for all $1\leqslant s\leqslant s_0+1$, $l_{j_s+1},\dots ,l_{j_{s_0+1}}$ give an $\F _p$-basis of 
$\c L(s)$ modulo $\c L(s_0+1)$. This means that for all such $s$, 
the elements $l_{j_s+1},\dots ,l_{j_{s+1}}$ form  $\F _p$-basis of $\c L(s)$ modulo $\c L(s+1)$. 

With above notation for $1\leqslant j\leqslant j_{s_0+1}$, 
there are unique $b_j\in\c K_{<p}$ such that $f\equiv \sum _jb_jl_j\,\op{mod}\,\c L(s_0+1)_{\c K_{<p}}$. 
By inductive assumption, 
if $s<s_0$ and $l_j\in\c L(s)\setminus \c L(s+1)$ then $b_j, \sigma b_j\in \m _{<p}t^{-c_0s}$ and 
we must prove that if $l_j\in \c L(s_0)$ then $b_j\in\m _{<p}t^{-c_0s_0}$.

Let $e\circ f=e+f+X(f,e)$. Then  
$X(f,e)\in\,\c M_{<p}+\c L(s_0+1)_{\c K_{<p}}$ (use that $e\in\c M_{<p}$ and 
$[\c M_{<p},\c L(s_0)_{\c K_{<p}}]\subset\c L(s_0+1)_{\c K_{<p}}$)  and, therefore,  
$\sigma f-f\in\c M_{<p}+\c L(s_0+1)_{\c K_{<p}}$. 

Thus, $\sigma f-f\equiv \sum _ja_jl_j$, where for all $s\leqslant s_0$ and 
$j_s<j\leqslant j_{s+1}$, we have $a_j\in\m _{<p}t^{-c_0s}$. In particular, 
for the indices $j_{s_0}<j\leqslant j_{s_0+1}$, we have  
$\sigma b_j-b_j\in\m _{<p}t^{-c_0s_0}$. 
Therefore, 
$$\sigma (b_jt^{c_0s_0/p})-t^{c_0s_0(1-1/p)}(b_jt^{c_0s_0/p})\in\m _{<p}\,,$$
and this implies that $b_jt^{c_0s_0/p}\in\m _{<p}$ and $\sigma b_j,b_j\in\m _{<p}t^{-c_0s_0}$. 
Lemma \ref{L2.8} is proved. 
 \end{proof}

 Consider the orbit of $\bar f:=f\,\op{mod}
 \,\c M_{<p}(p-1)$ with 
 respect to the natural action of $\wt{\c G}_{h}
\subset \Aut \,\c K_{<p}$ 
 on $\bar{\c M}_{<p}$. Prove that the stabilizer $\c H$ of $\bar f$ equals 
 $\wt{\c G}_{h}^{p}C_p(\wt{\c G}_{h})$.

 If $l\in G(\c L)$ then the corresponding element $\eta _0^{-1}(l)\in\c G_{<p}$ 
sends $f$ to $f\circ l$. 
This means that if 
$l\in\c H\cap G(\c L)$ then (use that $\c M(p-1)\subset\c L_{\m }+\c L(p)_{\c K}$) 
$$l\in \c M_{<p}(p-1)\cap \c L=\c M (p-1)\cap \c L
=\c L(p)_{\c K}\cap \c L=\c L(p)= C_p(\wt{\c G}_h)\, .$$ 
Therefore, 
$\c H\cap G(\c L)=C_p(\wt{\c G}_{h})\subset\c H$ and we have the induced embedding 
$\kappa :G(\c L)/G(\c L(p))
\To \wt{\c G}_h/\c H$. 

 Note that $\wt{\c G}_{h}^{p}\,\op{mod}\,C_p(\wt{\c G}_{h})$ 
 is generated by $h _{<p}^{0p}$ (as earlier, $h^0_{<p}$ is the lift 
 chosen in the end of Subsection \ref{S2.3}). This follows from the fact 
 that any finite $p$-group of nilpotent class 
 $<p$ is $P$-regular, cf. \cite{Ha} Subsections 12.3-12.4. In particular, 
for any $g\in G(\c L)$, 
$$(h^0_{<p}\circ g)^p\equiv h_{<p}^{0p}\circ g'\,\op{mod}\,C_p(\wt{\c G}_h)\,,$$ 
where $g'$ is the product of $p$-th powers of elements from 
$G(\c L)$, but $G(\c L)$ has period $p$.

 Recall that 
 $(\id _{\c L}\otimes h^0_{<p})f=c^0\circ (A^0 \otimes\id _{\c K_{<p}})f$   
with $c^0\in \c N(1)$, cf. Subsection \ref{S2.4},  and 
 $A^0=\op{Ad}\,(h^0 _{<p})$. Then   
 $h _{<p}^{0p}(f)$ is equal to 
$$(\id _{\c L}\otimes h) ^{p-1}\left (c^0\circ (A^0\otimes h^{-1})c^0\circ 
 \dots \circ (A^0\otimes h^{-1})^{p-1}c^0\right )\circ (A^{0p}\otimes\id _{\c K_{<p}})f\, .$$

 Note that  if $l\in\c L(s)$ then $A^0(l)\equiv l\,\op{mod}\,\c L(s+1)$. 
This implies that   
$(A^0-\id _{\c L})^{p}\c L\subset\c L(p)$ and, therefore, 
 $(A^{0p}\otimes\id _{\c K_{<p}})\bar f=\bar f$. 

For similar reasons we have for any $i$, that 
$(A^0\otimes\id _{\c K}-\id _{\c N})\c N(i)\subset\c N(i+1)$. 
At the same time, $h(t)\equiv t\,\op{mod}\,t^{1+c_0}$ implies that 
for any $n\in\c N(i)$, $(\id _{\c L}\otimes h^{-1})n\equiv n\,\op{mod}\,\c N(i+1)$. 
This implies that $B=A^0\otimes h^{-1}$ is an automorphism of 
the Lie $\F _p$-algebra $\c N$ and 
for all $i\geqslant 0$, 
 $(B-\id _{\c N})\c N(i)\subset 
 \c N(i+1)$.  

\begin{Lem} \label{L2.9} 
 For any $m\in\c N(1)$, $m\circ B(m)\circ\dots \circ B^{p-1}m\in\c N(p)$. 
\end{Lem}

\begin{proof} Consider the Lie algebra $\mathfrak{M}=\c N(1)/\c N(p)$ with 
the filtration $\{\mathfrak{M}(i)\}_{i\geqslant 1}$ induced by the filtration 
$\{\c N(i)\}_{i\geqslant 1}$. 
This filtration is central, i.e. for any $i,j\geqslant 1$, 
$[\mathfrak{M}(i), \mathfrak{M}(j)]\subset\mathfrak{M}(i+j)$. 
In particular, the nilpotent class of $\mathfrak{M}$ is $<p$. 

The operator $B$ induces the operator on $\mathfrak{M}$ which we denote also by $B$. 
Clearly, $B=\wt{\exp}\c B$, where $\wt{\exp}$ is the truncated exponential 
(cf. Subsection \ref{S2.1}) and $\c B$ is a differentiation on $\mathfrak{M}$ such that 
for all $i\geqslant 1$, $\c B(\mathfrak{M}(i))\subset \mathfrak{M}(i+1)$. 

Let $\wt{\mathfrak M}$ be a semi-direct product of $\mathfrak{M}$ and the trivial 
Lie algebra $\F _pw$ via $\c B$. This means that 
$\wt{\fr{M}}=\fr{M}\oplus\F _pw$ as $\F _p$-module, 
$\mathfrak{M}$ and $\F _pw$ are Lie subalgebras of 
$\wt{\mathfrak{M}}$ and for any $m\in\mathfrak{M}$, $[m,w]=\c B(m)$. 
Clearly, 
$C_2(\wt{\mathfrak{M}})=[\wt{\mathfrak{M}},\wt{\mathfrak{M}}]\subset \mathfrak{M}(2)$.  
This implies that   $\wt{\mathfrak{M}}$ 
has nilpotent class $<p$ and we can consider the 
$p$-group $G(\wt{\mathfrak{M}})$. This group has nilpotent class $<p$ and period $p$ 
(because for any $\bar m\in\wt{\mathfrak{M}}$, its $p$-th power in $G(\wt{\mathfrak{M}})$ 
equals $p\bar m=0$). 

Note that the conjugation by $w$ in $G({\mathfrak{M}})$ is given by the automorphism 
$\wt{\exp}\c B=B$. Indeed, 
if $m\in\mathfrak{M}$ then 
$$B(m)=(\wt{\exp}\c B)m=\sum _{0\leqslant n<p}\c B^n(m)/n!=(-w)\circ m\circ w,$$
cf. the reference to \cite{Bou} in the proof of Proposition \ref{P2.3}.

In particular, for any element   
$\bar m=m\,\op{mod}\,\c N(p)\in \mathfrak{M}$, 
we have $w_1\circ \bar m=B(\bar m)\circ w_1$, where $w_1=-w$. Therefore, 
$0=(\bar m\circ w_1)^p=\bar m\circ B(\bar m)\circ\dots \circ B^{p-1}(\bar m)\circ w_1^p$, 
and it remains to note that $w_1^p=w^{-p}=0$. 
\end{proof} 

Applying the above Lemma we obtain that 
$$c^0\circ (A^0\otimes h^{-1})c^0\circ 
 \dots \circ (A^0\otimes h^{-1})^{p-1}c^0\in\c N(p)\subset \c M(p-1)$$ 
and, therefore, 
$h^{0p}_{<p}(\bar f)=\bar f$.  

Thus, we proved that $\wt{\c G}^{p}_{h}C_p(\wt{\c G}_{h})\subset\c H$. 
\medskip 
 
Suppose $g=h_{<p}^{0m}l\in\c H$ with some $l\in G(\c L)$. Then we have 
$$g(f)\equiv f\,\op{mod}\,\c M_{<p}(p-1)\,.$$  
This congruence in the Lie algebra $\c M_{<p}$ 
can be replaced by the equivalent congruence 
$g(f)\equiv f\,\op{mod}\,G(\c M_{<p}(p-1))$ in the corresponding $p$-group 
$G(\c M_{<p})$, cf. comments to the equivalence 
$L\mapsto G(L)$ in the beginning of Introduction. Therefore, 
$g(f)=b\circ f$ where $b\in \c M_{<p}(p-1)$. Note that for obvious reasons  
$\sigma (b)\in \c M_{<p}(p-1)$. Then the equality  
$$g(e)\circ b\circ f=g(e)\circ g(f)=g(\sigma f)=
\sigma b\circ \sigma f=\sigma b\circ e\circ f$$ 
implies that $g(e)\equiv e\,\op{mod}\, \c M (p-1)$ and we obtain  
$$(\id\otimes h)^m(e)\equiv e\,\op{mod}\,\c M(p-1)\,.$$

Clearly, $\c L_{\m }+\c L(p)_{\c K}\supset \c M (p-1)$ 
and, therefore, for the element 
$$e_{<p}=\sum _{a\in\Z ^0(p)\cap [0, (p-1)c_0]}t^{-a}D_{a0}$$
we obtain 
$(\id _{\c L}\otimes h^m)e_{<p}\equiv e_{<p}\,\op{mod}\,\c L_{\m }$. 

This means for all $a\in\Z ^0(p)\cap [0, (p-1)c_0]$, $h^m(t^{-a})\equiv t^{-a}\,\op{mod}\,\m $,  
and we obtain that $m\equiv 0\,\op{mod}\,p$ (take e.g. $a=c_0+1$).

 Therefore, $l\in\c H\cap G(\c L)=C_p(\wt{\c G}_h)$ 
and $\c H\subset \wt{\c G}^{p}_{h}C_p(\wt{\c G}_{h})$. 

Finally, $\wt{\c G}_h/\c H=\c G_h$ and it remains to note 
that $\c H\,\op{mod}\,C_p(\wt{\c G}_h)=\langle h_{<p}^{0p}\rangle $ and, therefore,  
$\op{Coker}\kappa =\langle h\rangle \,\op{mod}\,\langle h^p\rangle $.  
\end{proof} 

\begin{Cor} \label{C2.10} 
If $L_h$ is a Lie algebra over $\F _p$ such that $\c G_{h}=G(L_h)$  
then 
\eqref{E2.5} induces 
the following short exact sequence of Lie $\F _p$-algebras 
$$0\To \bar{\c L}\To L_h\To \F _ph\To 0\, ,$$
where, as earlier, $\bar{\c L}=\c L/\c L(p)$. 
\end{Cor} 
\medskip

 \subsection{Ramification estimates} \label{S2.6} 
Use the identification $\eta _0:\c G_{<p}\simeq G(\c L)$ from Subsection \ref{S1.3}   
and set 
for $s\in\N $, $\c K [s]:=\c K_{<p}^{G(\c L(s+1))}$. 
Note that $\c K [s]/\c K$ is Galois and its Galois group is $G(\c L/\c L(s+1))$. 

Denote by 
$v[s]$ the maximal upper ramification number of the extension 
$\c K[s]/\c K$. In other words, 
$$v[s]=\max\{v\ |\ \c G^{(v)} \text{ acts non-trivially 
on }\c K[s]\}\,.$$

\begin{Prop} \label{P2.11}
For all $s\in\N $, $v[s]=c_0s-1$.  
\end{Prop}

\begin{proof}
 Recall that for any $v\geqslant 0$, 
$\pi _{f}(e)(\c G^{(v)})=\c L^{(v)}$ and for a sufficiently large 
$N$, the ideal 
 $\c L^{(v)}_k$ is generated by all $\sigma ^n\c F^0_{\gamma ,-N}$, 
 where $\gamma\geqslant v$, $n\in\Z $ and the elements 
 $\c F^0_{\gamma ,-N}$ are given in Subsection \ref{S1.4}. 
 
 Note that $\c L_k^{(v)}$ is contained in the ideal generated by 
 the monomials 
$\sigma ^n [\dots [D_{a_1n_1}, D_{a_2n_2}],\dots ,D_{a_rn_r}]$ 
 such that $\max\{n_1,\dots ,n_r\}=0$ and $a_1p^{n_1}+\dots +a_rp^{n_r}
 \geqslant v$. So, 
 $$v\leqslant a_1+\dots +a_r\leqslant 
 c_0\w ([\dots [D_{a_1n_1},
D_{a_2n_2}],\dots ,D_{a_rn_r}])-r\, .$$

 If $v>c_0s-1$ then $\w ([\dots [D_{a_1n_1},
D_{a_2n_2}],\dots D_{a_rn_r}])>s+(r-1)/c_0$ implies that all such 
monomials have  weight $\geqslant s+1$ and, therefore, 
$\c L^{(v)}\subset\c L(s+1)$. 
 
 If $v=c_0s-1$ then $\w ([\dots [D_{a_1n_1}, D_{a_2n_2}],
\dots D_{a_rn_r}])\leqslant s$ 
 iff $r=1$ and the only non-zero $a_i$ equals $c_0s-1$. 
 Therefore, $\c L_k^{(v)}\,\op{mod}\,\c L_k(s+1)$ is generated by 
 the images of all $D_{c_0s-1,n}$  
 and $\c L^{(v)}\not\subset \c L(s+1)$. 
\end{proof}
\medskip

\section{Structure of $L_h$}\label{S3} 

In next Sections we use the notation $h_{<p}$ for arbitrary 
lifts of $h$ to $\c K_{<p}$, in particular, we do not require that $h_{<p}$ coincides with $h^0_{<p}$ 
from the end of Subsection \ref{S2.3}. 
We shall use the notation $\c K(p):=\c K_{<p}^{G(\c L(p))}$ and 
$h(p):=h_{<p}|_{\c K(p)}$. Because $G(\c L(p))=C_p(\wt{\c G}_h)$ the elements 
of $\wt{\c G}_h$ map $\c K(p)$ to itself and we have a natural inclusion $\wt{\c G}_h/G(\c L(p))\subset 
\Aut \c K(p)$. The conjugations $\Ad h(p)$ on $G(\bar{\c L})\subset \wt{\c G}_h/G(\c L(p))$ 
(where $\bar{\c L}=\c L/\c L(p)$) 
can be used to recover the group structure on $\wt{\c G}_h/G(\c L(p))$. We have also 
the induced conjugations (which we still denote by $\Ad h(p)$) 
on $\c G_h=\wt{\c G}_h/\wt{\c G}_h^pG(\c L(p))$ and these conjugations can be used to study the 
structure of the group $\c G_h$ and its Lie algebra $L_h$ fom Corollary \ref{C2.10}.

 The conjugations $\Ad h(p)$  appear as unipotent automorphisms  
 of the Lie algebra $\bar{\c L}$ and we can introduce a differentiation 
$\op{ad}\,h(p)$ of $\bar{\c L}$ by the relation 
$\op{Ad}\,h(p)=\wt{\exp}(\op{ad}\,h(p))$, 
where $\wt{\exp }$ is the truncated exponential, cf. Subsection \ref{S2.1}. 
So, the knowledge of the Lie algebra 
$L_h$ is equivalent to the knowledge of the differentiation 
$\op{ad}\,h(p)$.  The lift $h(p)$ of $h$ can be fully desribed via the 
nilpotent Artin-Schreier theory by using the element 
$f\,\op{mod}\,\c L(p)_{\c K_{<p}}\in \bar{\c L}_{\c K(p)}$. As a matter of fact, 
the identification 
$\Gal (\c K(p)/\c K)\simeq G(\bar{\c L})$ is given by 
the correspondence 
$\tau\mapsto (-\bar f)\circ \tau (\bar f)$, where 
$\bar f=f\,\op{mod}\,\c M_{<p}(p-1)$, and the natural identification  
$\bar{\c L}=\bar{\c M}_{<p}|_{\sigma =\id }$.

\subsection{Interpretation of the action of 
$\id _{\bar{\c L}}\otimes h$ on $\bar{\c M}$}\label{S3.1}

Consider the induced action of $\id _{\bar{\c L}}\otimes h$ on $\bar{\c M}$ 
(and agree to use for this action the same notation). 
Recall that $h(t)=tE(\omega _h^p)$, where we can set 
$$\omega _h^p=\sum _{i\geqslant 0}A_i(h)t^{c_0+pi}$$
with all $A_i(h)\in k$, $A_0(h)\ne 0$, cf. Subsection \ref{S2.1}.  

Let $\c H$ be a linear continuous 
operator on $\c L_{\c K}$ such that for all 
$a\in\Z $ and $l\in\c L_k$, $\c H(t^al)=at^a\omega _h^pl$. 
Then on $\bar{\c M}$ we have $\id _{\bar{\c L}}\otimes h=\wt{\exp}(\c H)$ 
(use that $\c H^p=0$ on $\bar{\c M}$ and $E(X)\equiv \wt{\exp}(X)\,\op{mod}\,\deg p$).  

Set for $0\leqslant i<p$, $h_i:=\c H^i/i!:\bar{\c M}\To \bar{\c M}$ and for $i\geqslant p$, 
$h_i=0$. Then for any 
$j\geqslant 0$, $h_i(\bar{\c M}(j))\subset \bar{\c M}(i+j)$ and for any 
natural $n$, $(\id _{\bar{\c L}}\otimes h)^n=\sum _{i\geqslant 0}n^ih_i$. 
An analogue of these properties appears below when we start studying 
the action of $\id _{\bar{\c L}}\otimes h(p)$ on $\bar f\in\bar{\c M}_{<p}$. 

\subsection{General situation} \label{S3.2} 

The situation from above Subsection \ref{S3.1} can be formalized as follows. 

Suppose $\fr{M}$ is an $\F _p$-module (actually we can 
assume that $\fr{M}$ is a module over any ring 
where $(p-1)!$ is invertible).  
Suppose $g:\fr{M}\To\fr{M}$ is an automorphism of the $\F _p$-module $\fr{M}$ 
such that $g^p=\id _{\fr{M}}$. 
Assume that 
\medskip 

$\bullet $\ for any $m\in\fr{M}$, there are $g_i(m)\in\fr{M}$, where $1\leqslant i<p$, 
such that for all $n\geqslant 0$, $g^n(m)=m+\sum _{1\leqslant i<p}g_i(m)n^i$.  
\medskip 

Set $g_0(m)=m$ and $g_i(m)=0$ if $i\geqslant p$.

\begin{Prop} \label{P3.1}
 With above notation we have:
\medskip 

{\rm a)}\ for all $i\geqslant 0$,  $g _i:\fr{M}\To\fr{M}$  are unique linear morphisms;
\medskip 

{\rm b)}\ for all $i\geqslant 0$, $g_i(\fr{M})\subset (g-\id _{\fr{M}})^i(\fr{M})$;
\medskip 

{\rm c)}\ if $i_1,\dots ,i_s\geqslant 0$ then $(g_{i_1}\cdot\ldots\cdot g_{i_s})(\fr{M})
\subset (g-\id _{\fr{M}})^{i_1+\dots +i_s}(\fr{M})$; 
\medskip 

{\rm d)}\ the map $g^U =\sum _{i\geqslant 0}g_i\otimes U^i
:\fr{M}\To \fr{M}\otimes \F _p[[U]]$  determines the 
action of the formal additive group $\mathbb{G}_a=\op{Spf}\,\F _p[[U]]$ 
on $\fr{M}$;
\medskip 

{\rm e)}\ if $1\leqslant i<p$ then $g _i=g _1^i/i!$ 
(here $g_1^i=\underset{i\text{ times }}{\underbrace{g_1\cdot\ldots\cdot g_1}}$). 
\end{Prop}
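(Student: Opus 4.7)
The plan is to prove (a) first by a Vandermonde inversion, then to bootstrap everything else from two easy consequences of uniqueness: the commutation $g \circ g_i = g_i \circ g$, and a polynomial comparison coming from $g^{a+b} = g^a g^b$. The latter will yield the divided-power identity from which (d) and (e) fall out immediately; (b) will be a direct finite-difference calculation, and (c) will then follow from (b) together with commutativity.

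For (a), I would specialize the defining relation $g^n(m) = m + \sum_{i=1}^{p-1} g_i(m) n^i$ at $n = 1, 2, \ldots, p-1$, producing a $(p-1) \times (p-1)$ linear system for $g_1(m), \ldots, g_{p-1}(m)$ whose coefficient matrix is the Vandermonde $(i^j)$ on the distinct elements of $\F_p^*$. This system is invertible over $\F_p$, so each $g_i(m)$ is an explicit $\F_p$-linear combination of the $g^n(m) - m$, yielding both uniqueness and linearity in $m$. Applying the same uniqueness to the trivial identity $g\,g^n(m) = g^n\,g(m)$ gives $g \circ g_i = g_i \circ g$, so each $g_i$ commutes with $g - \id$ as well. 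Next, for integers $a, b \ge 0$, both sides of $g^{a+b}(m) = g^a(g^b(m))$ expand into polynomials in $\fr{M}[a, b]$ of bi-degree $\le (p-1, p-1)$ that agree on $\{0, \ldots, p-1\}^2$; a two-variable Vandermonde argument forces every $\fr{M}$-coefficient to match, and comparing the coefficient of $a^j b^i$ yields the divided-power identity
$$g_j \circ g_i = \binom{i+j}{j}\,g_{i+j}\,, \qquad i,j \ge 0,$$
with the convention $g_k = 0$ for $k \ge p$. This identity is exactly the cocycle condition needed for $g^U = \sum_i g_i \otimes U^i$ to define a $\mathbb{G}_a = \op{Spf}\,\F_p[[U]]$-action, giving (d); specializing $i = 1$ yields $g_{j+1} = g_1 g_j/(j+1)$, and induction on $j$ gives $g_i = g_1^i/i!$ for $1 \le i < p$, which is (e).

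For (b), I would expand $(g - \id)^i m = \sum_{j=0}^i \binom{i}{j}(-1)^{i-j} g^j(m)$ and substitute the polynomial formula for $g^j(m)$. The classical identity $\sum_j \binom{i}{j}(-1)^{i-j} j^k = i!\,S(k,i)$ (zero for $k < i$, equal to $i!$ for $k = i$) reduces this to the upper-triangular system
$$(g - \id)^i(m) = i!\,g_i(m) + \sum_{k > i} i!\,S(k,i)\,g_k(m),$$
whose diagonal entries $i!$ are units in $\F_p$ for $i < p$; inverting it expresses each $g_i(m)$ as an $\F_p$-linear combination of $(g - \id)^k m$ with $k \ge i$, which proves (b). For (c), an induction on $s$ suffices: using that $g_{i_1}$ commutes with $g - \id$, one has $g_{i_1}\bigl((g - \id)^{i_s}\fr{M}\bigr) = (g - \id)^{i_s} g_{i_1}(\fr{M}) \subset (g - \id)^{i_s + i_1}\fr{M}$ by (b), and iterating peels off one $g_{i_j}$ at a time. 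The main piece of bookkeeping is the two-variable polynomial comparison that establishes the divided-power law; everything else is routine once that identity is in hand.
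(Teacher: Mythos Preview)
Your proof is correct and follows essentially the same overall strategy as the paper: Vandermonde inversion for (a), a finite-difference computation for (b) (the paper uses the forward-difference operator $\Delta$ where you invoke the Stirling-number identity directly; these are the same calculation), and the additivity $g^{a+b}=g^a g^b$ as the source of (d) and (e). The one place where your route is slightly more direct is (d)--(e): you extract the divided-power law $g_j\circ g_i=\binom{i+j}{j}g_{i+j}$ by a two-variable coefficient comparison and read off both (d) and (e) from it, whereas the paper first establishes the $\mathbb{G}_a$-coaction identity for (d) and then proves (e) separately by comparing coefficients of $U^{\otimes i}$ under the iterated coaddition. For (c) you use the commutation $g_{i_1}(g-\id)=(g-\id)g_{i_1}$ obtained from uniqueness; the paper phrases the same step as applying (b) to the $g$-stable submodule $(g-\id)^j\fr{M}$, which is equivalent.
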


\begin{proof} 
 For any $m\in\fr{M}$, $g _1(m),\dots ,g _{p-1}(m)$ are unique solutions 
of the non-degenerate system of equations 
$$\sum _{1\leqslant i<p}g _i(m)n^i=g^n(m)-m\,$$ 
where $n=1,\dots ,p-1$. Therefore, all $g_i(m)$ are unique and depend linearly on $m$. 
This proves a). 

For $i\geqslant 0$ and $F\in\mathfrak M\otimes\F _p[[U]]$, 
define the $i$-th differences 
$(\Delta ^iF)(U)\in\fr{M}\otimes \F _p[[U]]$ by setting 
$\Delta ^0F=F$ and 
$$(\Delta ^{i+1}F)(U)=(\Delta ^iF)(U+1)-(\Delta ^iF)(U).$$ 

In particular, for $0\leqslant j<i$, $\Delta ^i(m\otimes U^j)= 0$ and 
$(\Delta ^i)(m\otimes U^i)=i!m$.  Therefore, for any $i\geqslant 0$, 
\begin{equation}\label{E3.1}
 (\Delta ^ig^U(m))|_{U=0}=i!g_i(m)+
\sum _{j>i}f_{ij}g_j(m),
\end{equation}
where all $f_{ij}\in \F _p$. 
Note that for every value $n_0\geqslant 0$,  
$$(\Delta ^1g^U)(m)|_{u=n_0}=g(g^U(m)|_{u=n_0})-g^U(m)|_{u=n_0}\in (g-\id _{\fr{M}})(\fr{M}),$$
$$(\Delta ^2g^U)(m)|_{u=n_0}=g ((\Delta ^1g^U)(m)|_{u=n_0})-(\Delta ^1g^U)(m)|_{u=n_0}
\in (g-\id _{\fr{M}})^2(\fr{M})$$ 
and so on. Therefore, for any $i\geqslant 0$, 
$$(\Delta ^ig^U)(m)|_{U=n_0}\in (g-\id _{\fr{M}})^i\fr{M}\,.$$
Then \eqref{E3.1} implies (use $i=p-1$) that $g_{p-1}(m)\in (g-\id _{\fr{M}})^{p-1}(\fr{M})$ 
and then by descending induction on $i$ that $g_i(m)\in
(g-\id _{\fr{M}})^i(\fr{M})$. This proves b). 

In c) use induction on $s$. The case $s=1$ is proved in b). If $s>1$ then 
we must prove with $j=i_2+\dots i_s$ that 
$$g_{i_1}((g-\id _{\fr{M}})^{j}\fr{M})
\subset (g-\id _{\fr{M}})^{i_1+j}\fr{M}\,.$$ 
This can be obtained from a) by replacing $\fr{M}$ to $(g-\id _{\fr{M}})^j\fr{M}$.

For any natural numbers $n_1,n_2$ the relation $g^{n_1+n_2}(m)=g^{n_2}(g^{n_1}(m))$ 
means that 
$$\sum _{0\leqslant i<p}(n_1+n_2)^ig _i=\sum _{0\leqslant i_1,i_2<p}
n_2^{i_2}n_1^{i_1}g _{i_2}\circ g _{i_1}\,,$$
and implies that we have the appropriate identity of formal power series 
$$(g^U\otimes\id _{\mathbb{G}_a})\circ g^U =(\id _{\fr{M}}\otimes \Delta _{\mathbb{G}_a})
\circ g^U \,,$$ 
with the coaddition $\Delta =\Delta _{\mathbb{G}_a}$ in $\mathbb{G}_a$ such that 
 $\Delta (U)=U\otimes 1+1\otimes U$. This proves d). 

If $i\geqslant 1$ the above identity for $g^U$ implies the identity 
$$(g^U\otimes\id _{\mathbb{G}_a^{i}})\circ\dots \circ (g^U\otimes\id _{\mathbb{G}_a})
\circ g^U =(\id _{\fr{M}}\otimes\Delta ^{(i)})\circ g^U\,, $$
where 
$\Delta ^{(i)}=(\Delta \otimes\id _{\mathbb{G}_a^{i-1}})
\circ\dots \circ (\Delta \otimes\id _{\mathbb{G}_a})\circ\Delta  $  
is the $i$-th coaddition $\F _p[[U]]\To \F _p[U]^{\otimes i}$ 
for $\mathbb{G}_a$. Then e) can be obtained by compairing the coefficients for $U^{\otimes i}$ 
in this identity.  
 \end{proof}

\begin{definition} 
 $dg^U:=g_1\otimes U:\fr{M}\To \fr{M}\otimes U$ is the differential of $g$. 
\end{definition}

By above Proposition \ref{P3.1}e) the action of $g$ on $\fr{M}$ can be uniquely 
recovered from its differential $dg^U$.

\subsection{Auxiliary statement} \label{S3.3} 

Assume that $\fr{L}$ is a finite Lie algebra over $\F _p$. 
Let $\c A=\c A(\fr{L})$ be the enveloping algebra of $\fr{L}$. 
Then we have a canonical embedding $\fr{L}\To \c A$.  
Provide $\c A$ with a standard structure of a coalgebra 
$\Delta :\c A\To \c A\otimes\c A$ by setting $\Delta (l)=l\otimes 1+1\otimes l$ 
for all $l\in \fr{L}$. 

Let $J=J(\fr{L})$ be the augmentation ideal of $\c A$ generated by all $l\in \fr{L}$. 
Note that $\c A\otimes\c A$ can be identified 
with the enveloping algebra of $\fr{L}\oplus \fr{L}$ 
and the appropriate augmentation ideal equals 
$J(\fr{L}\oplus \fr{L})=J\otimes \c A+\c A\otimes J$. 

Suppose $\fr{L}$ has nilpotent class $<p$. Then we have the following 
interpretation of the Campbell-Hausdorff operation $\circ $ on $\fr{L}$ 
in the envelopping algebra $\c A$:
\medskip 

$\alpha $)\ $\fr{L}=\{a\in\c A\,\op{mod}\,J(\fr{L})^p\ |
\ \Delta a\equiv a\otimes 1+1\otimes a\ \op{mod}\,
J(\fr{L}\oplus \fr{L})^p\}$;
\medskip 

$\beta $)\ the truncated exponential 
$\wt{\exp}$ establishes a group isomorphism 
$\iota :G(\fr{L})\To\c D(\fr{L})$, where 
$$\c D(\fr{L})=\{a\in 1+J(\fr{L})\,\op{mod}\,J(\fr{L})^p\ |\ 
\Delta a\equiv a\otimes a\,\op{mod}\,J(\fr{L}\oplus \fr{L})^p\}$$ 
is the group of \lq\lq\ diagonal elements of $\c A$ modulo degree 
$p$\rq\rq\ with respect to the operation induced 
by the multiplication in $\c A$;
\medskip 

$\gamma $)\ $\iota ^{-1}:\c D(\fr{L})\To G(\fr{L})$ is given via the truncated logarithm 
$\wt{\log}$. 
\medskip

Let $l_1,\dots ,l_r$ 
be an $\F _p$-basis of $\fr{L}$. Then by the Poincare-Birkhoff-Witt Theorem, 
$\c B_1=\{l_{i_1}\dots l_{i_s}\ |\ s\geqslant 0,\, i_1\leqslant \dots \leqslant i_s\}$  
is an $\F _p$-basis of $\c A$ and $\c A\,\op{mod}\,J(\fr{L})^p$ can 
be identified with the submodule $\c M_1$ of $\c A$ generated by the 
elements of $\c B_1^{<p}:=\{l_{i_1}\dots l_{i_s}\in \c B_1\ |\ s<p\}$.

For similar reasons, use the basis $\{(l_i,0), (0,l_i)\ |\ 1\leqslant i\leqslant r\}$ 
of $\fr{L}\oplus\fr{L}$ to construct the $\F _p$-basis for $\c A\otimes\c A$ in the form 
$$\c B_2=\{l_{i_1}\dots l_{i_{s}}\otimes l_{j_1}\dots l_{j_{t}}\ |\ s,t\geqslant 0,\, 
i_1\leqslant \dots \leqslant i_{s},\, j_1\leqslant\ldots\leqslant j_{t}\}\, .$$  
Then $\c A\otimes\c A\,\op{mod}\,J(\fr{L}\oplus\fr{L})^p$ can be identified 
with the module $\c M_2$ generated by the subset $\c B_2^{<p}$ of $\c B_2$ consisting of elements 
with $s+t<p$. 

Let $\delta ^+=\Delta -\id _{\c A}\otimes 1-1\otimes\id _{\c A}$. Then 
$\delta ^+(\c M_1)\subset\c M_2$ and it is easy to see that:
\medskip 

$\bullet $\ $\fr{L}\subset\Ker\,\delta ^+$; 
\medskip 

$\bullet $\ if $l\in \c B_1^{<p}\setminus\fr{L}$ then $l\notin \Ker\,\delta ^+$; 
\medskip 

$\bullet $\ if $l',l''\in \c B_1^{<p}\setminus\fr{L}$ then 
$\delta ^+(l')$ and $\delta ^+(l'')$ are linear combinations of disjoint groups 
of elements of $\c B_2^{<p}$. 
\medskip 

In other words, we have a direct sum of non-zero submodules 
$$\delta ^+(\c M_1)=\underset{l\in\c B_1^{<p}\setminus\fr{L}}\oplus \F _p\delta ^+(l)\,.$$

The above facts prove $\alpha )$. The verification of $\beta $) and $\gamma $) is formal. 
\medskip 

In this paper we are dealing with more elaborate situation.

Suppose $\fr{L}$ is provided with a   
decreasing filtration of ideals $\{\fr{L}^{i}\}_{i\geqslant 0}$ such that 
$\fr{L}^0=\fr{L}$ and $\fr{L}^i=0$ if $i\geqslant p$. 
 Define the weight function on $\fr{L}$ by 
setting $\op{wt}^*(0)=\infty $ and $\op{wt}^*(l)=i$ if $l\in \fr{L}^i\setminus \fr{L}^{i+1}$. 

Assume in addition that the filtration $\{\fr{L}^i\}$ is \lq\lq central\rq\rq , i.e. for any 
$i,j\geqslant 0$, $[\fr{L}^i,\fr{L}^j]\subset \fr{L}^{i+j}$. 

Suppose the $\F _p$-basis $\{l_i\ |\ 1\leqslant i\leqslant r\}$ of $\fr{L}$ is 
compatible with the filtration $\{\fr{L}^i\}_{i\geqslant 0}$, i.e. 
there are $0=j_0\leqslant j_1\leqslant\dots \leqslant j_p=r$ such that 
for any $i\geqslant 0$, 
$\{l_j\ |\ j_i< j\leqslant r\}$ is an $\F _p$-basis of $\fr{L}^i$. 
Use again $\c B_1$ as a basis of $\c A$ over $\F _p$. Extend $\op{wt}^*$ to $\c A$ 
by setting for every non-zero $\F _p$-linear combination, 
$$\op{wt}^*\left (\sum _{i_1,\dots ,i_s}\alpha _{i_1\dots i_s}l_{i_1}\dots l_{i_s}\right )
=\min \{\op{wt}^*(l_{i_1})+\dots +\op{wt}^*(l_{i_s})\ |\ \alpha _{i_1\dots i_s}\ne 0\}\,.$$ 

Let  
$\c A^i=\{a\in\c A\ |\ \op{wt}^*(a)\geqslant i\}$.  Then 
for any $i,j\geqslant 0$, $\c A^i\c A^j\subset \c A^{i+j}$ 
(use that $\{\fr{L}^i\}$ is \lq\lq central\rq\rq ). In particular, 
$\{\c A^i\}_{i\geqslant 0}$ 
is a decreasing filtration of ideals of $\c A$. Obviously, $\c A^i\cap \fr{L}=\fr{L}^i$.

Let $B$ be a $\Z _p$-linear operator on $\fr{L}$ such that 
for any $l\in \fr{L}^i$, $B(l)\equiv l\,\op{mod}\,\fr{L}^{i+1}$. 
For $l\in \fr{L}$ and $n\in\N $, set in the appropriate $p$-group $G(\fr{L})$,  
$l[n]:=l\circ B(l)\circ \dots \circ B^{n-1}(l)$. 

\begin{Prop} \label{P3.2} Suppose $l\in\fr{L}^1$. 
For $1\leqslant i\leqslant p-1$ there are (unique) 
$l_i\in \fr{L}^i$ such that for any $n\geqslant 0$, 
$l[n]=l_1n+l_2n^2+\dots +l_{p-1}n^{p-1}$. 
\end{Prop}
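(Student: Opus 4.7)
I would work in the truncated universal enveloping algebra $\c A/J^p$ and exploit the isomorphism $\iota=\wt{\exp}\colon G(\fr L)\to \c D(\fr L)$ from Subsection~3.3, which converts the Campbell--Hausdorff product $\circ$ into ordinary associative multiplication. Then $\iota(l[n])=\c E(n):=\prod_{j=0}^{n-1}\wt{\exp}(B^j(l))$ in $\c A/J^p$, and the claim becomes: $l[n]$ is a polynomial in $n$ of degree $\le p-1$ with zero constant term, whose coefficient $l_i$ lies in $\fr L^i$. Uniqueness is immediate: if $\sum_{i=1}^{p-1}l'_i n^i=0$ for all $n\ge 0$, specializing to $n=1,\dots,p-1$ gives a Vandermonde system over $\F_p$, forcing all $l'_i=0$.

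For existence, decompose $B=\id+T$ where $T$ raises $\{\fr L^i\}$ by one. Since $\fr L^p=0$, one has $T^\alpha(l)=0$ for $\alpha\ge p-1$, so $B^j(l)=\sum_{\alpha=0}^{p-2}\binom{j}{\alpha}T^\alpha(l)$ is polynomial in $j$ with $T^\alpha(l)\in\fr L^{1+\alpha}$. Expand $\wt{\exp}(B^j(l))=\sum_{k=0}^{p-1}B^j(l)^k/k!$ and group the product $\c E(n)$ by the support $\{j_1<\dots<j_s\}$ of nonzero exponents (necessarily $s\le p-1$ in $\c A/J^p$): this writes $\c E(n)$ as a sum of iterated sums $\sum_{0\le j_1<\dots<j_s<n}$ of polynomials in the $j_i$. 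The iterated hockey-stick identity says $\sum_{0\le j_1<\dots<j_s<n}j_1^{\beta_1}\cdots j_s^{\beta_s}$ is a polynomial in $n$ of degree $s+\beta_1+\dots+\beta_s$.

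The key weight-degree estimate is: a contribution of $\op{wt}^*$-weight $w$ with support size $s$ has multiplicities $k_i\ge 1$ and binomial exponents $\alpha_{i,r}$ satisfying $\sum_i k_i+\sum_{i,r}\alpha_{i,r}=w$; since $\sum k_i\ge s$, the total $j$-degree $\sum_{i,r}\alpha_{i,r}\le w-s$, so after summation the polynomial in $n$ has degree $\le s+(w-s)=w$. Hence $\c E(n)\bmod \c A^{w+1}$ is a polynomial in $n$ of degree $\le w$. Since $\wt{\log}(1+x)=\sum_{r=1}^{p-1}(-1)^{r+1}x^r/r$ is polynomial in $x=\c E(n)-1$, and products of weight-$w_i$ pieces give weight-$\sum w_i$ pieces of $n$-degree $\le\sum w_i$, the bound transfers: $l[n]=\wt{\log}(\c E(n))\bmod\fr L^{w+1}$ is polynomial in $n$ of degree $\le w$.

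Taking $w=p-1$ (using $\fr L^p=0$) gives $l[n]=\sum_{i=0}^{p-1}l_i n^i$ in $\fr L$, and $l[0]=0$ forces $l_0=0$. For each $w$, comparing the reduction $\sum_{i=0}^{p-1}(l_i\bmod\fr L^{w+1})n^i$ with the degree-$\le w$ polynomial it equals: the difference is a polynomial of degree $<p$ in $n$ with coefficients in $\fr L/\fr L^{w+1}$ vanishing at all $n\in\F_p$, hence identically zero; so $l_i\in\fr L^{w+1}$ for $i>w$, and setting $w=i-1$ yields $l_i\in\fr L^i$. The main obstacle is the weight-degree bookkeeping: verifying that $\sum k_i\ge s$ tightly absorbs the ``$+s$'' from iterated summation, and that this bound survives the passage from the multiplicative $\c E(n)$ to its truncated logarithm.
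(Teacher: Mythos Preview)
Your proof is correct and follows essentially the same route as the paper: both pass to the enveloping algebra $\c A/J^p$ via $\wt{\exp}$, expand the product $\prod_{j<n}\wt{\exp}(B^j(l))$ into iterated sums $\sum_{0\le m_1<\dots<m_s<n}m_1^{i_1}\cdots m_s^{i_s}$, use that these are polynomials in $n$ of degree $s+i_1+\dots+i_s$ (the paper isolates this as Lemma~3.3), match the $n$-degree against the $\c A^i$-filtration, and return via $\wt{\log}$. The only cosmetic difference is that the paper writes $B=\wt{\exp}(\c B)$ so that $B^m(l)=\sum_i(\c B^i(l)/i!)m^i$, whereas you write $B=\id+T$ and use $B^j(l)=\sum_\alpha\binom{j}{\alpha}T^\alpha(l)$; both give $B^m(l)$ as a polynomial in $m$ with coefficients of increasing filtration weight, and the subsequent bookkeeping is the same. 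One point worth making explicit: your ``iterated hockey-stick'' polynomial has $\Z_p$-coefficients only when its degree $s+\sum\beta_i$ is $<p$ (this is exactly what Lemma~3.3 proves by induction), and your weight bound $w\le p-1$ is what guarantees this.
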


\begin{proof} Prove the existence of $l_i\in \fr{L}^i$. 
 (For the uniqueness of $l_i$, proceed similarly to Proposition \ref{P3.1}a).)

Clearly, $B=\wt{\exp }(\c B)$, where 
$\c B$ is a linear operator on $\fr{L}$ such that 
for all $i$, $\c B(\fr{L}^i)\subset \fr{L}^{i+1}$. 
If for $0\leqslant i\leqslant p-1$, $l'_{i}=\c B^i(l)/i!$ then 
$l'_{i}\in \fr{L}^{i+1}$ and for any $m\geqslant 0$, 
$B^m(l)=\wt{\exp }(m\c B)(l) =\sum _{i\geqslant 0}l'_{i}m^i$. (We set $0^0=1$.) 

Let $\c E :\fr{L}\To \c A$ be the map given by the truncated exponential.  
Then for $i\geqslant 0$, there are 
$d_{i}\in \c A ^{i+1}$ such that for any $m\geqslant 0$, 
$$\c E (B^m(l))=1+\sum _{i\geqslant 0}d_{i}m^i\,.$$
Therefore, \ 
$\c E(l)\c E(B(l))\dots \c E(B^{n-1}(l))=$ 
$$1+
\sum _{\substack{1\leqslant s<n\\ i_1,\dots ,i_s\geqslant 0}}
\left (\sum _{0\leqslant m_1<\dots <m_s<n}
m_1^{i_1}\dots m_s^{i_s}\right )d_{i_1}\dots d_{i_s}\,.$$
Let $d(i_1,\dots ,i_s):=i_1+\dots +i_s+s$ and 
$$\sum _{0\leqslant m_1<\dots <m_s<n}m_1^{i_1}\dots m_s^{i_s}=f_{i_1\dots i_s}(n)\,.$$

Note that $d_{i_1}\dots d_{i_s}\in \c A^{d(i_1,\dots ,i_s)}$. 

\begin{Lem} \label{L3.3} 
If $s\geqslant 1$, $i_1,\dots ,i_s\geqslant 0$ and 
$d(i_1,\dots ,i_s)<p$ then there are polynomials $F_{i_1\dots i_s}\in\Z _p[U]$ 
such that:
\medskip 

{\rm a)}  for all $n$, $F_{i_1\dots i_s}(n)=f_{i_1\dots i_s}(n)$;
\medskip 
 
{\rm b)} $F_{i_1\dots i_s}(0)=0$;
\medskip 

{\rm c)}  $\deg F_{i_1\dots i_s}=d(i_1,\dots ,i_s)$. 
\end{Lem}

\begin{proof}[Proof of Lemma] First, consider the case $s=1$. 

Apply induction on $i_1$. 

If $i_1=0$ then $f_0(n)=n$ and we can take $F_0=U$. 

Suppose $i_1\geqslant 1$, $d(i_1)<p$ (i.e. $0\leqslant i_1\leqslant p-2$) and our 
Lemma is proved for all indices $j<i_1$. 

For any $m<n$ we have,  
$$(m+1)^{i_1+1}-m^{i_1+1}=\sum _{0\leqslant j\leqslant i_1}C_j(i_1)m^j\, ,$$
where all $C_j(i)\in\Z _p$. Therefore, for any $n\geqslant 0$, 
$$n^{i_1+1}=\sum _{0\leqslant j\leqslant i_1}C_j(i_1)f_j(n)
=\sum _{0\leqslant j<i_1}C_j(i_1)F_j(n)+(i_1+1)f_{i_1}(n)$$ 
and we can take as $F_{i_1}(U)$ the polynomial 
$$\frac{1}{i_1+1}\left (U^{i_1+1}-
\sum _{0\leqslant j<i_1}C_j(i_1)F_j(U)\right )= 
\sum _{j\leqslant i_1+1}A_j(i_1)U^j\in\Z _p[U]\, .$$ 
Clearly, 
the degree of $F_{i_1}$ equals $i_1+1=d(i_1)$ and $F_{i_1}(0)=0$. 
The case $s=1$ is considered. 
\medskip  

Suppose $s>1$ and use induction on $s$. Then for any $m<n$, 
$$f_{i_1\dots i_s}(m+1)-f_{i_1\dots i_s}(m)=
\sum _{0\leqslant m_1<\dots <m_s=m}m_1^{i_1}\dots m_s^{i_s}
=m^{i_s}F_{i_1\dots i_{s-1}}(m)\,.$$
By induction assumption we have 
$$F_{i_1\dots i_{s-1}}(U)=\sum _{j\leqslant d(i_1,\dots ,i_{s-1})}
A_j(i_1,\dots ,i_{s-1})U^j\in\Z _p[U]\, .$$
Then for any $n\geqslant 1$ 
(note that $d(i_1,\dots ,i_s)-1=d(i_1,\dots ,i_{s-1})+i_s$), 
$$f_{i_1\dots i_s}(n)=
\sum _{i_s\leqslant j\leqslant d(i_1,\dots ,i_s)-1}A_{j-i_s}(i_1,\dots ,i_{s-1})F_j(n)\,,$$
and we can take $F_{i_1\dots i_s}=\sum _{i_s\leqslant j
\leqslant d(i_1,\dots ,i_s)-1}A_{j-i_s}(i_1,\dots ,i_{s-1})F_j$. Clearly, the degree 
of $F_{i_1\dots i_s}$ equals $d(i_1,\dots ,i_s)$ and $F_{i_1\dots i_s}(0)=0$. 
\end{proof} 

The above lemma implies that for all $n\geqslant 1$, 
$$\c E(l[n])=1+\sum _{1\leqslant i\leqslant p-1}d_i'n^i+a(l,n)\, ,$$
where all $d_i'\in \c A^i$ and $a(l,n)\in\c A^p$ (recall that $\c A^p\supset J(\fr{L})^p$).

Applying to this equality the truncated logarithm we obtain that 
$l[n]=d''_1n+\dots +d''_{p-1}n^{p-1}+b(l,n)$, where all $d''_i\in \c A^i$  
and $b(l,n)\in \c A^p$. Therefore, for all $1\leqslant n\leqslant p-1$, we have 
$d''_1n+\dots +d''_{p-1}n^{p-1}\in\fr{L}+\c A^p$. 
This implies that all $d''_i\in\fr{L}+\c A^p$ (use that 
$\det (n^{i})_{1\leqslant n,i<p}\not\equiv 0\,\op{mod}\,p$), i.e. 
$d''_i\in \c A^i\cap (\fr{L}+\c A^p)=\fr{L}^i+\c A^p$ 
(use that for $0\leqslant i<p$, $\c A^i\cap\fr{L}=\fr{L}^i$). Finally, if 
$l_i\in \fr{L}$ are such that $d_i''-l_i\in\c A^p$ then 
$$l[n]-(l_1n+l_2n^2+\dots +l_{p-1}n^{p-1})\in \fr{L}\cap\c A^p=0\, .$$

The proposition is proved.
\end{proof}

 As a matter of fact, the proof of Proposition \ref{P3.2} gives the following result:
\medskip 

$\bullet $\ {\it If $i^0\geqslant 1$ and $l\in \fr{L}^{i^0}$ then 
for $1\leqslant i\leqslant p-i^0$ 
there are unique $l_i\in \fr{L}^{i+i^0-1}$ such 
that for any $n\geqslant 0$, $l[n]=l_{1}n+\dots +
l_{p-i^0}n^{p-i^0}$}. 
\medskip 

We should formally follow the above proof of Proposition \ref{P3.1}. Then $l\in \fr{L}^{i_0}$ 
implies that all $l_i'\in \fr{L}^{i+i_0}$, 
$d_i\in \c A^{i+i_0}$. 
\medskip 
Lemma \ref{L3.3} remains unchanged and, finally, all 
$d_i'\in\c A^{i+i_0-1}$ and all $l_i\in \c A^{i+i^0-1}\cap \fr{L}=\fr{L}^{i+i_0-1}$ 
if $i\leqslant p-i_0$. 

This allows us to state the following result.

\begin{Prop} \label{P3.4} 
 There are linear maps $\pi _i:\fr{L}^1\To\fr{L}^1$ such that 
for any $j\geqslant 0$, $\pi _i(\fr{L}^j)\subset\fr{L}^{i+j-1}$ 
(in particular, $\pi _i=0$ if $i\geqslant p$) and for any $l\in\fr{L}^1$ 
and $n\in\N $, $l[n]=\sum _i\pi _i(l)n^i$. 
\end{Prop}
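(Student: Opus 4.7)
\emph{Plan.}  For existence of the $\pi_i$ and the filtration property, I would simply read off what is proved in Proposition \ref{P3.2} together with the refinement stated immediately after its proof.  For $l\in\fr{L}^j$ with $j\geq 1$, the refinement furnishes unique $l_{j+k-1}\in\fr{L}^{j+k-1}$ (for $1\leq k\leq p-j$) such that $l[n]=\sum_{k}l_{j+k-1}n^k$.  Setting $\pi_i(l):=l_{j+i-1}$ on $\fr{L}^j$ (and $\pi_i(l)=0$ once $j+i-1\geq p$) produces a set-map $\pi_i:\fr{L}^1\to\fr{L}^1$ with $\pi_i(\fr{L}^j)\subset\fr{L}^{i+j-1}$.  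The uniqueness in Proposition \ref{P3.2} guarantees that the values obtained on $\fr{L}^j\subset\fr{L}^1$ via the $j$-level refinement agree with those obtained via the $1$-level statement, so the $\pi_i$ are unambiguously defined regardless of which filtration step $l$ actually lies in.

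The main content of the proposition is the linearity of each $\pi_i$, and my plan is to revisit the proof of Proposition \ref{P3.2} and read off an explicit formula for $\pi_i(l)$ that is manifestly $\F_p$-linear in $l$.  Recall that in that proof $\c E(l[n])\equiv 1+\sum_j d'_j n^j\pmod{\c A^p+J(\fr{L})^p}$, with $d'_j\in\c A^j$ assembled via Lemma \ref{L3.3} from products $d_{i_1}\cdots d_{i_s}$, and each $d_i$ is the $m^i$-coefficient of $\c E(B^m(l))=\wt\exp\bigl(\sum_r l'_r m^r\bigr)$ where $l'_r=\c B^r(l)/r!$.  The $l'_r$ are $\F_p$-linear in $l$, so the passage $l\mapsto (l'_0,l'_1,\dots)\mapsto l[n]$ factors as a linear map followed by a universal algebraic expansion.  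Applying $\wt\log$ to $1+\sum_j d'_j n^j$ and collecting the coefficient of $n^i$ then produces $\pi_i(l)$ as an explicit iterated-bracket expression in the $l'_r$.

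The hard part is to show that after all the dust settles only $\F_p$-linear-in-$l$ terms survive in $\pi_i(l)$.  The nonlinear-in-$l$ contributions (arising both from the $s\geq 2$ products $d_{i_1}\cdots d_{i_s}$ and from the quadratic and higher terms of $\wt\log$) must either cancel among themselves or land in a strictly deeper filtration layer and be absorbed into some $\pi_{i'}$ with $i'>i$.  I would organise this as a descending induction on filtration depth: at top weight one has $\pi_1(l)\equiv l-\tfrac12\c B(l)\pmod{\fr{L}^2}$, which is visibly linear, and the inductive step peels off one filtration layer at a time, using the central property $[\fr{L}^a,\fr{L}^b]\subset\fr{L}^{a+b}$, the explicit coefficients produced by Lemma \ref{L3.3}, and the Jacobi identity to track the cancellations.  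This combinatorial bookkeeping is where the genuine work lies and is, I expect, the main obstacle; by contrast the existence and filtration statements come essentially for free from what has already been proved.
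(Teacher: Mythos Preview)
Your treatment of existence and of the filtration property $\pi_i(\fr{L}^j)\subset\fr{L}^{i+j-1}$ is exactly how the paper proceeds: the bullet-point refinement of Proposition~\ref{P3.2} recorded just before Proposition~\ref{P3.4} supplies both, and the paper offers no further argument --- the sentence ``This allows us to state the following result'' is the entire bridge.

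However, the ``hard part'' you identify --- $\F_p$-linearity of the $\pi_i$ --- is not merely hard: it is \emph{false} as stated, so the combinatorial cancellations you plan to chase will not materialise. Take $p\geqslant 5$, let $\fr{L}=\F_px\oplus\F_py\oplus\F_pz$ with $z=[x,y]$ central, filtration $\fr{L}^1=\fr{L}$, $\fr{L}^2=\langle y,z\rangle$, $\fr{L}^3=\langle z\rangle$, $\fr{L}^4=0$, and derivation $\c B$ given by $\c B(x)=y$, $\c B(y)=\c B(z)=0$ (so $B=\wt\exp(\c B)$ satisfies the hypotheses). A short recursion yields
\[
x[n]=nx+\tfrac{n^2-n}{2}\,y+\tfrac{n^3-n}{12}\,z,\qquad
(2x)[n]=2nx+(n^2-n)\,y+\tfrac{n^3-n}{3}\,z,
\]
so $\pi_3(x)=\tfrac{1}{12}z$ while $\pi_3(2x)=\tfrac{1}{3}z\neq 2\pi_3(x)$. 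Since the map $l\mapsto l[n]$ is built from the Campbell--Hausdorff law, there is no reason for its polynomial-in-$n$ coefficients to depend linearly on $l$, and indeed they do not.

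The paper does not prove linearity either, and Proposition~\ref{P3.4} is never cited afterwards; the application in Subsection~\ref{S3.4} invokes only Proposition~\ref{P3.2}, applied to the fixed element $c$. So the word ``linear'' in the statement appears to be a slip with no downstream consequences. Your write-up should supply the existence and filtration claims exactly as you outlined, and either drop ``linear'' from the statement or note explicitly that it does not hold.
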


\subsection{Lie algebra $\bar{\c M}^f$ and 
the action of $\id _{\bar{\c L}}\otimes h(p)$} \label{S3.4} 

Here we study the action of $\id _{\bar{\c L}}
\otimes h(p)$ on 
$\bar f=f\,\op{mod}\,\c M_{<p}(p-1)\in\bar{\c M}_{<p}$. 

Note that if $h_{<p}^0$ is the lift from the end of Subsection \ref{S2.3} 
then $h_{<p}^0(f)=c^0\circ (\Ad\,h_{<p}^0\otimes\id _{\c K_{<p}})f$, where 
$c^0\in\c N(1)\subset \c M(1)$, cf. the proof of Lemma \ref{L2.6} step b). 
 
Suppose $h_{<p}$ is any lift of $h$. Then we can use the existence of $l\in\c L=\c L(1)$ 
such that $h_{<p}=h_{<p}^0\eta _0^{-1}(l)$: if  
$(\id _{\c L}\otimes h_{<p})f=
c\circ (A\otimes \id _{\c K _{<p}})f$ then 
by Proposition \ref{P2.3}, 
$c=c^0\circ l\in\c L(1)_k+\c M(1)$. In other words, generally $c\notin\c N(1)$ but 
it always belongs to $\c L(1)_k+\c M(1)\subset\c M$.

Proceeding in $\bar{\c M}$ we have for 
$h(p)=h_{<p}|_{\c K(p)}$, 
$$(\id _{\bar{\c L}}\otimes h(p))\bar f=
\bar c\circ (\bar A\otimes \id _{\c K(p)})\bar f\, ,$$ 
where we set $\bar c=c\,\op{mod}\,\c M(p-1)\in\bar{\c M}$ and 
$\bar A=A\,\op{mod}\,\c L(p)=\op{Ad}\,h(p)=\wt{\exp}(\op{ad}\,h(p))$.

For $n\in\N $, let 
\begin{equation} \label{E3.2} (\id _{\c L}\otimes h_{<p}^n)f=c(n)\circ f(n)\, ,
\end{equation}
where  
$c(n)=(\id _{\c L}\otimes h^{n-1})(c\circ(A\otimes h^{-1})c\circ  
\dots \circ (A\otimes h^{-1})^{n-1}c )$  
and $f(n)=(A^n\otimes \id _{\c K_{<p}})f$. 

Proceeding similarly to Subsection \ref{S3.1} 
we obtain that 
$$\bar f(n):=f(n)\,\op{mod}\,\c M_{<p}(p-1)=
\sum _{i\geqslant 0}\bar f^{(i)}n^i\,,$$  
where $\bar f^{(0)}=\bar f$ and for all $1\leqslant i<p$, 
$\bar f^{(i)}=(\op{ad}^ih(p)\otimes\id _{\c K(p)})\bar f/i!\in $
$(\bar A\otimes\id _{\c K (p)}-\id _{\bar{\c M}_{<p}})^i
\bar{\c M}_{<p}\subset \bar{\c M}_{<p}(i)\,.$ 

Define the new filtration ${\c M}[i]$ on ${\c M}$ by setting $\c M[0]:=\c M$ and 
for $i\geqslant 1$, 
${\c M}[i]:=\c L(i)_k+\c M(i)$.  Consider 
the appropriate filtrations $\bar{\c M}[i]=\c M[i]\, \op{mod}\,\c M(p-1)$ on 
$\bar{\c M}$ and $\bar{\c M}_{<p}[i]=\bar{\c M}[i]+\bar{\c M}_{<p}(i)$ on $\bar{\c M}_{<p}$. 

\begin{Prop} \label{P3.5} There are $c_i\in\c M[i]$ such that 
for all $n\in\N $, $c(n)\equiv \sum _{i\geqslant 1}c_in^i\,\op{mod}\,\c M(p-1)$. 
\end{Prop}

\begin{proof} Consider the Lie algebra $\mathfrak{L}=\bar{\c M}$ with 
filtration $\mathfrak{L}^i:=\bar{\c M}[i]$. Clearly, $\mathfrak{L}$ and its 
filtration $\{\mathfrak{L}^i\}_{i\geqslant 0}$ satisfy the assumptions from 
Subsection \ref{S3.3} and $\bar c\in \mathfrak{L}^1$ (cf. the beginning of this Subsection).  
It remains to apply  Proposition \ref{P3.2}. 
\end{proof}

\begin{Cor} \label{C3.6} For all $n\in\N $, 
$$(\id _{\bar{\c L}}\otimes h(p)^n)\bar f=\sum _{i\geqslant 0}\bar f_in^i\,,$$
where $\bar f_0=\bar f$ and all $\bar f_i\in\bar{\c M}_{<p}[i]$. 
\end{Cor} 

\begin{definition}
$\bar{\c M}^f$ is the minimal Lie subalgebra in $\bar{\c M}_{<p}$ containing 
$\bar{\c M}$ and all the elements 
$(\op{Ad}^n\,h(p)\otimes\id _{\c K(p)})\bar f$ with $n\in\N $.
\end{definition}

Note that $\bar{\c M}^f$ does not depend on a choice of the lift $h(p)$. 
We can also define $\bar{\c M}^f$ as the minimal subalgebra in $\bar{\c M}_{<p}$ containing 
$\bar{\c M}$ and all $\bar f^{(i)}$, $1\leqslant i<p$. Clearly, 
$\id _{\bar{\c L}}\otimes h(p)$ acts on 
$\bar{\c M}^f$ (use that $A\otimes \id _{\c K(p)}$ and 
$\id _{\bar{\c L}}\otimes h(p)$ commute) and this action is completely determined by 
the knowledge of $(\id _{\bar{\c L}}\otimes h(p))\bar f$. Roughly speaking, 
$\bar{\c M}^f$ is much smaller than $\bar{\c M}_{<p}$ 
but it is still provided with a strict action of $\c G_h$. 
In addition,  the filtration $\bar{\c M}_{<p}[i]$ 
induces the $\c G_h$-equivariant filtration $\bar{\c M}^f[i]$ on $\bar{\c M}^f$, and for  
all $i$, $\bar f^{(i)}$ and $\bar f_i$ 
belong to $\bar{\c M}^f[i]$. 

Now we can apply the results of Subsection \ref{S3.2} and introduce  
the appropiate action   
$\id _{\bar{\c L}}\otimes h(p)^U:\bar{\c M}^f\To \bar{\c M}^f\otimes\F_p[[U]]$ 
of $\mathbb{G}_{a,\F _p}$ on $\bar{\c M}^f$. 
This action appears as the extension of the action 
$\id _{\bar{\c L}}\otimes h^U:\bar{\c M}\To \bar{\c M}\otimes\F _p[[U]]$ 
from Subsection \ref{S3.1} by setting 
$$(\id _{\bar{\c L}}\otimes h(p)^U)\bar f=
\sum _{i\geqslant 0}\bar f_i\otimes U^i\, .$$ 
By Proposition \ref{P3.1} the action of $h(p)$ is completely determined by 
the differential $d(\id _{\bar{\c L}}\otimes h(p)^U)$. 
\medskip 

\subsection{Differential $d(\id _{\bar{\c L}}\otimes h(p)^U)$} \label{S3.5}

Using the calculations from Subsection \ref{S3.4} we obtain 
$$\id _{\bar{\c L}}\otimes h(p)^U:\bar f\mapsto \bar c(U)\circ \bar f(U)\, ,$$
where $\bar c(U)=\sum _{i\geqslant 1}c_iU^i\,\op{mod}\,\c M(p-1)$ and 
$\bar f(U)=\bar f+\sum _{i\geqslant 1}\bar f^{(i)}U^i$. 

It makes sense to introduce the formal operator 
$$\op{Ad}^Uh(p):\bar{\c L}\To \bar{\c L}\otimes\F _p[[U]]$$ 
such that for any $l\in\bar{\c L}=\c L/\c L(p)$, 
$\op{Ad}^Uh(p)l=\sum _{i\geqslant 0}l_iU^i$, where 
$l_i=0$ if $i\geqslant p$ and for any $n\in\N $, 
$\op{Ad}^Uh(p)|_{U=n}=\op{Ad}^nh(p)$. Similarly to 
Subsection \ref{S3.2}, for all $i\geqslant 0$, 
$l_i=\op{ad}^ih(p)(l)/i!$ and 
$\Ad ^Uh(p)\equiv \id _{\bar{\c L}} +\op{ad}h(p)\,U\,
\op{mod}\,U^2$. 
This gives the following formal identity
(note $\sigma U=U$):  
\begin{equation} \label{E3.3} 
(\id _{\bar{\c L}}\otimes h^U)(e)\circ \bar c(U)= 
(\sigma \bar c)(U)\circ \sum _{a\in\Z ^0(p)}
t^{-a}(\Ad ^Uh(p)\otimes\id _{k})D_{a0}\, .
\end{equation} 
The proof formally goes along the lines of the proof 
that $(c,A)$ satisfies identity \eqref{E2.1} 
in Proposition \ref{P2.3}.

As a result,  we can specify $(\id _{\bar{\c L}}
\otimes h(p)^U)\bar f$ by the following linearization 
of \eqref{E3.3}. 
Recall, cf. Subsection \ref{S2.1}, that   
$$h(t)=tE(\omega _h^p)\equiv t\wt{\exp}(\omega _h^p)
\op{mod}\,t^{pc_0+1}\, ,$$
where $\omega _h^p=\sum _{i\geqslant 0}A_i(h)t^{c_0+pi}$, 
all $A_i(h)\in k$ and $A_0(h)\ne 0$.  Then by Proposition 
\ref{P2.1}, 
$h^U(t)\equiv t\wt{\exp}(U\omega _h^p)\,\op{mod}\,t^{pc_0+1}$  
and  
$$d(\id _{\bar{\c L}}\otimes h^U)e = 
-\sum _{a\in\Z ^0(p)}t^{-a}
\omega _h^paD_{a0}\otimes U\,\op{mod}\,\c M(p-1)\,.$$

\begin{Prop} \label{P3.7}
We have the following recurrent congruence modulo $\c M(p-1)$
for $\bar c_1=c_1\,\op{mod}\,\c M(p-1)$ and
$V_{a0}:=\op{ad}\,h(p)(D_{a0})\,\op{mod}\,\c L(p)_k$, $a\in\Z ^0(p)$,   
\begin{equation} \label{E3.4} 
\sigma \bar c_1-\bar c_1+\sum _{a\in \Z ^0(p)}t^{-a}V_{a0}\equiv 
\end{equation}

$$-\sum _{k\geqslant 1}\,\frac{1}{k!}\,t^{-(a_1+\dots +a_k)}
\omega _h ^p
[\dots [a_1D_{a_10},D_{a_20}],\dots ,D_{a_k0}]$$
$$-\sum _{k\geqslant 2}\frac{1}{k!}t^{-(a_1+\dots +a_k)}
[\dots [V_{a_10},D_{a_20}],\dots ,D_{a_k0}]$$
$$-\sum _{k\geqslant 1}\,\frac{1}{k!}\,t^{-(a_1+\dots +a_k)}
[\dots [\sigma \bar c_1,D_{a_10}],\dots ,D_{a_k0}]$$
(the indices  
$a_1,\dots ,a_k$ in all above sums run over $\Z ^0(p)$).  
 \end{Prop}

\begin{proof} The following properties are very well-known from the 
Campbell-Hausdorff theory. 
Suppose $X$ and $Y$ are generators of a free Lie $\Q [[U]]$-algebra. Then  
$$ 
 (UY)\circ X \equiv X\circ \left (U\sum _{k\geqslant 0} 
\frac{1}{k!}[\dots [Y,\underset{k\text{ times }}
{\underbrace{X],\dots ,X}}]\right )\, ,
$$ 
$$
X+UY\equiv X\circ 
\left (U\sum _{k\geqslant 1}\frac{1}{k!}
[\dots [Y,\underset{k-1\text{ times }}{\underbrace{X],
\dots ,X]}}\right )\,\op{mod}\,U^2
$$

For the first formula cf. \cite{Bou}, Ch.II, 
Section 6.5 or Exercise 1 for Ch.II, Section 6.  
The second 
congruence is much more important; it can be 
extracted from \cite{Bou}, Ch.II, Section 6.5, Prop.5 or 
Ch.II, Exercise 3 for Section 6.

Using that the coefficients in the above formulas 
are $p$-integral in degrees $<p$ we can use them 
in the context of Lie $\F_p$-algebras in the 
following form (where $E_0(x)=(\wt{\exp}(x)-1)/x$): 
 \begin{equation} \label{E3.5} 
 (UY)\circ X = X\circ \left (U\wt{\exp}(\op{ad}X)(Y)\right )\,\op{mod}\,U^2
\end{equation} 
\begin{equation} \label {E3.6}
X+UY=X\circ (U\,E_0(\op{ad}X)(Y))
\,\op{mod}\,U^2
\end{equation}
\begin{remark} a) In the above formulas and 
this paper we use the following notation: 
$(\op{ad}X)Y=[Y,X]$ and $(\op{Ad}X)Y=(-X)\circ Y\circ X$  
(this notation is opposite to the notation from  
\cite{Bou}). 

b) Note the following easy rules: 
$X\circ (Y+U^2Z)\equiv X\circ Y\,\op{mod}\,U^2$ and 
$(UX)\circ (UY)\equiv U(X+Y)\,\op{mod}\,U^2$. 
\end{remark} 
\medskip 

Then for the  left-hand-side (LHS) of \eqref{E3.3} modulo $U^2$ we have: 
$$(e+d(\id _{\bar{\c L}}\otimes h^U)e+\dots )\circ (\bar c_1U+\dots )\equiv $$
$$e\circ E_0(\op{ad}e)(d(\id _{\bar{\c L}}\otimes h^U)e)\circ 
(\bar c_1U+\dots )\equiv $$

$$e\circ (E_0(\op{ad}e)(d(\id _{\bar{\c L}}\otimes h^U)e)
+\bar c_1U)$$

Similarly, the RHS of \eqref{E3.3} modulo $U^2$ appears in the following form  
$$((\sigma \bar c_1)U+\dots )\circ 
\left (e+U\sum _{a\in\Z ^0(p)}t^{-a}V_{a0}+\dots \right )\equiv $$
$$e\circ \left (U\sum _{a\in\Z ^0(p)}E_0(\op{ad}e)(t^{-a}V_{a0})+
U\wt{\exp}(\op{ad}e)(\sigma \bar c_1)\right )$$

It remains to cancel by $e$ and equalize the coefficients for $U$. 
\end{proof}

 Any solution $\{\bar c_1, \{V_{a0}\ |\ a\in\Z ^0(p)\}\}$ 
of congruence \eqref{E3.4} modulo $\c M(p-1)$ can be uniquely lifted to a 
 solution  
$\{c_1, \{V_{a0}\ |\ a\in\Z ^0(p)\}\}$ of \eqref{E3.4} 
 modulo $\c L(p)_{\c K}\subset \c M(p-1)$. This follows 
easily from Lemma \ref{L2.2}b) because $\sigma $ 
 is nilpotent on $\c M(p-1)\,\op{mod}\,\c L(p)_{\c K}$ 
(use that $\c M(p-1)\subset\c L_{\m }+\c L(p)_{\c K}$). 
 In other words, we have a unique lift of 
 $$\bar c_1\in\c M\,\op{mod}\,\c M(p-1)\subset \c L_{\c K}\,\op{mod}\,\c M(p-1)$$ 
 to $c_1\in \c L_{\c K}\,\op{mod}\,\c L(p)_{\c K}$. 
 This allows us to prove that 
the number of different solutions $\{\bar c_1, \{V_{a0}\ |\ a\in\Z ^0(p)\}\}$  of \eqref{E3.4} 
is $|\c L/\c L(p)|$. Indeed, we can arrange 
the recurrent procedure of solving 
 congruences \eqref{E3.4} modulo $\c L(s)_{\c K}$, where $s=1,\dots ,p$. When 
 $s=1$ we have only trivial solution. 
Then each solution modulo $\c L(s)_{\c K}$ gives a 
 unique extension for all $V_a\,\op{mod}\,\c L(s+1)_k$ and $|\c L(s)/\c L(s+1)|$ 
 different extensions for $c_1\,\op{mod}\,\c L(s+1)_{\c K}$. 
(Compare with the calculations from Subsection \ref{S2.3}.) 
Finally, the number of different solutions of congruence 
\eqref{E3.4} is equal to the number of different 
 lifts of $h$ to $\Aut \,\c K(p)$ which coincides with 
the order $|\Gal (\c K_{<p}^{G(\c L(p))}/\c K)|=|\bar{\c L}|$. 
This is not very much surprising because the lift $h(p)$ is completely 
 determined by $\bar f_1U=d(\id _{\bar{\c L}}\otimes h(p)^U))\bar f$ and $\bar f_1$ 
is uniquely recovered from the knowledge of the appropriate solution 
$\{\bar c_1,\{V_{a0}\ |\ a\in\Z ^0(p)\}\}$ due to 
the following proposition \ref{P3.8} below.  
 
 Recall that for $m\geqslant 0$, 
 $$B_m=\sum _{0\leqslant v\leqslant k\leqslant m}
 (-1)^v\binom{k}v\frac{v^m}{k+1}$$ 
 are the Bernoulli numbers. One of their well-known properties is 
 that 
 $$x/(1-\exp(-x))=\sum _{m\geqslant 0}B_m(-x)^m/m!\,.$$
 
 \begin{Prop} \label{P3.8} 
 $d(\id _{\bar{\c L}}\otimes h(p)^U)\bar f=\bar f_1\otimes U$, where 
 $$\bar f_1=(\op{ad}\,h(p)\otimes \id _{\c K(p)})\bar f+
 \sum _{n\geqslant 0}(-1)^n(B_n/n!)
[\dots [\bar c_1,\underset{n\ \text{times}}{\underbrace {\bar f],\dots ,\bar f}}]\,.$$
   \end{Prop}

\begin{proof} 
In earlier notation we have modulo $U^2$ (use \eqref{E3.5} and \eqref{E3.6}):
$$\left (\id _{\bar{\c L}}\otimes h(p)^U\right )\bar f\equiv 
\bar f+\bar f_1U\equiv (\bar c_1U)\circ (\bar f+\bar f^{(1)}U) $$ 
$$\equiv 
(\bar f+\bar f^{(1)}U)\circ (U\wt{\exp}(\op{ad}\,\bar f)\bar c_1)$$
$$\equiv \bar f\circ (E_0(\op{ad}\,\bar f)\bar f^{(1)}U+\wt{\exp}(\op{ad}\,\bar f)\bar c_1U)$$
$$\equiv \bar f+(\bar f^{(1)}+E_0(\op{ad}\,\bar f)^{-1}(\wt{\exp}(\op{ad}\,\bar f))\bar c_1)U\,.$$
It remains to note that $E_0(x)^{-1}\exp (x)=x/(1-\exp (-x))$. 
\end{proof}

\begin{remark} a) As we already mentioned the above proposition 
implies that the knowledge of the differential $\bar c_1$ of $\bar c$ is sufficient to recover 
the action of $h(p)$ on $\bar f$. In other words, we recover 
the element $(\id _{\bar{\c L}}\otimes h(p)^U)\bar f=
\bar c(U)\circ \bar f(U)$ and therefore, the element $\bar c$. 
This fact can be obtained directly by  establishing a cocycle relation for 
$\bar c(U)$ and verifying that this relation is sufficient to recover $\bar c(U)$ from 
$\bar c_1$.

b)Suppose $\c L'$ is an ideal of $\c L$ such that $\c L'\supset\c L(p)$. Then 
 we can repeat the above arguments to prove that the solutions of \eqref{E3.4} 
 modulo $\c L'_{\c K}$ 
 describe uniquely the lifts of $h$ to automorphisms of $\c K_{<p}^{G(\c L')}$. 
\end{remark}

\subsection{Special cases} \label{S3.6} 

Recurrent  relation \eqref{E3.4} describes explicitly step by step  
the action of the lift $h(p)$. We can agree, for example, to 
find at each step the appropriate values of $\bar c_1$ and $V_{a0}$ 
by the use of the operators 
$\c R$ and $\c S$ from Subsection \ref{S2.2}. 
This will specify uniquely the lift $h(p)$ together with its action by conjugation on 
$\bar{\c L}=\c L/\c L(p)$ and, therefore, will determine  
the structure of $L_h$ (and of the group $\c G_{h}$). 

Let (as earlier) $\omega _h^p=\sum _{i\geqslant 0}\,A _i(h)t^{c_0+pi}$, where 
all $A _i(h)\in k$ and $A _0(h)\ne 0$. Then \eqref{E3.4} 
modulo $C_2(\bar{\c L})_{\c K}+\c M(p-1)$ gives the following congruence 
\begin{equation} \label{E3.7} \sigma c_1-c_1+\sum _{a\in\Z ^0(p)}t^{-a}V_{a0}\equiv 
-\sum _{\substack{a\in \Z ^0(p)\\ i\geqslant 0}}
A_i(h)t^{c_0+pi-a}aD_{a0}\, .
\end{equation}

Applying operator $\c R$, cf. Lemma \ref{L2.2}, we obtain:
\medskip 

$\bullet $\ $V_{00}=(\op{ad}\,h(p))D_{00}=\alpha _0
\op{ad}\,h(p)D_0\in \alpha _0C_2(\bar{\c L})$;
\medskip 

$\bullet $\ for all $b\in\Z ^+(p)$, 
$$V_{b0}=(\op{ad}\,h(p))D_{b0} 
\equiv -\sum _{i\geqslant 0}A _i(h)bD_{b+c_0+pi,0}\,\op{mod}\,C_2(\bar{\c L})_k\, .$$

The second relation means that 
all generators of $\bar{\c L}_{k}$ of the form 
$D_{an}$ with $a>c_0$ can be eliminated from 
the minimal system of  generators of $L_{h,k}$. Indeed,  
because $A_0(h)\ne 0$, all $D_{b+c_0,0}$ belong to the 
ideal of second commutators $C_2(L_h)_k=((\op{ad}\, h(p))\c L_k+C_2(\c L_k))/\c L(p)_k$, and  
for any $n\in\Z /N_0$, all $D_{b+c_0,n}=\sigma ^nD_{b+c_0,0}$ also belong to $C_2(L_h)_k$. 
The first  relation 
then means that $L_h$ has only one relation 
with respect to any minimal set of generators. 
This terminology formally makes sense because  
in the category of Lie $\F _p$-algebras of nilpotent 
class $<p$ the algebras of the form 
$\mathfrak{L}/C_p(\mathfrak{L})$, where $\mathfrak{L}$ is 
a free Lie $\F _p$-algebra, play a role of 
free objects. The same remark also can be 
used for the category of, say, 
$p$-groups of period $p$ and of nilpotent class $<p$. Therefore, 
$\c G_{h}$ can be treated as an object of this category 
with finitely many generators and 
one relation.

As an illustration of Proposition \ref{P3.7}, use the relation 
\eqref{E3.7} modulo $\c L(2)_{\c K}+\c M(p-1)$ and make  the 
next central step to obtain the following explicit formulas for 
$V_{a0}$ modulo $\c L(3)_k=C_3(L_h)_k$ (the elements $\c F_{\gamma ,-N}^0$ 
are generators of ramification ideals introduced in Subsection \ref{S1.4}).  

\begin{Prop} \label{P3.10} We have  the following congruences modulo $\c L(3)_k$:

$$V_{00}\equiv 
-\alpha _0\sum _{\substack{i\geqslant 0
\\ 0\leqslant n<N_0}}
\sigma ^n(A_{i}(h))\sigma ^n(\c F^0_{c_0+pi,0})\, ,
$$
and for all $a\in\Z ^+(p)$, 
$$V_{a0}\equiv 
-\sum _{\substack{n\geqslant 1 \\ i\geqslant 0}}
\sigma ^{n}(A_{i}(h)\c F^0_{c_0+pi+a/p^n,-n})
-\sum _{\substack{m\geqslant 0 \\ i\geqslant 0}}
\sigma ^{-m}(A_{i}(h)\c F^0_{c_0+pi+ap^m,0})
\,.$$
\end{Prop} 
\medskip 

Before sketching the proof of this proposition we 
explain why the sums in the last formula are finite. 

\begin{Prop} \label{P3.11} Suppose $a\in\Z ^0(p)$. Then: 
\medskip 

{\rm a)}\ for any $N,m\geqslant 0$, 
$\c F^0_{c_0+pi+ap^m,-N}\equiv\c F^0_{c_0+pi+ap^m,0}\,\op{mod}\,\c L(3)_k$; 
\medskip

{\rm b)}\ for any $N\geqslant n\geqslant 1$, $\c F^0_{c_0+pi+a/p^n,-N}
\equiv\c F^0_{c_0+pi+a/p^n,-n}\,\op{mod}\,\c L(3)_k$;
\medskip 

{\rm c)}\ if $m\geqslant 0$ and $c_0+pi+ap^m>2c_0-1$  
then  $\c F^0_{c_0+pi+ap^m,0}\in\c L(3)_k$;
\medskip 

{\rm d)}\ if $n\in\N $ and $(c_0-1)(1+p^{-n})<c_0$ then $\c F^0_{c_0+pi+a/p^n,-n}\in\c L(3)_k$. 
\end{Prop} 
\medskip 

\begin{proof}
 a)\ If it is false then $\c F^0_{c_0+pi+ap^m,-N}$ 
 should contain a term of the form 
$a_1[D_{a_10},D_{a_2n_2}]$, where $n_2\leqslant -1$ and $a_1+a_2p^{n_2}=c_0+pi+ap^m\in\Z $;  
this implies $a_2=0$ and $a_1=c_0+pi+ap^m\geqslant c_0$; therefore, 
$D_{a_10}\in\c L(2)_k$ and our commutator belongs to $\c L(3)_k$. 

b)\ It is obvious if $a\ne 0$ -- in this case both elements don't contain 
linear terms and for any second commutator $a_1[D_{a_10},D_{a_2n_2}]$ we should have 
$a_2\ne 0$ and $n_2=-n$. If $a=0$ then cf. a).  

c)\ $\c F^0_{c_0+pi+ap^m,0}$ can contain a linear term only if $m=0$ which then must be 
equal to $aD_{c_0+pi+a,0}$, but then $c_0+pi+a>2c_0$ and it belongs to $\c L(3)_k$;  
if we have a second commutator $a_1[D_{a_10},D_{a_2n_2}]$ then the condition 
$a_1+p^{n_2}a_2> 2c_0-1$ implies also that this commutator belongs to $\c L(3)_k$.

d)\ In this case there is no linear term, and 
any appeared second commutator $a_1[D_{a_10},D_{a_2n_2}]$ should be 
such that $n_2=-n$, $a_1, a_2\leqslant c_0-1$ but then 
$a_1+a_2p^{n_2}$ will be less than $c_0<c_0+pi+a/p^n$. 
\end{proof}

\begin{proof} [Proof of Proposition \ref{P3.10}]  

From \eqref{E3.7} we obtain (apply the operator $\c S$ from Subsection \ref{S2.2})

$$c_1\equiv  
\sum _{\substack{0<a<c_0+pi\\ i,n\geqslant 0}}
\sigma ^nA_i(h)t^{p^n(c_0+pi-a)}aD_{an}\,\op{mod}\,\c L(2)_{\c K}+\c M(p-1) .$$
(Modulo $\c L(2)_{\c K}$ we can ignore all terms with $a>c_0$.) 
Then the right-hand side of \eqref{E3.4} modulo $\c L(3)_{\c K}+\c M(p-1)$ appears as 

$$-\sum _{a,i}A_i(h)t^{c_0+pi-a}aD_{a0}-\frac{1}{2}\sum _{a_1,a_2,i}
A_i(h)t^{c_0+pi-a_1-a_2}a_1[D_{a_10},D_{a_20}]$$

$$+\frac{1}{2}\sum _{a_1,a_2,i}A_i(h)t^{-(a_1+a_2)}a_1[D_{a_1+c_0+pi,0}, D_{a_20}]$$
$$-\sum _{\substack{a_1,a_2,n,i\\0<a_1<c_0+pi}}\sigma ^{n}(A_i(h))
t^{p^{n}(c_0+pi-a_1)-a_2}a_1[D_{a_1,n},D_{a_20}]$$

In the above sums the indices $a,a_1,a_2$ run over $\Z ^0(p)$, $i\geqslant 0$ and  
$n\geqslant 1$.  
The third sum can be ignored because all $D_{a_1+c_0+pi,0}\in C_2(L_h)_k$ 
and for the similar reason we can ignore the restriction $0<a_1<c_0+pi$ in the last sum. 

Now note that the terms from the first line can be grouped as follows:
\medskip 

--- the constant terms (i.e. the coefficients for $t^0=1$) appear as 
$$-\frac{1}{2}\sum _{i}A_i(h)\sum _{a_1+a_2=c_0+pi}
a_1[D_{a_10},D_{a_20}]=-\sum _{i}A_i(h)\c F^0_{c_0+pi,0}\,;$$

--- the remaining terms are grouped with respect to the 
condition $a=c_0+pi+b$ or $a_1+a_2=c_0+pi+bp^m$, where 
$b\in\Z ^+(p)$ and $m\geqslant 0$, and appear as 
$$-\sum _{i}A_i(h)\sum _{b,m} t^{-bp^m}\c F^0_{c_0+pi+bp^m,0}\,;$$

The terms from the last line are grouped (modulo $\c L(3)_{\c K}$) with respect to the 
condition $a_1+a_2/p^n=c_0+pi+b/p^n $, where $b\in\Z ^+(p)$ and $n\geqslant 1$, and appear as 
$$-\sum _i\sigma ^n(A_i(h))\sum _{a}t^{-a}\sigma ^n\c F^0_{c_0+pi+a/p^n,-n}\,.$$

It remains to recover the values of $V_b$ by applying  
the operator $\c R$ from Subsection \ref{S2.2}. 
\end{proof}

\section{Arithmetical lifts} \label{S4} 

Recall that the lifts $h_{<p}\in\Aut\,\c K_{<p}$ of 
$h\in\op{Aut}\c K$ generate  
the group $\wt{\c G}_h\subset\Aut (\c K_{<p})$. 
The images $h(p)$ 
of all $h_{<p}$ generate the group  $\wt{\c G}_h/G(\c L(p))=
\wt{\c G}_h/C_p(\wt{\c G}_h)\subset \Aut\,\c K(p)$ 
and by results of Section \ref{S3}, 
can be described quite efficiently via the 
differentials $d(\id _{\bar{\c L}}\otimes h(p)^U)$. 
In this Section we introduce the concept of arithmetical lift $h_{<p}$ of $h$ and  
prove that this property depends only on the image $h(p)$ of $h_{<p}$. 
We also obtain a characterization 
of this property in terms related to 
the differentials $d(\id _{\bar{\c L}}\otimes h(p)^U)$.

\subsection{Review of ramification theory} \label{S4.1} 
The following brief sketch of the ramification theory of 
continuous automorphisms of complete 
discrete valuation fields with finite residue field of characteristic $p$ 
(we need only this case) is based 
on the papers \cite{De, Wtb1, Wtb2}. 

Let $\c E$ be a basic complete discrete valuation field with finite  
residue field $k_{\c E}$. Let   
$R_0(\c E)$ be the completion of a  separable closure $\c E_{sep}$ of $\c E$. 
Note that in the characteristic 0 case, we have  $R_0(\c E)=\mathbb{C} _p$,  
and in the characteristic $p$ case, we have  $R_0(\c E)=\op{Frac}R:=R_0$ is 
the field of fractions of Fontaine's ring $R=\varprojlim O_{\mathbb{C}_p}/p$ 
(the projective limit is taken with respect to the transition maps induced by 
taking $p$-th powers). 

 Denote by  
$v_{\c E}$ the unique extension of the normalized valuation on $\c E$ to $R_0$.  
Let $\c I$ be the group of all continuous automorphisms of $R_0$ 
which are compatible with $v_{\c E}$ and induce the identity map on 
the residue field of $R_0$. 

Agree that all fields below $E,F,L$ etc, are finite extensions of $\c E$ in $\c E_{sep}$ 
and use the appropriate notation $v_E$, $k_E$, etc. 
Let $\m _E$ be the maximal ideal of the valuation ring of $E$.  
Note that the inertia subgroup $\Gamma ^0_E$ of $\Gamma _E=\Gal (\c E_{sep}/E)$ 
is a subgroup in $\c I$. 

Let $\c I_E=\{\iota |_E\ |\ \iota\in\c I\}$. 

For $g\in\c I_E$, let $v(g)=\op{min}\,\left\{ v_E(g(a)-a)\ |\ a\in\m _E\right\}-1$. 

For $x\geqslant 0$, set 
$\c I_{E,x}=\{g\in\c I_E\ |\ v(g)\geqslant x\}\,.$

For a field extension $F/E$, let  
$\c I_{F/E}=\{\iota\in\c I_F\ |\ \iota |_E=\id _E\}$. 
For $x\geqslant 0$, let 
$$\c I_{F/E,x}=\c I_{F,x}\bigcap\c I_{F/E}\,.$$ 

If $\iota _1,\iota _2\in \c I_{F/E}$ and $x\geqslant 0$ 
then $\iota _1$ and $\iota _2$ are {\it $x$-equivalent} iff 
for any $a\in \m_F$,  $v_F(\iota _1(a)-\iota _2(a))\geqslant 1+x$. 
Denote by $(\c I_{F/E}:\c I_{F/E,x})$ 
the number of 
$x$-equivalent classes in $\c I_{F/E}$.  
Then the Herbrand function for $F/E$ can be defined for all $x\geqslant 0$, 
as 
$\varphi _{F/E}(x)=\int _0^x(\c I_{F/E}:\c I_{F/E,x})^{-1}dx$. 
This function has the following properties:
\medskip 

$\bullet $\ $\varphi _{F/E}$ is a piece-wise linear function   
with finitely many edges;
\medskip 

$\bullet $\ if $L\supset F\supset E$ is a tower of finite field extensions then 
for any $x\geqslant 0$, $\varphi _{L/E}(x)=\varphi _{F/E}(\varphi _{L/F}(x))$; 
\medskip 

$\bullet $\ the last edge point of the graph of $\varphi _{F/E}$ is 
$(x(F/E), v(F/E))$, where 
$$x(F/E)=\op{inf}\,\left\{x\geqslant 0\ |\ (\c I_{F/E}:\c I_{F/E,x})=|\c I_{F/E}|\right\}$$
is the largest lower and $v(F/E)=\varphi _{F/E}(x(F/E))$ 
is the largest upper ramification numbers for the extension $F/E$.
\medskip  

The following proposition is just a direct adjustment 
of the appropriate fact from the classical 
ramification theory for finite Galois extensions. 

\begin{Prop}\label{P4.1}  Suppose $g\in\c I_{E}$ and 
$v(g)=y$. Then 
$$\max\{v(f)\ |\ f\in\c I_F,\ f|_E=g\}=\varphi ^{-1}_{F/E}(y)\, .$$ 
\end{Prop}

\begin{proof} We can assume that $F/E$ is totally ramified of degree $d$. 

Suppose $\theta $ is a uniformizing element in $F$ and $P(T)\in E[T]$ 
is its minimal monic polynomial over $E$. Then $P(T)=T^d+a_1T^{d-1}+\dots +a_d$ is an 
Eisenstein polynomial and $v(g)=v_E(g(a_d)-a_d)-1=y$. 

Note that for all $1\leqslant i< d$, $v_E(g(a_i)\theta ^{d-i}-a_i\theta ^{d-i})>
v_E(g(a_d)-a_d)$, Therefore, 
$v_E(g_*P(\theta ))=v_E(g_*(P)(\theta )-P(\theta ))=1+y$.

Let $\theta _1, \dots ,\theta _d$ be 
all roots of $g_*P(T)$ in $\hat E_{sep}$. 
Then all $d$ different lifts $f_i$ of $g$ to $F$ are uniquely determined by 
the condition $f_i(\theta )=\theta _i$, $i=1,\dots ,d$. 
Clearly, $v(f_i)=v_F(\theta -\theta _i)-1$. 

Assume that $x=v(f_1)$ is maximal, i.e. 
$1+x\geqslant v_F(\theta -\theta _i)$ for 
all $i$. It remains to prove that $y=\varphi _{F/E}(x)$. 

Let $A_i:=v_F(\theta _i-\theta _1)-1\geqslant 0$. Note $A_1=+\infty $.  
Then 
$$v_F(g_*P(\theta ))=\sum _{1\leqslant i\leqslant d}v_F(\theta -\theta _i)=
\sum _{1\leqslant i\leqslant d}\min\{1+x, 1+A_i\}=d+\varphi (x)$$

The function 
$\varphi (x)=\sum _{1\leqslant i\leqslant d}\min\{x, A_i\}$ is peace-wise linear, 
$\varphi (0)=0$ and if $x$ is different from all $A_i$ then 
$$\varphi '(x)=|\{A_i\ |\ A_i>x\}|=|\c I_{F/E,x}|=(\c I_{F/E}:
\c I_{F/E, x})^{-1}d=d\varphi '_{F/E}(x)\, .$$ 

Therefore, $\varphi (x)=d\varphi _{F/E}(x)$ and, finally, 
$1+y=v_E(g_*P(\theta ))=$ 
$d^{-1}v_F(g_*P(\theta ))=d^{-1}(d+d\varphi _{F/E}(x))=1+\varphi _{F/E}(x).$
\end{proof}

\begin{Cor} \label{C4.2} 
The restriction $\c I_F\To\c I_E$ given by the correspondence 
$f\mapsto g:=f|_E$ defines for any $x_0\geqslant 0$, the surjection 
$\c I_{F,x_0}\To\c I_{E,y_0}$, where $y_0=\varphi _{F/E}(x_0)$. 
\end{Cor}

\begin{proof} Let $f\in\c I_{F,x_0}$ and $v(g)=y$. 
By Proposition \ref{P4.1},  
$x_0\leqslant v(f)\leqslant\varphi ^{-1}_{F/E}(y)$. This implies that 
$y_0\leqslant y$, i.e. $g\in\c I_{E,y_0}$. 

On the other hand, if $g\in\c I_{E,y_0}$ then $v(g)=y\geqslant y_0$ and by 
Proposition \ref{P4.1} there is $f\in\c I_{F,\varphi ^{-1}_{F/E}(y)}\subset\c I_{F,x_0}$ 
such that $g=f|_E$. 
\end{proof}

\begin{definition}
 The ramification filtration $\{\c I_{/E}^{(y)}\}_{y\geqslant 0}$ on $\c I$ with 
 the upper numbering over $E$ is a decreasing sequence of the subsets 
 $\c I_{/E}^{(y)}\subset \c I$ for all $y\geqslant 0$, such that 
 $$\c I_{/E}^{(y)}=\{\iota\in\c I\ |\ \forall\, F/E,\ \iota |_F
\in\c I_{F,\varphi ^{-1}_{F/E}(y)}\}\,.$$
\end{definition}

Note that for any $y\geqslant 0$, $\c I_{/E}^{(y)}=\c I_{/F}^{(y_F)}$, 
where $\varphi _{F/E}(y_F)=y$. Also, 
$\Gamma _E^{(y)}:=\Gamma _E\cap \c I_{/E}^{(y)}$ is the usual 
higher ramification subgroup $\Gamma ^{(y)}_E$ of $\Gamma _E$ with 
the upper number $y$ from \cite{Se1}.  
The largest ramification number $v(F/E)$ is characterized by the following property: 
\medskip 

$\bullet $\  {\it the ramification subgroup 
$\Gamma _E^{(y)}$ acts trivially on $F$ iff $y>v(F/E)$.} 
\medskip 

\subsection{Arithmetical lifts} \label{S4.2} 
Use the notation from Subsection \ref{S4.1}.

\begin{definition} For a field extension $F/E$ we say that $f\in\c I_F$ is 
arithmetical over $E$ (or $f$ is an arithmetical lift of $g=f|_E$) if 
$v(g)=\varphi _{F/E}(v(f))$. Equivalently, 
$f$ is arithmetical over $E$ if there is $\iota\in\c I^{(v(g))}_{/E}$ 
such that $\iota |_F=f$. 
\end{definition}

Note that Corollary \ref{C4.2} implies that $f$ is arithmetical 
over $E$ iff $v(f)=\max\,\{v(f')\ |\ 
f'\in\c I_F\,, f'|_E=g\}$. In particular, arithmetical lifts always exist.

Proposition \ref{P4.1} and Corollary \ref{C4.2} imply the following property.

\begin{Prop} \label{P4.3} Suppose $E\subset L\subset F$ are 
finite field extensions and $f\in\c I_F$. Then:

{\rm a)} $f$ is arithmetical over $E$ iff $f$ is arithmetical 
over $L$ and $f|_L$ is arithmetical over $E$;
\medskip 

{\rm b)} suppose $F/E$ is Galois, $f, f'\in\c I_F$ are such that 
$f|_E=f'|_E=g$ and $f$ is arithmetical over $E$; then $f'$ is arithmetical over $E$ iff 
there is $\tau\in\Gamma _E^{(v(g))}$ such that $f'=f\,(\tau |_F)$. 
\end{Prop}

\begin{proof} The part a) follows from the composition property of the 
 Herbrand function. 
As for the part b), note that $f=\iota |_F$, where 
$\iota \in\c I_{/E}^{(v(g))}$ and 
 there is $\tau\in\Gamma _E$ such that for $\iota ':=\iota\tau $, 
we have $f'=\iota '|_F$. 
We must verify that 

$\bullet $\ \  $\iota '\in\c I_{/E}^{(v(g))}$ iff $\tau\in\c I_{/E}^{(v(g))}\cap\Gamma _E=
 \Gamma _E^{(v(g))}$. 
\medskip 

Suppose $\iota '\in\c I^{(v(g))}_{/E}$. Then for any finite field extension $E'/E$, 
and  any $a\in \m _{E'}$, we have that 
$$\varepsilon ':=\varphi ^{-1}_{E'/E}(v(g))+1\leqslant v_{E'}(\iota '(a)-a)= 
v_{E'}(\iota (\tau a-a)+(\iota (a)-a))\, .$$
But $v_{E'}(\iota (a)-a)\geqslant \varepsilon '$ 
(use that $\iota\in\c I^{(v(g))}_{/E}$) implies  
$v_{E'}(\tau a-a)\geqslant \varepsilon '$ and, therefore, 
$\tau\in\Gamma _E^{(v(g))}$.

Inversely, if $\tau\in\Gamma _E^{(v(g))}$ and $a\in\m _{E'}$ then 
$v_{E'}(\tau a-a)\geqslant \varepsilon '$ and  
$v_{E'}(\iota '(a)-a)$ 
$=v_{E'}(\iota (\tau a-a)+\iota (a)-a)\geqslant \varepsilon '$,
 i.e. $\iota '\in\c I_{/E}^{(v(g))}.$
\end{proof}

As a direct  application of the above proposition note the following. 

Suppose $g\in\c I_E$, $v_g=v(g)$ and $\c E^{(v_g)}\subset\c E_{sep}$ 
is the subfield fixed by $\Gamma _E^{(v_g)}$. 
We shall call $f\in\c I$ arithmetical over $E$ if for any 
finite extension $F/E$ the restriction $f|_F$ 
is arithmetical over $E$. 

\begin{Cor}\ \label{C4.4} {\rm a)}\ $\iota \in\c I$ is 
arithmetical lift of $g=\iota |_E$ if and only if  
$\iota ^{(v_g)}:=\iota |_{\c E^{(v_g)}}$ is arithmetical over $E$;
\medskip 

{\rm b)}\ $\iota ^{(v_g)}$ is a unique arithmetical lift of $g$ to $\c E^{(v_g)}$. 
\end{Cor}

\begin{proof} 
 Suppose $F/E$ is Galois, $\Gal (F/E)=\Gamma $, $F^{(v_g)}=F^{\Gamma ^{(v_g)}}$,  
$f\in\c I_F$, $f|_E=g$ and $f|_{F^{(v_g)}}=f^{(v_g)}$.  

If $f$ is arithmetical over $E$ then by Proposition \ref{P4.3}a) 
$f^{(v_g)}$ is also arithmetical over $E$. 

Inversely, suppose 
$f^{(v_g)}$ is arithmetical over $E$ and $f'\in\c I_F$ is arithmetical lift of 
$f^{(v_g)}$ to $F$. Then there is 
$\tau\in\Gal (F/F^{(v_g)})=\Gamma ^{(v_g)}$ such that $f=f'\tau $ and by Proposition 
\ref{P4.3}b) $f$ is arithmetical over $E$. This proves a) of our proposition. 

Suppose $h,h'\in \c I_{F^{(v_g)}}$ are lifts of $g$.  Then there is 
$\tau\in \Gamma _{F^{(v_g)}}:=\Gal (F^{(v_g)}/E)$ such that 
$h'=h\tau $. If $h,h'$ are arithmetical over $E$ then by Proposition \ref{P4.3}b) 
$\tau\in\Gamma _{F^{(v_g)}}^{(v_g)}=\{e\}$ and $h=h'$.  
 \end{proof}

\subsection{Characterization of arithmetical lifts} \label{S4.3} 
Consider, as earlier, the field extension $\c K_{<p}/\c K$ 
and a lift $h_{<p}\in\Aut\,\c K_{<p}$ of $h$. 

Suppose $h_{<p}$ is arithmetical over $\c K$.

By Corollary \ref{C4.4}b) such  lift $h_{<p}$ is unique 
modulo the ramification subgroup $\c G_{<p}^{(c_0)}=G(\c L^{(c_0)})$ (note that $v(h)=c_0$). 
Therefore, we can characterize arithmetical lifts $h_{<p}$ by studying the action of 
$h_{<p}$ on 
$$f\,\op{mod}\,\c L^{(c_0)}_{\c K_{<p}}\in (\c L/\c L^{(c_0)})_{\c K^{(c_0)}}\, ,$$
where $\c K^{(c_0)}:=\c K_{<p}^{G(\c L^{(c_0)})}$, cf. Subsection \ref{S1.3}. 

The following proposition provides us with the opportunity 
to characterize arithmetical lifts $h_{<p}$ 
by working with $\bar f=f\,\op{mod}\, \c M_{<p}(p-1)$. 
(Use that  $\bar f$ allows us to control efficiently 
the lifts $h(p)=h_{<p}|_{\c K(p)}$ and Corollary \ref{C4.4}. )

\begin{Prop}\label{P4.5}
$\c L(p)\subset\c L^{(c_0)}$. 
\end{Prop} 

\begin{proof} Proposition follows easily from Lemma \ref{L4.7} below.
\end{proof} 

Note the following corollary. 

\begin{Cor} \label{C4.6}  
 $h_{<p}$ is arithmetical iff $h(p)$ 
is arithmetical (over $\c K$). 
\end{Cor} 

Indeed, use that both automorphisms are arithmetical over $\c K$ iff   
$h_{<p}|_{\c K^{(c_0)}}=h(p)|_{\c K^{(c_0)}}:=h^{(c_0)}$ 
is arithmetical over $\c K$.

\begin{Lem} \label{L4.7} 
 If $\op{wt}\, (D_{an})\geqslant s$, cf. Subsection \ref{S2.3}, then 
 $$D_{an}\in \c L_k^{(c_0)}+C_s(\c L_k)\, .$$ 
\end{Lem}

\begin{proof} [Proof of lemma] This lemma was proved in \cite{Ab1} but the proof is 
very short and we shall reproduce it. Recall that $\op{wt} (D_{an})\geqslant s$ means that 
$(s-1)c_0\leqslant a$. Use induction on $s$. 

If $s=1$ there is nothing to prove. 

Assume $s\geqslant 2$ and the lemma is proved for all $s'<s$. Consider  
$$\c F^0_{a,-N}=aD_{a0}+(\text{ commutators of order}\geqslant 2)\in\c L^{(c_0)}_k$$ 
from 
Subsection \ref{S1.3}. This element is a linear combination of 
the commutators of the form \ \ 
$a_1[\dots [D_{a_1n_1},D_{a_2n_2}],\dots ,D_{a_tn_t}]$, where 
\medskip 

--- $0=n_1\geqslant\dots \geqslant n_t\geqslant -N$;
\medskip 

--- $a=a_1p^{n_1}+\dots +a_tp^{n_t}$.
\medskip 

If for $1\leqslant i\leqslant t$, $\op{wt} (D_{a_in_i})=s_i$ then 
$a\leqslant a_1+\dots +a_t<(s_1+\dots +s_t)c_0$ and this implies that 
$s\leqslant s_1+\dots +s_t$. 

Suppose $t\geqslant 2$. Then $\op{wt}(D_{a_in_i})\geqslant\min\{s_i,s-1\}$ and 
by the inductive assumption our commutator belongs to 
$\c L_k^{(c_0)}+C_{s'}(\c L_k)$, where 
$$s'=\sum _{1\leqslant i\leqslant t}\min\{s_i,s-1\}
\geqslant\min\{s_1+\dots +s_t,s\}=s\,.$$ 
\end{proof}

As a result, the property for $h_{<p}$ to be 
arithmetical over $\c K$ can be stated in terms of 
the differential $(\id _{\bar{\c L}}\otimes h(p)^U)\bar f=
\bar f_1\otimes U$ or, equivalently 
in terms of $(\op{ad}\,h(p)\otimes \id _{\c K(p)})\bar f$ and the linear part 
$\bar c_1\in \bar{\c M}[1]$ of $\bar c(U)$, 
cf. Proposition \ref{P3.8}. 

Note that if $h_{<p}$ is arithmetical then for any $g\in \c G_{<p}$, $h_{<p}^{-1}\,g\,h_{<p}
\equiv g\,\op{mod}\,\c G^{(c_0)}$. (Indeed, $g^{-1}h_{<p}g$ is another lift of $h$ 
which is also arithmetical and, therefore, it coincides with $h_{<p}$ 
modulo $\c G_{<p}^{(c_0)}$.) Therefore, 
$\op{Ad}h_{<p}\equiv \id _{\c L}\,\op{mod}\c L^{(c_0)}$.  
 In particular, 
$$(\Ad h_{<p}\otimes \id _{\c K_{<p}})f\equiv 
f\,\op{mod}\,\c L_{\c K_{<p}}^{(c_0)}$$ 
is a necessary condition for $h_{<p}$ to be arithmetical. 
It is natural to expect that a sufficient condition 
for $h_{<p}$ to be arithmetical over $\c K$ requires additional condition which 
can be stated in terms of 
$\bar c_1\,\op{mod}\,\c L^{(c_0)}_{\c K}$, cf. 
Subsection \ref{S3.5}. Even more, we are going to establish this condition 
in terms related only to 
$c_1(0)\in\c L_k\,\op{mod}\,\c L^{(c_0)}_k$, where we set 
$\bar c_1=\sum _{m\in\Z } c_1(m)t^m\,\op{mod}\,\c M(p-1)$ with  
all $c_1(m)\in\c L_k$.

\begin{Thm} \label{T4.8}  The following properties are equivalent: 
\medskip 

{\rm a)}\ $h_{<p}$ is arithmetical over $\c K$; 
\medskip 
  
{\rm b)}\ $(\Ad h_{<p}-\id _{\c L})\c L\subset\c L^{(c_0)}$ and for a sufficiently large $N$, 
$$\bar c_1\equiv  \sum _{\gamma ,j}
\sum _{0\leqslant i<N}\sigma ^i(A_j(h)\c F^0_{\gamma ,-i}
t^{-\gamma +c_0+pj})                          
\,\op{mod}\,\c L_{\c K}^{(c_0)}+\c M(p-1)\, ;$$

{\rm c)} for a sufficiently large $N$, 
$$c_1(0)\equiv  \sum _{j\geqslant 0}
\sum _{0\leqslant i<N}\sigma ^i(A_j(h)\c F^0_{c_0+pj,-i})                          
\,\op{mod}\,\c L_k^{(c_0)}\, .$$
\end{Thm}

\begin{remark}
 Note that if $\gamma\geqslant c_0$ and $i\geqslant\wt{N}(c_0)$, cf. Theorem \ref{T1.2}, 
 then $\c F^0_{\gamma ,-i}\in\c L_k^{(c_0)}$. There is also $\delta >0$, cf. 
Subsection \ref{S4.4}, such that 
 if $\c F^0_{\gamma ,-i}\ne 0$ and $\gamma <c_0$ then $\gamma <c_0-\delta $. 
 (In other words, 
 any $\gamma\in [c_0-\delta ,c_0)$ can't be presented 
 in the form $a_1+a_2p^{n_2}+\dots +a_sp^{n_s}$, where $1\leqslant s<p$, all $n_j\leqslant 0$ 
 and all $a_j\in\Z ^0(p)$.)
 Therefore, in b) 
 we can take 
 $N\geqslant \max\{\wt{N}(c_0), \log_p((p-1)c_0/\delta )\}$ and in c) $N\geqslant\wt{N}(c_0)$ 
(use that under these conditions the appropriate RHS's do not depend on $N$).  
\end{remark}

\subsection{Auxiliary result} \label{S4.4} 

We review here a technical result from \cite{Ab3}, Section 3. (Note that 
all results in \cite{Ab3} were obtained in the contravariant setting.) 
This paper deals with explicit calculations with 
ramification ideals in Lie algebras over $\Z /p^{M+1}$. It is much easier to follow 
these calculations when assuming that $M=0$ (we need only this case). 
First, introduce the relevant 
objects and assumptions. 
\medskip 

{\it Introduction of objects}. 
\medskip  

Set $M=0$ (we need the period $p$ case but all 
constructions in Section 3 of \cite{Ab3}  
were done modulo $p^{M+1}$). 
Let $A=[0,(p-1)v_0)\cap\Z ^0(p)$, where $v_0\geqslant 0$ 
(later we shall specify $v_0=c_0$). (In \cite{Ab3} we used $pv_0$ 
in the definition of $A$ instead of $(p-1)v_0$ but everything works with $(p-1)v_0$.) 
Let $\c L(A)$ be a free Lie algebra over 
$k\simeq\F _{p^{N_0}}$ with the set 
of generators 
$$\{\c D _{an}\ |\ a\in A^+=A\cap\Z ^+(p), n\in\Z /N_0\}\cup\{\D _0\}\, .$$
As a matter of fact, we agreed in \cite{Ab3} that 
$n\in\Z $ and $\c D_{an_1}=\c D_{an_2}$ iff $n_1\equiv n_2\,\op{mod}\,N_0$.
For $n\in\Z $, set $\D _{0n}=(\sigma ^n\alpha _0)\D _0$ and note that 
again $\c D_{0n}$ depends only on $n\,\op{mod}N_0$.  
Consider the $\sigma $-linear morphism $\c L(A)\To\c L (A)$ such that 
for all $a,n$, $\D _{an}\mapsto \D _{a,n+1}$ and denote 
this morphism  also by $\sigma $. 
Then $\c L^0:=\c L(A)|_{\sigma =\id }$ is a 
free Lie algebra over $\F _p$ and 
$\c L ^0_k=\c L(A)$. 

Consider the contravariant analogue of the 
elements $\c F^0_{\gamma ,-N}$ from 
Subsection \ref{S1.4} (use the same conditions for all involved indices) 
$$\c F_{\gamma ,-N}=\hskip -5pt \sum _{1\leqslant s<p}(-1)^{s-1}\hskip -5pt 
\sum _{\substack {a_1,\dots ,a_s\\n_1,
\dots ,n_s}}a_1\eta (n_1,\dots ,n_s)[\dots [\D _{a_1n_1},
\D _{a_2n_2}],\dots ,\D _{a_sn_s}]\, .$$
Recall that $a_1,\dots ,a_s$ run over $A$ and $n_1,\dots ,n_s$ run over $\Z $ such that 
$\gamma (\bar a,\bar n)=a_1p^{n_1}+\dots +a_sp^{n_s}=\gamma $. 

Denote by $\c L^0_N(v_0)$ the minimal ideal in $\c L^0$ such that its extension of scalars 
$\c L^0_N(v_0)_k$ contains all $\c F_{\gamma ,-N}$ with $\gamma\geqslant v_0$. 
Let $\wt{N}(v_0,A)$ be such that 
the ideals $\c L^0_N(v_0)$ coincide for all 
$N\geqslant \wt{N}(v_0,A)$ and denote this ideal 
by $\c L^0(v_0)$. 

Let $\Gamma =\Gamma (A,v_0)$ be the set of all $\gamma =a_1p^{n_1}+\dots +a_sp^{n_s}$, 
where all $a_i\in A$, 
$0=n_1\geqslant n_2\geqslant \dots \geqslant n_s$, $1\leqslant s<p$. 
\medskip 

{\it Choise of parameters $\delta , r^*, N^*$}:
\medskip 

a) let  $\delta =\delta (A,v_0)>0$ be sufficiently small such that 
$v_0-\delta>\max\{\gamma\ |\ \gamma\in\Gamma , \gamma <v_0\}$, $p\delta <2v_0$ and    
$v_0-\delta\in\Z [1/p]$; 
\medskip  

b) let $r^*$ be such that $v_p(r^*)=0$ and $v_0-\delta <r^*<v_0$; 
\medskip 

c) let $N^*\in\N $ be such that $N^*\geqslant \wt{N}(v_0,A)+1$ and for $q=p^{N^*}$,  
we have $r^*(q-1)=b^*\in\N $ (note $v_p(b^*)=0$), 
$a^*=q(v_0-\delta )\in p\N$; 
\medskip 

d) note that if $q$ satisfies the conditions from c) 
then any its power $q^A$ with $A\in\N $ 
also satisfies these conditions; therefore, we can enlarge (if necessary) $q$ 
to obtain the following inequalities: 
$$r^*-(v_0-\delta )>\frac{r^*+p(v_0-\delta )}{q}\,,\ \ 
v_0-r^*>\frac{-r^*+\varphi _{(p)}(e_{(p)}v_0(p-1))}{q}$$

 All above constructions and choices were made in Subsection 3.1 of \cite{Ab3}, 
except the additional conditions $p\delta <2v_0$ and 
the second inequality in d).  In this 
inequality $\varphi _{(p)}$ and $e_{(p)}$ are the Herbrand function and, resp., 
the ramification index of the extension $\c K{(p)}/\c K$. Recall that 
$\c K(p)$ is a subfield of $\c K_{<p}$, fixed by $G(\c L(p))$ and $[\c K(p):\c K]<\infty $.
\medskip 

We need the auxiliary field extension 
$\c K'=\c K(r^*,N^*)$ of $\c K$ such that:

--- $[\c K':\c K]=q$;
\medskip 

--- the Herbrand function $\varphi _{\c K'/\c K}$ 
has only one edge point $(r^*,r^*)$;
\medskip 

--- $\c K'=k((t'))$, where $t=t^{\prime\,q}E(t^{\prime\,b^*})^{-1}$ 
with the Artin-Hasse exponential 
$E(X)=\exp (X+X^p/p+\dots +X^{p^n}/p^n+\dots )$.
\medskip 

The field $\c K'$ played very important role in our approach to 
the ramification filtration in \cite{Ab1, Ab2, Ab3, Ab8, Ab9, Ab10}. 
(Note that $\c K'/\c K$ is not a $p$-extension if $N^*>1$.) 

Adjust the notation from \cite{Ab3} to our situation 
by setting $\hat N=\wt{N}=N^*-1$ (in particular, $\wt{N}$ could be different 
from $\wt{N}(v_0,A)$ introduced earlier). 

Let $\hat e_{\c L}^{(0)}=\sum _{a\in A}t^{-a}\D _{a0}$ and 
$e_{\c L}^{\prime (q)}=
\sum _{a\in A}t^{\prime\,-aq}\D _{a0}$. 
(We follow maximally close the notation 
from \cite{Ab3}.) Clearly, the elements $\hat e_{\c L}^{(0)}$ and 
$e_{\c L}':=\sum _{a\in A}t^{\prime\,-a}\D _{a,-N^*}$ 
are analogs of our element $e$ introduced in Subsection \ref{S1.3} and 
$\sigma ^{N^*}e_{\c L}'=e_{\c L}^{\prime (q)}$. Note that both these elements belong to 
$\c L^0_{\c K'}=\c L(A)\otimes _k\c K'$ (for $\hat e_{\c L}^{(0)}$ 
use that $t=t^{\prime q}E(t^{\prime b^*})^{-1}$). 

The technical result from \cite{Ab3} we are going to apply below  
deals with estimates in the 
envelopping algebra $\c A$ of $\c L^0$. We can describe this result as follows. 

Let $J$ be the augmentation ideal in $\c A$. Adjusting the notation from \cite{Ab3} 
note that (since we work with the case $M=0$)   
$O_1=\c K'$, $t_1=t'$, $O_0=k[[t']]$, $J_1=J_{\c K'}$  and $J_O=J\otimes O_0$. 

Use the map $\wt{\exp}$ from $\c L^0_{\c K'}$ to $J_{\c K'}\,\op{mod}\,J_{\c K'}^p$ 
from Subsection \ref{S3.3}. We obtain 
the elements 
$E_0=\wt{\exp}(\hat e^{(0)}_{\c L})$, $E'_0=\sigma ^{N^*}\wt{\exp}(e'_{\c L})$  
and (where we specified $m=1$) 
the element 
$\Phi _0^{(\wt{N})}=\Phi _{01}^{(\wt{N})}=\Phi _{11}\Phi _{21}$, 
cf. the first paragraph on p.890 in the proof of Lemma 2 in Subsection 3.10 of \cite{Ab3}. 
Explicit expressions for $\Phi _{11}$ and $\Phi _{21}$ from the second paragraph on p.890 
must be written in the following way  
$$\Phi _{11}=\wt{\exp}(e'^{(q)}_{\c L})\,\wt{\exp}(\sigma e'^{(q)}_{\c L})\,\dots\, 
\wt{\exp}(\sigma ^{\wt{N}}e'^{(q)}_{\c L})$$
$$\Phi _{21}=\wt{\exp}(-\sigma ^{\wt{N}}\hat e^{(0)}_{\c L})\,
\dots \,\wt{\exp}(-\sigma \hat e^{(0)}_{\c L})
\,\wt{\exp}(-\hat e^{(0)}_{\c L})\,.$$
(By misprint they appeared in \cite{Ab3} as the products of the same factors 
but taken in the opposite order.) Note that when adjusting the notation from \cite{Ab3} 
to our situation we have that $\c E_{0-\hat N}(a,n)=\sigma ^nE(a, t^{\prime b^*})$ and, therefore, 
$\c E_{0-\hat N}(a,n)\sigma ^n(t_1^{-qa}\c D_{a0})$ coincides with $\sigma ^n(t^{-qa}\c D_{a0})$. 

Using the properties $\alpha )-\gamma )$ from 
Subsection \ref{S3.3}  we obtain that  
$\Phi _0^{(\wt{N})}=\wt{\exp}(\phi _0^{(\wt{N})})$, where 
$\phi _0^{(\wt{N})}\in G(\c L^0_{\c K'})=G(\c L(A)\otimes _k\c K')$ is equal to 
$$\phi _0^{(\wt{N})}=e^{\prime (q)}_{\c L}
\circ (\sigma e^{\prime (q)}_{\c L})\circ 
\dots \circ (\sigma ^{\wt{N}}e^{\prime (q)}_{\c L})\circ 
(-\sigma ^{\wt N}\hat e_{\c L}^{(0)})
\circ \dots \circ (-\sigma\hat e_{\c L}^{(0)})
\circ (-\hat e_{\c L}^{(0)})\, .$$

Then the properties (a) and (b) of $\Phi _0^{(\wt{N})}$ from 
Proposition 9 of Subsection 3.9 in \cite{Ab3} imply the 
following properties of the element $\phi _0^{(\wt{N})}$, 
cf. the proposition from Subsection 3.10 of \cite{Ab3} 
(where $\c L_O:=\c L^0\otimes O_0$)

\begin{Prop} \label{P4.9} 
\medskip 
 a) $\phi _0^{(\wt N)}, \sigma\phi _0^{(\wt N)}\in\c L^0(v_0)_{\c K'}+\sum _{1\leqslant j<p}
t'^{-ja^*}C_j(\c L _{O})$;
\medskip 

b) $\phi _0^{(\wt N)}\circ \hat e^0_{\c L}\equiv 
e^{\prime (q)}_{\c L}\circ \sigma\phi _0^{(\wt N)}
\,\op{mod}\,\c L\c H_1^0$, where 
\medskip 

$\c L\c H_1^0=\c L^0(v_0)_{\c K'}+
t'^{q(b^*-a^*)}\sum _{1\leqslant j<p}t'^{-(j-1)a^*}C_j(\c L_{O})$. 
\end{Prop}

This technical result from \cite{Ab3} can be translated  into the covariant 
setting and the notation from this paper as follows. 
\medskip 

Let $v_0=c_0$. 

Consider the map $\Pi $ from $\c L^0$ to 
$\c L$ such that $\Pi _k(\c D_{an})=D_{an}$ for all 
$a\in A$ and $n\in\Z /N_0$ and for any 
$l_1,l_2\in\c L^0$, $\Pi ([l_1,l_2])=
[\Pi (l_2),\Pi (l_1)]$. 

Then the (ramification) ideal $\c L^0(v_0)$ is mapped to 
$\c L^{(c_0)}$. Essentially, $\Pi $ is a morphism of Lie algebras 
(where $\c L^0$ is taken with the opposite Lie structure) and it 
induces isomorphism of the appropriate quotients by $\c L^0(c_0)$ and 
$\c L^{(c_0)}$, respectively (use that by Proposition \ref{P4.5} all $D_{an}\in\c L^{(c_0)}_k$ if 
$a>(p-1)c_0$). 

Clearly, $\Pi _{\c K'}(\hat e_{\c L}^{(0)})\equiv e\,\op{mod}\,\c L^{(c_0)}_{\c K'}$ and 
$$\Pi _{\c K'}(e'_{\c L})\equiv e':=
\sum _{a\in \Z ^0(p)}t^{\prime -a}D_{a,-N^*}\,\op{mod}\,\c L_{\c K'}^{(c_0)}\, .$$ 
If $\phi _0:=\Pi _{\c K'}(\phi _0^{(\wt N)})$ then 
$\phi _0\equiv (-\phi )\circ 
(\sigma ^{N^*}\phi ')\,\op{mod}\,\c L^{(c_0)}_{\c K'}$,  
where we set 
$\phi =(\sigma ^{\wt N}e)\circ \dots \circ (\sigma e)\circ e$ and 
$\phi '=(\sigma ^{\wt N}e')\circ \dots \circ (\sigma e')\circ e'$. 

Let 
$$\c M_{\c K'}:=\sum _{1\leqslant j<p}t^{-c_0j}
 \c L(j)_{\m '}+\c L(p)_{\c K'}\, ,$$
where $\m '$ is the maximal ideal of the valuation ring $O_0$ 
of $\c K'$. Similarly, set 
$$\c M _{\c K'_{<p}}=\sum _{1\leqslant j<p}t^{-c_0j}
 \c L(j)_{\m '_{<p}}+\c L(p)_{\c K'_{<p}}$$
where $\c K'_{<p}$ and $\m '_{<p}$ are the analogs of 
$\c K_{<p}$ and $\m _{<p}$ for $\c K'$.  

Note that the above introduced modules 
$\c M_{\c K'}$ and $\c M_{\c K'_{<p}}$ are not obtained from  
$\c M$ and, resp.,  $\c M_{<p}$ when we replace $\c K$ by $\c K'$.  
Under such replacement we shall obtain  
from $\c M$ and $\c M_{<p}$ the following modules 
$$\c M':=\sum _{1\leqslant j<p}t^{\prime\,-c_0j}
 \c L(j)_{\m '}+\c L(p)_{\c K'}\, ,$$
$$\c M'_{<p}:=\sum _{1\leqslant j<p}t^{\prime\,-c_0j}
 \c L(j)_{\m '_{<p}}+\c L(p)_{\c K'_{<p}}\,.$$
However, $\sigma ^{N^*}\c M'\subset \c M_{\c K'}$ and 
$\sigma ^{N^*}\c M'_{<p}\subset \c M_{\c K'_{<p}}$. 

Now we use the special choice of involved parameters 
to deduce from above Proposition \ref{P4.9} the following proposition.

\begin{Prop} \label{P4.10} 
{\rm a)} $\phi _0, \sigma (\phi _0)\in \c M _{\c K'}+\c L_{\c K'}^{(c_0)}$;
\medskip 

{\rm b)} $e\circ \phi _0\equiv (\sigma \phi _0)\circ (\sigma ^{N^*}e')\,
\op{mod}\,\left (t^{c_0(p-1)}\c M_{\c K'}+\c L^{(c_0)}_{\c K'}\right )$
 \end{Prop}

\begin{proof} a) From the definition of $a^*$ it follows that 
$a^*=(c_0-\delta )q<c_0q$. Therefore, for $1\leqslant j<p$, 
$$t^{\prime\,-ja^*}\Pi (C_j(\c L_O))\subset t^{\prime\,-ja^*}O_{0}C_j(\c L)\subset t^{-jc_0}\m 'C_j(\c L)
\subset t^{-jc_0}\c L(j)_{\m '}\,.$$
 
For part b), we need 
for $1\leqslant j<p$,  
$$q(b^*-a^*)-(j-1)a^*>(p-j-1)qc_0\,.$$
This can be rewritten as 
$q(r^*-(c_0-\delta ))>r^*+(p-2)c_0-(j-1)\delta $.  
This follows from the inequality $p\delta<2v_0$ in a) and the first inequality in d) 
from the beginning of this 
subsection. 
\end{proof}

\subsection{Implication a)\ $\Leftrightarrow $ b), I}\label{S4.5} 

Suppose $h_{<p}$ is arithmetical. This means that $h^{(c_0)}=h_{<p}|_{\c K^{(c_0)}}
=h(p)|_{\c K^{(c_0)}}$ is (a unique) arithmetical lift of $h$. Then the appropriate 
$\bar c_1=c_1\,\op{mod}(\c M(p-1)+\c L^{(c_0)}_{\c K_{<p}})$ appears as the ``linear part of $c$'' 
if and only if  
$$(\id _{\bar{\c L}}\otimes h(p)^U)\bar f=c_1U\circ f\,
\op{mod}\,(\c M_{<p}U^2+t^{c_0(p-1)}\c M_{<p}U
+\c L_{\c K_{<p}}^{(c_0)}U)\, .$$

Consider the field $\c K'$ from Subsection \ref{S4.4}. This field is 
isomorphic to $\c K$ and this isomorphism can be extended to an isomorphism 
of $\c K_{<p}$ and its analog $\c K'_{<p}$. Let $f'\in\c M'_{<p}$ be such that 
$\sigma f'=e'\circ f'$. Then Proposition \ref{P4.10}\,b) implies the followimng lemma. 

\begin{Lem} \label{L4.11} 
$f'$ can be chosen in such a way that 
$$f\equiv \phi _0\circ \sigma ^{N^*}f'\,\op{mod}\,\left (t^{c_0(p-1)}\c M_{\c K'_{<p}}
+\c L_{\c K'_{<p}}^{(c_0)}\right )\,.$$
\end{Lem} 

\begin{proof} Let $g=(-f)\circ \phi _0\circ \sigma ^{N^*}f'\in\c M'_{\c K'_{<p}}$. 
Then by Proposition \ref{P4.10}b) 
$$\sigma g\equiv g\,\op{mod}\,(t^{c_0(p-1)}\c M_{\c K'_{<p}}+\c L^{(c_0)}_{\c K'_{<p}})\, .$$ 

This congruence implies that 
$$g\in\c L+t^{c_0(p-1)}\c M_{\c K'_{<p}}+\c L^{(c_0)}_{\c K'_{<p}}$$ 
(use that 
$\sigma $ is topologically nilpotent on $t^{c_0(p-1)}\c M_{\c K'_{<p}}\,\op{mod}\,\c L(p)_{\c K'_{<p}}$). 
Therefore, there is $l\in\c L$ such that 
$g\equiv l\,\op{mod}\,(t^{c_0(p-1)}\c M_{\c K'_{<p}}+\c L^{(c_0)}_{\c K'_{<p}})$ and we obtain 
our lemma with $f'$ replaced by $f'\circ (-l)$.  
\end{proof}

\subsection{Implication a)\ $\Leftrightarrow $ b), II}\label{S4.5a} 

Now note that $\c K\subset\c K'$ induces the embeddings 
$\c K_{<p}\subset \c K'\c K_{<p}\subset \c K'_{<p}$. 

Suppose $g\in\c I_{\c K}$ and $\hat g\in\c I$ is its arithmetical 
lift (i.e. for any finite field extension $\c E/\c K$, 
$v(\hat g|_{\c E})=\varphi ^{-1}_{\c E/\c K}(v(g))$). 
Introduce (similarly to 
$\c M_{\c K'_{<p}}$) 
$$\c M_{R_0}=\sum _{1\leqslant j<p}t^{-c_0j}\c L(j)_{\m _R}+\c L(p)_{R_0}\, .$$
Then Lemma \ref{L4.11} implies that modulo $t^{c_0(p-1)}\c M_{R_0}+\c L^{(c_0)}_{R_0}$ we have 
$$(\id _{\c L}\otimes g_{<p})f\equiv (-\id _{\c L}\otimes g)\phi 
\circ (\id _{\c L}\otimes g')\sigma ^{N^*}\phi '
\circ (\id _{\c L}\otimes g'_{<p})\sigma ^{N^*}f'
\,.$$ 
Here 
$g _{<p}:=\hat g|_{\c K _{<p}}$, $g'_{<p}:=\hat g|_{\c K '_{<p}}$ and $g':=\hat g |_{\c K'}$ are all  
arithmetical over $\c K$. 
(Recall, $\phi _0\equiv (-\phi )\circ (\sigma ^{N^*}\phi ')$, cf. Subsection \ref{S4.3}.)

\begin{Prop} \label{P4.12}
Suppose $v(g)=c_0$. Then 
\medskip 

{\rm a)}\ $(\id _{\c L}\otimes g'_{<p}-\id _{\c K'_{<p}})
\sigma ^{N^*}f'\in t^{c_0(p-1)}\c M_{R_0}$;
\medskip 

{\rm b)}\ $(\id _{\c L}\otimes g'-\id _{\c K'})
\sigma ^{N^*}\phi '\in t^{c_0(p-1)}\c M_{R_0}$. 
\end{Prop}

\begin{proof}  Let $\c K'(p)$ be an analogue of $\c K(p)$ for $\c K'$.  \newline 
If we set  $g'_{(p)}=\hat g|_{\c K'(p)}$ then it is arithmetical over $\c K$ and 
$$v(g'_{(p)})=\varphi ^{-1}_{(p)}(\varphi ^{-1}_{\c K'/\c K}(c_0))=
\varphi ^{-1}_{(p)}(r^*+q(c_0-r^*))>e_{(p)}c_0(p-1)\, ,$$
cf. item d) in Subsection \ref{S4.3}. This means that 
for any $a\in \c K'(p)$, 
\begin{equation} \label{E4.1}
g'_{(p)}(a)-a\in at^{\prime c_0(p-1)}R\,.
\end{equation} 

Now notice that $f'\,\op{mod}\,\c L(p)_{\c K'_{<p}}\in\bar{\c L}_{\c K'(p)}$, 
cf. Subsection \ref{S1.3}. This implies that $f'\in\c M_{\c K'{(p)}}+\c L(p)_{\c K'_{<p}}$, 
where $\c M_{\c K'(p)}$ is an analogue of $\c M_{\c K'_{<p}}$ for $\c K'(p)$. 
Now the property \eqref{E4.1} implies that 
$$(\id _{\c L}\otimes g'_{<p})f'-f'\in t^{\prime c_0(p-1)}\c M'_{R_0}+\c L(p)_{R_0}=
t^{\prime c_0(p-1)}\c M'_{R_0}\, ,$$
where $\c M'_{R_0}:=
\sum _{1\leqslant j<p}t^{\prime -c_0j}\c L(j)_{\m _R}+\c L(p)_{R_0}$, 
and we obtain a) by applying $\sigma ^{N^*}$. 

For similar reasons, 
$$v(g')=r^*+q(c_0-r^*)>\varphi _{(p)}(e_{(p)}c_0(p-1))
\geqslant c_0(p-1)$$
(we use that $\varphi _{(p)}(e_{(p)}x)\geqslant x$ for any $x\geqslant 0$), 
and then for any $a\in\c K'$, 
$$g'(a)-a\in at^{\prime c_0(p-1)}R\, .$$
This implies  
$$(\id _{{\c L}}\otimes g')e'-e'\in t^{\prime c_0(p-1)}\c M'_{R_0}\, , \ \   
(\id _{\c L}\otimes g')\phi '-\phi '\in t^{\prime c_0(p-1)}\c M'_{R_0}\, ,$$  
and we obtain b) by applying  $\sigma ^{N^*}$. 
\end{proof}

\begin{Cor} \label{C4.13} 
Suppose $g\in\c I_{\c K}$, $v(g)=c_0$ and $g_{<p}$ is a lift of $g$ to $\c K_{<p}$.  Then the 
following conditions are equivalent:
\medskip 

{\rm a)}\ $g_{<p}$ is arithmetical lift of $g$;
\medskip 

{\rm b)}\ $(\id _{\c L}\otimes g_{<p})f\equiv (-\id _{\c L}\otimes g)\phi\circ \phi\circ f\,\op{mod}\,
(t^{c_0(p-1)}\c M_{R_0}+\c L^{(c_0)}_{R_0})\,.$ 
\end{Cor}

\begin{proof}
 Assume that $g_{<p}$ is arithmetical. We can assume that $g_{<p}=g'_{<p}|_{\c K_{<p}}$ 
 where $g'_{<p}\in\c I_{\c K'_{<p}}$ is arithmetical lift of $g$. 
 Then Lemma \ref{L4.11} and Proposition \ref{P4.12} imply that modulo 
 $t^{c_0(p-1)}\c M_{R_0}+\c L^{(c_0)}_{R_0}$
 $$(\id _{\c L}\otimes g_{<p})f\equiv (-\id _{\c L}\otimes g)\phi 
 \circ (\id _{\c L}\otimes g')\sigma ^{N^*}\phi '\circ (\id _{\c L}\otimes g'_{<p})\sigma ^{N^*}f'$$
 $$\equiv (-\id _{\c L}\otimes g)\phi\circ \phi \circ \phi _0\circ \sigma ^{N^*}f'
 \equiv (-\id _{\c L}\otimes g)\phi\circ \phi\circ f\, ,$$
 and we obtained b). 
 
 Assume that b) holds. If $g^o_{<p}\in\c I_{\c K_{<p}}$ is an arithmetical lift of $g$ 
 then we can apply b) and obtain 
 $$(\id _{\c L}\otimes g_{<p})f\equiv (\id _{\c L}\otimes g^o_{<p})f\,
 \op{mod}\,(t^{c_0(p-1)}\c M_{R_0}+\c L^{(c_0)}_{R_0})\,.$$
 On the other hand, there is $l\in G(\c L)$ such that $g_{<p}=g^o_{<p}\eta _0^{-1}(l)$. 
 Then the above congruence implies that 
 $$l\in t^{c_0(p-1)}\c M_{R_0}+\c L^{(c_0)}_{R_0}
 \subset \m _R\c L_R+\c L^{(c_0)}_{R_0}\,.$$
 But then $l\in \left (\m _R\c L_R+\c L_{R_0}^{(c_0)}\right )|_{\sigma =\id }=\c L^{(c_0)}$. 
 Therefore, $g_{<p}$ is also arithmetical. 
\end{proof}

\subsection{Implication a)\ $\Leftrightarrow $ b), III}\label{S4.6}  

Let $1\leqslant n<p$. 
Applying Corollary \ref{C4.13} to $g=h^n$ and its lift $h^n_{<p}$ we obtain 
that the following two properties are equivalent:
\medskip 

$\bullet $\ $h^n_{<p}$ is arithmetical;
\medskip 

$\bullet $\ 
$(\id _{\c L}\otimes h^n_{<p})f=c(n)\circ (A^n\otimes\id _{\c K_{<p}})f$, where 
$(A^n-\id _{\c L})\c L\subset\c L^{(c_0)}$ and 
$c(n)\equiv (-\id _{\c L}\otimes h^n)\phi \circ\phi \,\op{mod}\,\c M(p-1)+\c L^{(c_0)}_{\c K}$. 

Clearly, the first condition holds if and only if $h_{<p}$ is arithmetical. 

The second condition means that $(A-\id _{\c L})\c L\subset\c L^{(c_0)}$ 
and 
$$c(U)\equiv (-\id _{\c L}\otimes h^U)\phi \circ \phi \, \op{mod}\,\c M(p-1)+\c L_{\c K}^{(c_0)}\, .$$ 
The both parts of the last  congruence can be recovered 
uniquely by their linear terms: this is obvious for $(-\id _{\c L}\otimes h^U)\phi \circ \phi $ 
and was explained in Subsection \ref{S3.5} for $c(U)$. Therefore, the equivalence of 
a) and b) will be proved if we show that the linear part of $(-\id _{\c L}\otimes h^U)\phi \circ \phi $ 
takes prescribed value from part b) of our theorem. 

Recall that 
$\phi =(\sigma ^{\wt N}e)\circ \dots \circ (\sigma e)\circ e$.

Apply identites \eqref{E3.5} and \eqref{E3.6} from Subsection \ref{S3.2}, 
use the definition of the elements $\c F^0_{\gamma ,-N}\in\c L_k$ 
from Subsection \ref{S1.4} and the abbreviation $d_h:=d(\id _{\c L}\otimes h^U)$ 
to obtain the following congruences modulo $U^2$:  
$$e+d_he\equiv e\circ 
\left (\sum _{k\geqslant 1}(1/k!)[\dots [d_he,
\underset{k-1\text{ times }}{\underbrace{e],
\dots ,e}}]\right )\qquad\qquad $$
$$\qquad\qquad\qquad\qquad \equiv 
e\circ \left (-U\sum _{\gamma>0\, ,j\geqslant 0} A_j(h)
\c F^0_{\gamma ,0}\,t^{-\gamma +c_0+pj}\right )$$

Similarly, 
$$\sigma e+\sigma d_he\equiv 
\sigma e\circ \left (\sum _{k\geqslant 1}(1/k!)[\dots [\sigma d_he,
\underset{k-1\text{ times }}{\underbrace{\sigma e],
\dots ,\sigma e}}]\right )$$
then 
$$(\sigma e+\sigma d_he)\circ e\equiv $$
$$(\sigma e)\circ e\circ \left (\sum_{\substack{k_0\geqslant 1\\
k_1\geqslant 0}}\frac{1}{k_0!k_1!}
[\dots [\sigma d_he,\underset{k_0-1\text{ times }}
{\underbrace{\sigma e],\dots ,
\sigma e}}],\underset{k_1\text{ times }}
{\underbrace{e],\dots ,e}}]\right )$$

$$=(\sigma e)\circ e\circ \left (-U
\sum _{\substack{\gamma >0 \\ j\geqslant 0}}
\sigma (A_j(h)\c F^0_{\gamma ,-1}t^{-\gamma +c_0+pj})\right ) $$
and taking above formulas together we obtain 

$$(\sigma e+\sigma d_he)\circ (e+d_he)\equiv (\sigma e)\circ e\circ \left (-U
\sum _{\substack{\gamma >0 \\ j\geqslant 0}}\sum _{0\leqslant i\leqslant 1}
\sigma ^i(A_j(h)\c F^0_{\gamma ,-i}t^{-\gamma +c_0+pj})\right ) $$

We can continue similarly to obtain that 

$$(\id\otimes h^U)\phi \equiv \phi \circ 
\left (-U\sum _{\substack{\gamma >0\\ j\geqslant 0}}
\sum _{0\leqslant i\leqslant \wt{N}}
\sigma ^i(A_j(h)\c F^0_{\gamma ,-i}t^{-\gamma +c_0+pj})\right )
\,\op{mod}\,U^2$$

So, the linear term takes the prescribed value and the statements a) and b) of theorem are equivalent.

\subsection{The end of proof of Theorem \ref{T4.8}} \label{S4.7} 

Obviously, b) implies c). 

Suppose a lift $h_{<p}$ has ingredients $c_1$ and 
$\{V_{a0}\ |\ a\in\Z ^0(p)\}$ and $c_1(0)$ satisfies the condition c) of our theorem. 
Take the maximal $1\leqslant s_0\leqslant p$ such that $h_{<p}|_{\c K_{<p}^{G(\c L(s_0))}}$ 
is arithmetical. If $s_0=p$ then $h(p)$ is arithmetical and this implies that $h_{<p}$ is arithmetical. 

Suppose $s_0<p$. 

Let $h^o_{<p}$ be 
some arithmetical lift of $h$ with the appropriate ingredients $c_1^o$ and $\{V_a^o\ |\ a\in\Z ^0(p)\}$. 
Therefore, 
$$c_1\equiv c_1^o\,\op{mod}\,\c L^{(c_0)}_{\c K}+\c L(s_0)_{\c K}\, .$$
Note that for 
all $a\in\Z ^0(p)$, $V_{a0}\in \c L^{(c_0)}_k+\c L(s_0)_k$ and 
$V_a^o\in\c L_k^{(c_0)}$. Then recurrent relation 
\eqref{E3.4} (considered at the $s_0$-th step) implies that 
$$\sigma c_1-c_1+\sum _{a\in\Z ^0(p)}t^{-a}V_{a0}\equiv 
\sigma c^o_1-c^o_1\,\op{mod}\,\c L^{(c_0)}_{\c K}+\c L(s_0+1)_{\c K}\, .$$
Therefore, by Lemma \ref{L2.2}b), all $V_{a0}\in \c L^{(c_0)}_{k}+\c L(s_0+1)_{k}$ and  
$$c_1-c_1^o\equiv c_1(0)-c_1^o(0)
\,\op{mod}\,\c L_{\c K}^{(c_0)}+\c L(s_0+1)_{\c K}\,.$$ 
So, if $c_1(0)$ satisfies c) then $c_1\equiv c_1^o\,\op{mod}\,\c L_{\c K}^{(c_0)}+
\c L(s_0+1)_{\c K}$ and the restriction $h_{<p}|_{\c K_{<p}^{G(\c L(s_0+1))}}$
is arithmetical. The contradiction. 
Theorem \ref{T4.8} is completely proved.

\section{Explicit calculations in $L_h$} \label{S5} 

In this Section we apply the above techniques to 
study the lifts $h(p)=h_{<p}|_{\c K(p)}$. In Subsection \ref{S4} we studied 
the properties of $h_{<p}|_{\c K^{(c_0)}}$ and that was sufficient to 
characterize the property of $h_{<p}$ to be arithmetical over $\c K$. 
If we want to describe completely the structure of the Lie algebra $L_h$ we need 
to study the invariants $\ad\,h(p)$ and $c_1$ of $h(p)$.    

Suppose $h(p)$ is given, as earlier, via 
$$(\id _{\bar{\c L}}\otimes h(p))\bar f=
\bar c\circ (\op{Ad}\,h(p)\otimes \id _{\c K(p)})\bar f$$ 
with the appropriate $\bar c\in\c M\,\op{mod}\,\c M(p-1)$. Then the relevant elements 
$c_1\in\c L_{\c K}\,\op{mod}\,\c M(p-1)$ and 
$V_{a0}=\ad h(p)(D_{a0})\in\bar{\c L}_k=\c L_k/\c L(p)_k$, $a\in\Z ^0(p)$, 
satisfy recurrent relation \eqref{E3.4}. 
This allows us to proceed  
from solutions $(c_1, \sum _at^{-a}V_{a0})$ obtained modulo $\c M(p-1)+\c L(s)_{\c K}$ 
to the appropriate ``more precise'' solutions modulo $\c M (p-1) +\c L(s+1)_{\c K}$, 
for all $1\leqslant s<p$.  

As earlier, let  $c_1=\sum _{m\in\Z }c_1(m)t^m$, where 
all $c_1(m)\in\bar{\c L}_k$. Introduce 
$c_1^+=\sum _{m>0}c_1(m)t^{m}$ and 
$c_1^-=\sum _{m<0}c_1(m)t^m$. Then 
$$c_1=c_1^-+c_1(0)+c_1^+\, .$$

In this Section we find ``precise'' formulas for $c^+$, $c(0)$ 
and $V_0=\alpha _0^{-1}V_{00}=\op{ad}h(p)(D_0)$. 
When choosing $c_1^+$ we use the operator $\c S$ from 
Subsection \ref{S2.2}. 
When choosing $c_1(0)$ we must act more carefully. 
The expression for $\op{ad}h(p)(D_0)$  is given in Proposition \ref{P5.4} 
below.

It would be very interesting to resolve completely recurrent relation \eqref{E3.4} 
and to find reasonably compact formulas for $c_1^-$ and all the elements 
$V_{a0}=\op{ad}\,h(p)(D_{a0})$, $a\in\Z ^+(p)$. This would generalize explicit calculations from 
Subsection \ref{S3.6}. Some steps in this direction were made recently 
by K.\,McCabe (PhD Thesis, Durham University).

\subsection{Explicit formula for  $c_1^+$} \label{S5.1} 

Consider all $(\bar a,\bar n)=(a_1,n_1,\dots ,a_s,n_s)$ such that $1\leqslant s<p$, 
all $a_i\in\Z ^0(p)$ and $n_1\geqslant n_2\geqslant\dots \geqslant n_s=0$. 

Set $\gamma (\bar a,\bar n)=a_1p^{n_1}+a_2p^{n_2}+\dots +a_sp^{n_s}$. 

Set $D_{(\bar a,\bar n)}=[\dots [D_{a_1n_1},D_{a_2n_2}],\dots ,D_{a_sn_s}]$ 
and use the weight function 
$\op{wt}(D_{(\bar a,\bar n)})=\op{wt}(D_{a_1n_1})+\dots +\op{wt}(D_{a_sn_s})$ 
from Subsection \ref{S2.4}. 

Denote by $\delta ^+(c_0)$ the minimum of all positive values of 
$$(c_0+pj)-p^{-n_1}\gamma (\bar a,\bar n)\, ,$$ 
where $j\geqslant 0$ and $(\bar a,\bar n)$ runs over the set of 
all above vectors with additional condition 
$\op{wt}(D_{(\bar a,\bar n)})<p$. 

Finally, let $N^+(c_0)=\min \{n\geqslant 0\ |\ p^n\delta ^+(c_0)\geqslant c_0(p-1)\}$.

Relation \eqref{E3.4} implies that modulo $\c M (p-1)$ 
\begin{equation} \label{E5.1}
 \sigma c_1^+-c_1^+\equiv 
\end{equation} 
$$-\sum_{\substack{k\geqslant 1\\ j\geqslant 0}}\frac{1}{k!}A_j(h)
\sum _{a_1,\dots ,a_k}t^{c_0+pj-(a_1+\dots +a_k)}
[\dots [a_1D_{a_10},D_{a_20}],\dots ,D_{a_k0}]
$$
$$-\sum _{m,k\geqslant 1}\frac{1}{k!}\sum _{a_1,\dots ,a_k}t^{pm-(a_1+\dots +a_k)}
[\dots [\sigma c_1(m),D_{a_10}],\dots ,D_{a_k0}]\,.$$
In both above sums the indices $a_1,\dots ,a_k$ run over 
$\Z ^0(p)$ with the restrictions $a_1+\dots +a_k<c_0+pj$ for the first sum 
and $a_1+\dots +a_k<pm$ for the second sum. 

Note that $c_1^+\,\op{mod}\,\c M(p-1)$ is defined uniquely by \eqref{E5.1}.  
Of course, it is obtained by applying the operator $\c S$ 
from Subsection \ref{S2.2} to the RHS of 
the above congruence. 

\begin{definition}
For $n^*\geqslant n_*$, let $\c F_{\gamma ,[n^*,n_*]}^0$ be the partial sum 
of $\sigma ^{n^*}\c F^0_{\gamma ,n_*-n^*}$ containing  
only the terms   
$[\dots [D_{a_1n_1},D_{a_2n_2}],\dots ,D_{a_sn_s}]$, such that  
$n_1=n^*$ and $n_s=n_*$. In other words, we keep only the terms such that 
$n^*=\op{max}\{n_i\ |\ 1\leqslant i\leqslant s\}$ and 
$n_*=\op{min}\{n_i\ |\ 1\leqslant i\leqslant s\}$. 
 \end{definition}

\begin{Prop} \label{P5.1} Let $N^0\in \N$ be such that $N^0\geqslant N^+(c_0)-1$. Then 
$$c_1^+\equiv  
\sum _{\substack{j\geqslant 0\\ 0\leqslant n\leqslant N^0}}
\sum _{\gamma <c_0+pj}\sigma ^n(A_j(h)
\c F^0_{\gamma ,-n})t^{p^n(c_0+pj-\gamma )}
\op{mod}\,\c M (p-1)\,.$$
\end{Prop}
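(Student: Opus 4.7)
My plan is to solve the congruence \eqref{E5.1} by iterated application of the splitting operator $\c S$ from Subsection \ref{S2.2}, then recognize the resulting sum as a repackaging in terms of the ramification generators $\c F^0_{\gamma,-n}$.

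First I would record the following observation. The right-hand side $B$ of \eqref{E5.1} is a sum of monomials $t^m\alpha$ with $m>0$, since in both sums the exponent of $t$ equals $c_0+pj-(a_1+\dots+a_k)$ or $pm-(a_1+\dots+a_k)$ and both are strictly positive by the restrictions imposed there. Hence $\c R(B)=0$ and Lemma \ref{L2.2}(a) gives $B=(\sigma-\id)(\c S(B))$; thus $c_1^+\equiv \c S(B)$ modulo $\c M(p-1)$ and the solution is unique in its positive-power part. The formula for $\c S$ on a positive power $n>0$ is explicit: $\c S(t^n\alpha)=-\sum_{i\geq 0}\sigma^i(t^n\alpha)$, so each application of $\c S$ inserts a sum over $\sigma$-shifts, which will be exactly the outermost index $n$ in $\c F^0_{\gamma,-n}$.

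Next I would induct on the commutator depth $s$ with $1\leq s<p$. In the base case $s=1$ only the $k=1$ term in the first sum contributes, giving $B_1=-\sum_{j,a_1}A_j(h)a_1 t^{c_0+pj-a_1}D_{a_10}$ with $a_1<c_0+pj$; applying $\c S$ yields $\sum_{n\geq 0,j,a_1}\sigma^n(A_j(h)a_1D_{a_10})\,t^{p^n(c_0+pj-a_1)}$, which coincides with the $s=1$ contribution to the claimed formula since for $s=1$ we have $\c F^0_{a_1,-n}=a_1D_{a_10}$ (the unique allowed level is $n_1=0$, with $\eta(0)=1$). For the inductive step, assume $c_1^+$ is known modulo commutators of depth $\geq s$. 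Substitute the inductive formula for $c_1(m)$ (after $\sigma$) into the second sum of \eqref{E5.1}, apply $\c S$ once more, and expand. Each substitution produces a new outer $\sigma^{n^*}$ shift via $\c S$ and a new outer commutator bracket nested around the existing ones; iterating $s-1$ times from the base case produces exactly the sum over chains $0=n_1\geq n_2\geq\cdots\geq n_s\geq -n$ appearing in the definition of $\c F^0_{\gamma,-n}$ in Subsection \ref{S1.4}.

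The hard part is verifying the combinatorial coefficient $\eta(n_1,\dots,n_s)=(s_1!\cdots(s_r-s_{r-1})!)^{-1}$. It arises as follows: every time several consecutive indices $n_i$ coincide (a run of length $s_i-s_{i-1}$), those brackets were produced by a single application of the recurrence with inner commutator depth $k=s_i-s_{i-1}$, contributing a single factor $1/k!$ from the first or the third sum on the right of \eqref{E5.1}; distinct levels correspond to separate applications of the recurrence and do not interact. One has to check that the product of these $1/k!$ factors across all $r$ runs matches $\eta(n_1,\dots,n_s)$, and that the vanishing of $\eta$ outside the regime $n_1=0\geq n_2\geq\cdots$ corresponds to the fact that $\c S$ only shifts by non-negative powers of $\sigma$ starting from the outermost index. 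Finally, the truncation $n<N^*$ is justified by the choice $N^*\geq\wt N(c_0,A)+1$ from Subsection \ref{S4.3}: for $n\geq N^*$ the exponent $p^n(c_0+pj-\gamma)$ is large enough that $\sigma^n(A_j(h)\c F^0_{\gamma,-n})\,t^{p^n(c_0+pj-\gamma)}$ falls into $\c M(p-1)$, so the tail of the $\c S$-sum disappears modulo $\c M(p-1)$. The main obstacle is the coefficient bookkeeping in the inductive step, which I would organize by the number $r$ of distinct index levels rather than by $s$ itself.
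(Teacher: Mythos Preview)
Your approach is essentially the same as the paper's: induct on commutator depth, feed the inductive formula for $c_1^+$ back into the right-hand side of \eqref{E5.1}, and apply $\c S$. The paper streamlines the coefficient bookkeeping you flag as ``the hard part'' by introducing the partial sums $\c F^0_{\gamma,[n^*,n_*]}$ (the piece of $\sigma^{n^*}\c F^0_{\gamma,-N^*}$ whose innermost index is $n^*$ and outermost is $n_*$); the recursion then becomes the single identity
\[
\c F^0_{\gamma,[n,0]}=\sum_{k\geqslant 1,\ \gamma'>0}\frac{1}{k!}\,[\dots[\sigma^n\c F^0_{\gamma',-(n-1)},D_{a_10}],\dots,D_{a_k0}]
\]
(with $p^n\gamma'+a_1+\dots+a_k=p^n\gamma$), and at the end one only needs the telescoping $\sum_{0\leqslant m\leqslant n_1}\c F^0_{\gamma,[0,-m]}=\c F^0_{\gamma,-n_1}$. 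This packages exactly your observation that each run of equal $n_i$'s contributes one $1/k!$, without having to track $\eta(n_1,\dots,n_s)$ by hand.

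One imprecision: your justification of the truncation $n<N^*$ cites the condition $N^*\geqslant\wt N(c_0,A)+1$, but that is the stabilisation bound for the ramification ideals and is not what forces the tail into $\c M(p-1)$. What you need is that $c_0+pj-\gamma$ is bounded below by $\delta$ whenever the term is nonzero (cf.\ the remark after Theorem~\ref{T4.4}), so that $p^n(c_0+pj-\gamma)\geqslant p^n\delta>c_0(p-1)$ once $n\geqslant\log_p(c_0(p-1)/\delta)$; this bound is also met by the $N^*$ of Subsection~\ref{S4.3}, but via the inequalities in part d) there rather than part c).
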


\begin{remark} 
 The RHS of the above congruence does not depend 
on a choice of $N^0\geqslant N^+(c_0)-1$. 
\end{remark}

\begin{proof}[Proof of Proposition] 
Prove proposition by establishing the formula for $c_1^+$ modulo 
$\c M (p-1)+C_s(\c L_{\c K})$ by induction on $1\leqslant s\leqslant p$. 

If $s=1$ there is nothing to prove. 

Suppose $s<p$ and proposition is proved 
modulo $\c M (p-1)+C_s(\c L_{\c K})$. Prove that modulo 
$\c M (p-1)+C_{s+1}(\c L_{\c K})$ 
\begin{equation} \label{E5.2} \sigma c_1^+-c_1^+
\equiv -\sum _{\substack{j\geqslant 0\\ 0\leqslant n\leqslant N^0}}
\sigma ^n(A_j(h))
\sum _{\gamma <c_0+pj}\c F^0_{\gamma ,[n,0]}t^{p^n(c_0+pj-\gamma )}\, .
\end{equation}

Note that for $n=0$, 
$$\c F^0_{\gamma ,[0,0]}=\sum _{a_1,\dots ,a_k}
\frac{1}{k!}[\dots [a_1D_{a_10},D_{a_20}],
\dots ,D_{a_k0}]$$ 
and for $n>0$, 
$$\c F^0_{\gamma ,[n,0]}=
\sum _{\substack{k\geqslant 1, \gamma '>0\\ a_1,\dots ,a_k}}
\frac{1}{k!}[\dots [\sigma ^n\c F^0_{\gamma ',-(n-1)},D_{a_10}],\dots ,D_{a_k0}]\,.$$

In both sums the indices $a_1,\dots ,a_k$ run over 
$\Z ^0(p)$ with the restrictions 
$a_1+\dots +a_k=\gamma $ in the first case and 
$p^n\gamma '+a_1+\dots +a_k=p^n\gamma $ in the second case. 

The first formula allows us to identify the first line of the RHS in \eqref{E5.1} 
with the part of \eqref{E5.2} which corresponds to $n=0$. 
The second formula  
allows us to rewrite modulo $C_{s+1}(\c L_{\c K})$ 
the second line of the RHS in \eqref{E5.1} (under inductive assumption)  
as the part of \eqref{E5.2} which corresponds to $n>0$.

Denote by $-\Omega $ the right-hand side of \eqref{E5.2}. 
Applying $\c S$ we obtain that  
modulo $\c M (p-1)+C_{s+1}(\c L_{\c K})$ it holds  
$c_1^+\equiv \sum _{m\geqslant 0}\sigma ^m\Omega $ 
and  

$$c_1^+\equiv \sum _{n,m,j}\sum _{\gamma <c_0+pj}
\sigma ^{n+m}\left (A_j(h)\c F^0_{\gamma ,[0,-n]}\right )t^{p^{n+m}(c_0+pj-\gamma )}
\,.$$
Modulo $\c M (p-1)$ we can assume that 
$n_1=n+m\leqslant N^0$ 
and rewrite the above RHS as 
$$\sum _{\gamma ,j,n_1}\sigma ^{n_1}\left (A_j(h)\sum _{0\leqslant m\leqslant n_1}
\c F^0_{\gamma ,[0,-m]}\right )t^{p^{n_1}(c_0+pj-\gamma )}\,.$$
It remains to note that $\sum _{0\leqslant m\leqslant n_1}
\c F^0_{\gamma ,[0,-m]}=\c F^0_{\gamma ,-n_1}$. 

The proposition is proved. 
\end{proof} 

\subsection{Explicit calculations with $c_1(0)$} \label{S5.2} 

By \eqref{E3.4} we have modulo $\c L(p)_{k}$ that (here $V_0=\alpha _0^{-1}V_{00}=\ad h(p)(D_0)$)
\begin{equation} \label{E5.3} 
 \sigma c_1(0)-c_1(0)+\alpha _0V_0\equiv \qquad\qquad\qquad\qquad\qquad\qquad\qquad  
\end{equation}
$$-\sum _{\substack{k\geqslant 1\\ j\geqslant 0}}
\sum _{a_1,\dots ,a_k}\frac{1}{k!}A_j(h)[\dots [a_1D_{a_10},D_{a_20}],\dots ,D_{a_k0}]$$
$$-\sum _{\substack{k,m\geqslant 1\\ a_1,\dots ,a_k}}\frac{1}{k!} 
[\dots [\sigma c^+_1(m),D_{a_10}],\dots ,D_{a_k0}]$$
$$-\sum _{k\geqslant 2}\frac{1}{k!}[\dots [V_0,\underset{k-1\ \text{times}} 
{\underbrace{D_{00}],\dots ,D_{00}}}]$$
$$\qquad\qquad -\sum _{k\geqslant 1}\frac{1}{k!}[\dots [\sigma c_1(0),
\underset{k \ \text{times}}{\underbrace{D_{00}],\dots ,D_{00}}}]$$

In the first and second sums the indices $a_i$ run over $\Z ^0(p)$ 
with the restrictions $a_1+\dots +a_k=c_0+pj$ in the first case and 
$a_1+\dots +a_k=pm$ in the second case. 

\begin{definition}
 For $n\geqslant 0$, denote by $\c F^+_{\gamma ,[n,0]}$ the partial sum 
of $\c F^0_{\gamma ,[n,0]}$ which contains only the terms with  
$[\dots [D_{a_1n_1},D_{a_2n_2}],\dots ,D_{a_sn_s}]$ such that 
if for some $i_1\geqslant 0$, $0=n_s=\dots =n_{s-i_1}<n_{s-i_1-1}$ 
then at least 
one of $a_s,\dots ,a_{s-i_1}$ is not zero. 
\end{definition}

Fix $N^0\geqslant N^+(c_0)-1$. 

\begin{Lem}\label{L5.2} The sum of the first two lines in  
the RHS of \eqref{E5.3} equals 
$$-\sum _{\substack {0\leqslant n\leqslant N^0\\ j\geqslant 0}}\sigma ^n(A_j(h))
\c F^+_{c_0+pj,[n,0]}$$ 
\end{Lem}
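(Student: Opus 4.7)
The plan is to substitute the expression for $c_1^+$ from Proposition \ref{P5.1} into the second line of the RHS of \eqref{E5.3}, and to recognize the resulting commutators, together with the terms of the first line, as the partial sums $\c F^+_{c_0+pj,[n,0]}$. The first line is treated directly: by the formula for $\c F^0_{\gamma,[0,0]}$ recorded in the proof of Proposition \ref{P5.1}, it equals $-\sum_{j\ge 0}A_j(h)\c F^0_{c_0+pj,[0,0]}$. For $n=0$ every kept term has all $n_i=0$, so there is no jump to a higher index, the constraint defining $\c F^+$ is vacuous, and this matches the $n=0$ summand of the claim.

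For the second line, Proposition \ref{P5.1} gives that the coefficient of $t^m$ in $c_1^+$ is $\sum\sigma^n(A_j(h)\c F^0_{\gamma,-n})$ over triples $(j,n,\gamma)$ with $p^n(c_0+pj-\gamma)=m$, so the coefficient of $t^{pm}$ in $\sigma c_1^+$ is the same sum with $\sigma^{n+1}$. Plugging this into the second line and reindexing $n':=n+1$ (with any boundary contribution at $n'=N^*$ absorbed into $\c L(p)_k$ by the choice of $N^*$) produces a sum of brackets $[\dots[\sigma^{n'}(A_j(h)\c F^0_{\gamma,-(n'-1)}),D_{a'_10}],\dots,D_{a'_k0}]$ with $a'_1+\dots+a'_k=p^{n'}(c_0+pj-\gamma)>0$. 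Expanding the head produces commutators $[\dots[D_{b_1\bar m_1},\dots,D_{b_t\bar m_t}]$ whose index tuple $(m_1,\dots,m_t)$ has $m_1=n'$, trailing run of length exactly $k$ at $m_i=0$ and head indices $m_i\in[1,n']$ for $i\le t-k$. This is precisely the shape of a generic term of $\c F^+_{c_0+pj,[n',0]}$: the head/trailing decomposition is unique, and the $\c F^+$ constraint --- that at least one of the trailing $a_i$ be nonzero --- is automatic from $a'_1+\dots+a'_k>0$.

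The main obstacle is matching the combinatorial coefficients. For a fixed ordered commutator the coefficient in $\c F^+_{c_0+pj,[n',0]}$ is $b_1\eta(m_1,\dots,m_t)$. Since the trailing block of zeros has length exactly $k$ and the remaining blocks lie within the head, shift invariance of the block-size formula defining $\eta$ gives
\begin{equation*}
\eta(m_1,\dots,m_t)=\eta(m_1-n',\dots,m_{t-k}-n')\cdot\frac{1}{k!},
\end{equation*}
where the first factor is exactly the coefficient with which the head appears inside $\c F^0_{\gamma,-(n'-1)}$. Combining this with the $1/k!$ in the second line of \eqref{E5.3} and summing over all orderings of the trailing $k$-tuple $(a'_1,\dots,a'_k)$ reconstructs each ordered commutator of $\c F^+$ with the correct multiplicity. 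The delicate point is that no mixing across the head/trailing boundary occurs: the head has all $m_i\ge 1$ whereas the trailing block lies at $m_i=0$, which forces the block decomposition at that boundary and makes the factorization of $\eta$ unambiguous.
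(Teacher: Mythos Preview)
Your proof is correct and follows essentially the same approach as the paper's. The paper's proof is extremely terse: it simply notes that the first line matches the $n=0$ case by definition, and for the second line it states (without verification) the identity
\[
\sum_{\substack{k\geqslant 1\\ \gamma,\,a_1,\dots ,a_k}}\frac{1}{k!}
[\dots [\sigma ^{n+1}\c F^0_{\gamma ,-n}, D_{a_10}],\dots ,D_{a_k0}]
=\c F^+_{c_0+pj,[n+1,0]},
\]
with $a_1+\dots +a_k=p^{n+1}(c_0+pj-\gamma)$. You do the same substitution of $c_1^+$ from Proposition~\ref{P5.1} and the same reindexing $n'=n+1$, but you go further and actually justify this identity via the block factorisation $\eta(m_1,\dots ,m_t)=\eta(m_1,\dots ,m_{t-k})\cdot (1/k!)$, which the paper leaves implicit. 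One small remark: your phrase ``summing over all orderings of the trailing $k$-tuple'' is slightly misleading --- both sides are already sums over \emph{ordered} tuples $(a'_1,\dots ,a'_k)$, so the matching is term-by-term and no symmetrisation is needed; the $1/k!$ in \eqref{E5.3} matches directly the trailing-block factor of $\eta$.
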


\begin{proof} For the first line use the above definition with $n=0$. 

For the second line use the following 
identity 
$$\sum _{\substack{k\geqslant 1\\a_1,\dots ,a_k}}(1/k!)
[\dots [\sigma ^{n}\c F^0_{\gamma ,-n+1}, D_{a_10}],\dots ,D_{a_k0}]=
\c F^+_{c_0+pj,[n,0]}$$
where $n\in\N $, $\gamma <c_0+pj$ and $a_1,\dots ,a_k$ run over 
$\Z ^0(p)$ such that $a_1+\dots +a_k=p^{n}(c_0+pj-\gamma )$.  
 \end{proof}
  
Introduce the operators 
$$G_0=\wt{\exp}\,(\alpha _0\,\ad D_{0}),\ \ \  
F_0=E_0(\alpha _0\,\op{ad}D_{0})$$ 
on $\c L_k$ (recall that $E_0(x)=(\wt{\exp}x-1)/x$). Note that for $l\in\c L_k$, 
$$F_0(l)=\sum _{k\geqslant 1}\frac{\alpha _0^{k-1}}{k!}
[\dots [l,\underset{k-1\ \text{times}}
{\underbrace{D_{0}],\dots ,D_{0}}}],\ \ 
G_0(l)=\sum _{k\geqslant 0}\frac{\alpha _0^k}{k!}[\dots [l,
\underset{k\ \text{times}}{\underbrace{D_{0}],\dots ,D_{0}}}]\, .$$

With this notation we can rewrite \eqref{E5.3} in the following form 
$$(G_0\sigma -\id )c_1(0)+F_0(\alpha _0V_0)=
-\sum _{j\geqslant 0}
\sum _{0\leqslant i\leqslant N^0}\sigma ^i(A_j(h))\c F^+_{c_0+pj,[i,0]}$$

\begin{Lem} \label{L5.3}
 Suppose $l(\alpha ,\gamma )=
\sum _{0\leqslant i\leqslant N^0}\sigma ^i(\alpha\c F^0_{\gamma ,-i})$, 
where $\alpha\in k$. Then 
$$(G_0\sigma -\id )l(\alpha ,\gamma )=
-\sum _{0\leqslant i\leqslant  N^0}\sigma ^i(\alpha )\c F^+_{\gamma ,[i,0]}
+G_0\sigma ^{N^0+1}(\alpha\c F^0_{\gamma ,-N^0})$$
\end{Lem}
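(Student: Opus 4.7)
The plan is to verify the identity by direct combinatorial computation, applying $G_0\sigma$ summand-by-summand to $l(\alpha,\gamma)=\sum_{i=0}^{N^*-1}\sigma^i(\alpha\c F^0_{\gamma,-i})$ and then telescoping.

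\textbf{Step 1 (Unpacking $G_0\sigma$ on a single summand).} Using $\sigma$-linearity and $k$-linearity of $G_0$, and recalling the explicit formula $G_0(X)=\sum_{0\le k<p}\frac{1}{k!}[\ldots[X,D_{00}],\ldots,D_{00}]$ with $k$ trailing factors, I would first rewrite
\[
G_0\sigma(\sigma^i(\alpha\c F^0_{\gamma,-i}))=\sigma^{i+1}(\alpha)\sum_{k\ge 0}\frac{1}{k!}\bigl[\sigma^{i+1}\c F^0_{\gamma,-i},D_{00},\ldots,D_{00}\bigr].
\]
The key identity is $D_{00}=\sigma^{i+1}(D_{0,-(i+1)})$, which follows from $D_{0,n}=\sigma^n(\alpha_0)D_0$. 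This lets me pull $\sigma^{i+1}$ outside and reinterpret the bracket in the ``unshifted'' frame as appending $k$ copies of $D_{0,-(i+1)}$ to each monomial of $\c F^0_{\gamma,-i}$.

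\textbf{Step 2 (Matching the $\eta$-coefficients).} For each monomial $a_1\eta(n_1,\ldots,n_s)[D_{a_1\bar n_1},\ldots,D_{a_s\bar n_s}]$ appearing in $\c F^0_{\gamma,-i}$ (so $n_s\ge -i>-(i+1)$), the extended monomial with $k$ trailing $D_{0,-(i+1)}$'s has the extended index tuple satisfying $\sum a_jp^{n_j}+0\cdot k=\gamma$; since the appended entries $-(i+1)$ form a \emph{new strictly smaller level} of size $k$, the combinatorial factor in $\c F^0_{\gamma,-(i+1)}$ is $a_1\eta(n_1,\ldots,n_s,-(i+1),\ldots,-(i+1))=a_1\eta(n_1,\ldots,n_s)/k!$, matching the $1/k!$ weight in $G_0$. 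This identifies
\[
G_0\sigma(\sigma^i(\alpha\c F^0_{\gamma,-i}))=\sigma^{i+1}(\alpha\c G_{\gamma,-(i+1)}),
\]
where $\c G_{\gamma,-(i+1)}\subset\c F^0_{\gamma,-(i+1)}$ is the sub-sum consisting of monomials whose trailing $-(i+1)$-block (if non-empty) is entirely $D_{0,-(i+1)}$. Writing $\c G_{\gamma,-(i+1)}=\c F^0_{\gamma,-i}+\c H_{\gamma,-(i+1)}$ (splitting by whether the trailing $-(i+1)$-block is empty or not) then gives
\[
G_0\sigma(\sigma^i(\alpha\c F^0_{\gamma,-i}))=\sigma(\sigma^i(\alpha\c F^0_{\gamma,-i}))+\sigma^{i+1}(\alpha)\sigma^{i+1}\c H_{\gamma,-(i+1)}.
\]

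\textbf{Step 3 (Telescoping).} Summing over $i$ and subtracting $l$, I obtain
\[
(G_0\sigma-\id)l=(\sigma-\id)l+\sum_{j=1}^{N^*}\sigma^j(\alpha)\sigma^j\c H_{\gamma,-j}.
\]
Computing $(\sigma-\id)l$ requires rewriting $\sigma^j\c F^0_{\gamma,-(j-1)}$ as $\sigma^j\c F^0_{\gamma,-j}$ minus its $n_s=-j$ part. The latter decomposes as $\sigma^j\c H_{\gamma,-j}$ (trailing block all $D_{0,-j}$) plus $\c F^+_{\gamma,[j,0]}$ (trailing block with at least one non-zero $a$); this is precisely the definition of $\c F^+$. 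The $\c H$-contributions then cancel against the $\c H$-sum from Step 2, and the $\sigma^j\c F^0_{\gamma,-j}$ terms telescope against those in $l$ over $j=1,\ldots,N^*$ versus $i=0,\ldots,N^*-1$.

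\textbf{Step 4 (Boundary matching).} The telescoping leaves boundary terms $\sigma^{N^*}(\alpha)\sigma^{N^*}\c F^0_{\gamma,-N^*}-\alpha\c F^0_{\gamma,0}$ plus the $\c F^+$-sum $-\sum_{j=1}^{N^*}\sigma^j(\alpha)\c F^+_{\gamma,[j,0]}$. I then use (i) $\c F^+_{\gamma,[0,0]}=\c F^0_{\gamma,0}$ (automatic, since every monomial in $\c F^0_{\gamma,\cdot}$ has $a_1\neq 0$, so the ``+''-condition on the trailing zero-block is vacuous when $n_1=n_s=0$) to absorb the $i=0$ boundary into the sum; and (ii) the decomposition $\sigma^{N^*}\c F^0_{\gamma,-N^*}=\sigma^{N^*}\c F^0_{\gamma,-\wt N}+\c F^0_{\gamma,[N^*,0]}$ combined with $\c F^0_{\gamma,[N^*,0]}=\c F^+_{\gamma,[N^*,0]}+\sigma^{N^*}\c H_{\gamma,-N^*}$, to convert the upper-boundary $\sigma^{N^*}\c F^0_{\gamma,-N^*}$ minus $\c F^+_{\gamma,[N^*,0]}$ into $\sigma^{N^*}(\alpha\c F^0_{\gamma,-\wt N})$. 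This delivers exactly the RHS of the lemma.

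\textbf{Main obstacle.} The technical heart is Step 2: verifying rigorously that the $1/k!$ from the truncated exponential matches the new $\eta$-factor introduced by appending a strictly lower level. Equally delicate is Step 4, where one must check that the residual $\sigma^{N^*}\c H_{\gamma,-N^*}$ produced at the upper boundary is exactly consumed by the difference $\c F^0_{\gamma,[N^*,0]}-\c F^+_{\gamma,[N^*,0]}$ (so that the two instances of $\c H_{\gamma,-N^*}$ coming from different sources cancel precisely). This is the reason why $N^*$ --- rather than an arbitrary cutoff --- appears, and why the residual index on the right-hand side is shifted down to $\wt N=N^*-1$.
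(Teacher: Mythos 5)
Your proof follows the same route as the paper: your Step~2 identity (which in the paper's notation reads $(G_0\sigma)(\sigma^i\c F^0_{\gamma,-i})=\sigma^{i+1}\c F^0_{\gamma,-(i+1)}-\c F^+_{\gamma,[i+1,0]}$, equivalently your $\sigma^{i+1}\c F^0_{\gamma,-i}+\sigma^{i+1}\c H_{\gamma,-(i+1)}$) is precisely the paper's ``directly from definitions'' step, and the telescoping over $i=0,\dots,N^*-1$ in Steps 3--4 is the same. Your Step~2 discussion of why the $1/k!$ from the truncated exponential matches the appended $\eta$-factor is the right verification there.

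There is, however, a circularity in your ``Main obstacle'' discussion of Step~4. Tracing the computation carefully: after the $\c H$-contributions from Step~2 and Step~3 cancel exactly over $j=1,\dots,N^*$ (they do, as you say), what remains is $\sigma^{N^*}(\alpha\c F^0_{\gamma,-N^*})-\sum_{j=0}^{N^*}\sigma^j(\alpha)\c F^+_{\gamma,[j,0]}$. Matching this to the lemma's right-hand side requires precisely $\c F^0_{\gamma,[N^*,0]}=\c F^+_{\gamma,[N^*,0]}$, i.e.\ $\sigma^{N^*}\c H_{\gamma,-N^*}=0$. You claim this residual ``is exactly consumed by the difference $\c F^0_{\gamma,[N^*,0]}-\c F^+_{\gamma,[N^*,0]}$'' with ``two instances of $\c H_{\gamma,-N^*}$ coming from different sources'' cancelling --- but by your own Step~3 decomposition $\c F^0_{\gamma,[N^*,0]}-\c F^+_{\gamma,[N^*,0]}$ \emph{is} $\sigma^{N^*}\c H_{\gamma,-N^*}$. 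There is only one copy and no cancellation: ``$A$ is consumed by $A$'' is vacuous. What is actually needed is a direct argument that $\sigma^{N^*}\c H_{\gamma,-N^*}$ itself vanishes (or is negligible in the relevant quotient). The paper's proof ends with the same unargued assertion ``$-\c F^+_{\gamma,[N^*,0]}+\sigma^{N^*}\c F^0_{\gamma,-N^*}=\sigma^{N^*}\c F^0_{\gamma,-\wt N}$'', so you are not doing worse than the source --- but your ``two instances cancel'' explanation does not fill that gap and should be replaced by a genuine justification of the vanishing.
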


\begin{proof}[Proof of lemma] 
Directly from definitions it follows for $i\geqslant 0$, that 
$(G_0\sigma )(\sigma ^i\c F^0_{\gamma ,-i})=
\sigma ^{i+1}\c F^0_{\gamma ,-(i+1)}-\c F^+_{\gamma ,[i+1,0]}$. 
Therefore, 
$$(G_0\sigma )l(\alpha ,\gamma )=
\sum _{1\leqslant i\leqslant N^0+1}\sigma ^{i}(\alpha\c F^0_{\gamma ,-i})
-\sum _{1\leqslant i\leqslant N^0+1}(\sigma ^i\alpha )\c F^+_{\gamma ,[i,0]}$$
$$=l(\alpha ,\gamma )-\sum _{0\leqslant i\leqslant N^0}(\sigma ^i\alpha )\c F^+_{\gamma ,[i,0]}
+\sigma ^{N^0+1}(\alpha )\left (-\c F^+_{\gamma ,[N^0+1,0]}+
\sigma ^{N^0+1}\c F^0_{\gamma ,-(N^0+1)}\right )\, .$$
It remains to note that $-\c F^+_{\gamma ,[N^0+1,0]}+\sigma ^{N^0+1}\c F_{\gamma ,-(N^0+1)}=
G_0\sigma ^{N^0+1}\c F^0_{\gamma ,-N^0}$. 
\end{proof} 

Summarize the above calculations.

\begin{Prop} \label{P5.4} Suppose $h(p)$ is a lift of $h$ to $\c K(p)$ 
with the ``linear ingredient''   
$c_1=c_1^-+c(0)+c_1^+$, $V_0=(\op{ad}\,h(p))D_0$ and $N^0\geqslant N^+(c_0)-1$. Then    
$$c_1(0)=c^0+
\sum _{\substack{0\leqslant i\leqslant N^0 \\ j\geqslant 0}}
\sigma ^i(A_j(h)\c F^0_{c_0+pj,-i})\in\bar{\c L}_k\, ,$$  
where  $c^0\in\bar{\c L}_k$ and $V_0\in\bar{\c L}$ are arbitrary solutions of the equation    
\begin{equation} \label{E5.4} 
(G_0\sigma -\id )c^0+F_0(\alpha _0V_0)=-G_0\sigma ^{N^0+1}\Omega ^0\,,
\end{equation} 
with $\Omega ^0=\sum _{j\geqslant 0}
A_j(h)\c F^0_{c_0+pj,-N^0}.$ 
\end{Prop}

\begin{remark}  a) Modulo $[\bar{\c L}_k,D_0]$  
equation \eqref{E5.4} looks like 
$$(\sigma -\id )c^0+\alpha _0V_0\equiv -\sigma ^{N^0+1}\Omega ^0\, ,$$ 
and, therefore, admits explicit solutions 
(use the operators $\c R$ and $\c S$ from Subsection \ref{S2.2} and Lemma \ref{L2.2}b). 
This implies  $V_0=\op{ad}\,h(p)(D_0)$ is congruent modulo $[\c L_k,D_0]$ to 
(recall that $|k|=p^{N_0}$)
$$-(\id _{\c L}\otimes \op{Tr}_{k/\F _p})(\sigma ^{N^0+1}\Omega ^0)
\equiv -\sum _{0\leqslant n<N_0}\sigma ^n(\Omega ^0);$$ 
\medskip 

 b)  if $k=\F _p$ then \eqref{E5.4} can be solved: here 
 $\sigma =\id $ and we can set $c^0=-\Omega ^0(=-\sigma ^{N^0}\Omega ^0)$; 
 this implies the existence of a lift $h(p)$ such that the Demushkin relation 
 appears in the form 
$$\op{ad}h(p)(D_0)+F_0^{-1}(\Omega ^0)=0\, ;$$  
\medskip 

c) the appearance of operators $F_0$ and $G_0$ in the LHS of \eqref{E5.4} is related to 
a ``bad influence'' of the generators $D_{0n}$; 
this influence can be seen already at the explicit expressions of the elements 
$\c F^0_{\gamma ,-N}$ from Subsection \ref{S1.4}: the elements of the form $D_{0n}$ 
don't contribute to $\gamma $ and therefore can appear with almost no restrictions 
in all terms of $\c F^0_{\gamma ,-N}$; e.g. if $a\in\Z ^0(p)$ then 
$\c F^0_{a,-N}$ contains together with the linear term $aD_{a0}$ 
all terms from    
$(\sigma ^{-N}G_0)(\sigma ^{-N+1}G_0)\dots (\sigma ^{-1}G_0)F_0(aD_{a0})$.
\end{remark} 

Finally note that Proposition \ref{P5.4} allows us to control 
arithmetic lifts of $h$ if we require also that  
$N^0\geqslant\wt{N}(c_0)$, cf. Subsection \ref{S1.4} for the definition of $\wt{N}(c_0)$. 

\begin{Prop} \label{P5.5} 
Suppose $N^0\geqslant\max\{N^+(c_0)-1, \wt{N}(c_0)\}$. Then 
\eqref{E5.4} admits a solution $c^0\in\bar{\c L}^{(c_0)}_k$ and $V_0\in\bar{\c L}^{(c_0)}$  
and the corresponding lift $h(p)$ is arithmetical. 
\end{Prop}

\begin{proof}
For $n\geqslant 1$, define the triples $(X_n,Y_n,Z_n)$ by the following recurrent relations:
\medskip 

$Z_1=-G_0\sigma ^{N^0+1}\Omega ^0$, \ \ $X_n=\c S(Z_n)$, \ \ $Y_n=\alpha _0^{-1}\c R(Z_n)$ 
\medskip 

$Z_{n+1}=
-(G_0-\id )\sigma X_n-(F_0-\id )(\alpha _0Y_n)\, .$ 
\medskip 

Then is it easy to see that:
\medskip 

1)\ for all $n$, $Z_n, X_n\in (\ad ^{n-1} D_0)\bar{\c L}_k^{(c_0)}$ and 
$Y_n\in (\ad ^{n-1} D_0)\bar{\c L}^{(c_0)}$;
\medskip 

2)\ $c^0=X_1+\dots +X_{p-1}$ and $V_0=Y_1+\dots +Y_{p-1}$ satisfy \eqref{E5.4}. 
\medskip 

Indeed, for any ideal $\c L'$ in $\bar{\c L}$ and $n\geqslant 1$, 
the operators $\c R$ and $\c S$ map $(\ad ^{n-1}D_0)\c L'_k$ to itself and 
the operators $G_0-\id $ and $F_0-\id $ map $(\ad ^{n-1}D_0)\c L'_k$ 
to $(\ad ^{n}D_0)\c L'_k$. This proves the first property. 

As for the second property, proceed as follows: 

$$\sum _{1\leqslant i<p}(G_0\sigma -\id )X_i+\sum _{1\leqslant i<p}F_0(\alpha _0Y_i)$$
$$=\sum _{1\leqslant i<p}(G_0 -\id )\sigma X_i+\sum _{1\leqslant i<p}(F_0-\id )(\alpha _0Y_i)
+\sum _{1\leqslant i<p}\left ((\sigma -\id )X_i+\alpha _0Y_i\right )$$
$$=-(Z_2+\dots +Z_{p-1}+Z_p)+(Z_1+Z_2+\dots +Z_{p-1})=Z_1\,.$$

Finally Theorem \ref{T4.8}c) implies that the appropriate lift $h(p)$ is arithmetical.
\end{proof}

\medskip

\section{Applications to the mixed characteristic case} \label{S6} 

Let $K$ be a finite field extension of $\Q _p$ with the residue field 
$k\simeq \F _{p^{N_0}}$ and the ramification index $e_K$. Let $\pi _0$ be 
a uniformising element in $K$. Denote by $\bar K$ an algebraic closure of $K$, set 
$\Gamma _K=\Gal (\bar K/K)$ and denote by $I_K$ the inertia 
subgroup of $\Gamma _K$. We assume that $K$ contains a primitive 
$p$-th root of unity $\zeta _1$.

\subsection{An exact sequence for $\Gamma _{<p}$} \label{S6.1} 
For $n\in\N $, choose $\pi _n\in\bar K$ such that $\pi _n^p=\pi _{n-1}$. 
Let $\wt{K}=\bigcup _{n\in\N }K(\pi _n)$,  
$\Gamma _{<p}:=\Gamma _K/\Gamma _K^pC_p(\Gamma _K)$ and $\Gamma _{\wt{K}}=
\Gal (\bar K/\wt{K})$. Then a natural embedding $\Gamma _{\wt{K}}\subset\Gamma _K$ 
induces a continuous group homomorphism 
$\iota :\Gamma _{\wt{K}}\To \Gamma _{<p}$.  

We have $\Gal (K(\pi _1)/K)=\langle \tau _0\rangle ^{\Z /p}$, where $\tau _0(\pi _1)
=\pi _1\zeta _1$. Let $j:\Gamma _{<p}\To \Gal (K(\pi _1)/K)$ be a natural epimorphism. 

\begin{Prop} \label{P6.1}
 The following sequence 
$$\Gamma _{\wt{K}}\overset{\iota }\To \Gamma _{<p}\overset{j}
\To \langle \tau _0\rangle ^{\Z /p}\To 1$$ 
is exact. 
\end{Prop}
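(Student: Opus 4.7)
The plan is to verify the three standard requirements for exactness: surjectivity of $j$, vanishing of $j\circ\iota$, and $\Ker j \subset \mathrm{Im}\,\iota$.

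The first two are formal. Since $\pi_1^p=\pi_0\in K$ and $\zeta_1\in K$, Kummer theory makes $K(\pi_1)/K$ cyclic of order $p$, hence of period $p$ and nilpotent class $1<p$, so $K(\pi_1)\subset K_{<p}$. Thus $\Gamma_K^pC_p(\Gamma_K)\subset \Gamma_{K(\pi_1)}$, making $j$ well defined; surjectivity is inherited from the natural restriction $\Gamma_K\twoheadrightarrow\Gal(K(\pi_1)/K)$. For $j\circ\iota=0$: any $g\in\Gamma_{\wt K}$ fixes $\wt K\supset K(\pi_1)$, so $j(\iota(g))=1$.

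The substance is $\Ker j \subset \mathrm{Im}\,\iota$. By the Galois correspondence for $K_{<p}/K$ this reduces to $K_{<p}\cap\wt K = K(\pi_1)$, and $\supset$ is clear. For $\subset$, take $\alpha\in K_{<p}\cap\wt K$, so $\alpha\in K(\pi_n)$ for some $n$. Two claims finish the argument: (a) every intermediate field $K\subset F\subset K(\pi_n)$ is $K(\pi_m)$ for some $0\leqslant m\leqslant n$; (b) $K(\pi_m)\not\subset K_{<p}$ for $m\geqslant 2$. Claim (a) goes by induction on $n$: the nontrivial case is $F\cdot K(\pi_{n-1})=K(\pi_n)$, where $[F:F\cap K(\pi_{n-1})]=p$ with $F\cap K(\pi_{n-1})=K(\pi_m)$ by induction; inside the Galois closure $K(\pi_n,\zeta_{p^{n-m}})/K(\pi_m)$, a semidirect product of the Kummer group $\Z/p^{n-m}$ with the cyclotomic part, subgroups containing the stabilizer of $\pi_n$ are in bijection with subgroups of $\Z/p^{n-m}$, yielding a unique degree-$p$ extension, namely $K(\pi_{m+1})$.

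For (b), $K_{<p}/K$ is Galois, so it suffices to show $K(\pi_m,\zeta_{p^m})/K$ has exponent $>p$. The key input is $[K(\pi_m,\zeta_{p^m}):K(\zeta_{p^m})]=p^m$, equivalently $\pi_1\notin K(\zeta_{p^m})$: were $\pi_1\in K(\zeta_{p^m})$, then by degree considerations $K(\pi_1)$ would equal the unique degree-$p$ subextension $K(\zeta_{p^{c+1}})$ of the cyclotomic tower (with $c$ defined by $\zeta_{p^c}\in K$ and $\zeta_{p^{c+1}}\notin K$); but Kummer would then demand $\pi_0\equiv\zeta_{p^c}^a\pmod{K^{*p}}$ for some $a$, contradicting $v_K(\pi_0)=1\not\equiv 0\pmod p$. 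The Kummer generator $\sigma\colon\pi_m\mapsto\zeta_{p^m}\pi_m$ fixing $\zeta_{p^m}$ then has order $p^m\geqslant p^2$. Combining (a) and (b) forces $K(\alpha)=K(\pi_k)$ with $k\leqslant 1$, so $\alpha\in K(\pi_1)$. The main obstacle is step (a)---the classification of the generally non-Galois intermediate fields of $K(\pi_n)/K$---while step (b) rests on a soft but essential valuation comparison.
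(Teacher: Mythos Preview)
Your proof is correct in substance and takes a genuinely different route from the paper. The paper works inside the Galois closure $\wt K' = \bigcup_n K(\pi_n,\zeta_n)$: it identifies $\Gal(\wt K'/K)$ as the metabelian pro-$p$ group $\langle\sigma,\tau\mid \sigma^{-1}\tau\sigma=\tau^{(1+ps_0)^{-1}}\rangle$, computes $\Gamma_{\wt K'/K}^{\,p}C_p(\Gamma_{\wt K'/K})=\langle\sigma^p,\tau^p\rangle$, and then observes that $\Gamma_{\wt K}$ is generated by $\Gamma_{\wt K'}$ together with a lift $\hat\sigma\in\Gamma_{\wt K}$ of $\sigma$; this produces the kernel of $j$ in one stroke. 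Your reduction to the field-theoretic statement $K_{<p}\cap\wt K=K(\pi_1)$ is more hands-on and arguably more elementary. Both approaches ultimately rest on the same valuation input, namely $\pi_0\notin K(\zeta_{p^\infty})^{*p}$, which is what makes the Kummer tower genuinely of rank $\Z_p$ over the cyclotomic tower.

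One point in step (a) should be tightened. The assertion $[F:F\cap K(\pi_{n-1})]=p$ is not justified as stated, and the induction on $n$ is an unnecessary detour. Your own semidirect-product observation already does all the work when applied directly over $K$: with $G=\Gal(K(\pi_n,\zeta_{p^n})/K)=N\rtimes H$, where $N\cong\Z/p^n$ is the Kummer part (this uses the irreducibility you prove in (b)) and $H=\Gal(K(\pi_n,\zeta_{p^n})/K(\pi_n))$ is a complement, the subgroups of $G$ containing $H$ are exactly $N'\rtimes H$ for $N'\leqslant N$ (every subgroup of the cyclic group $N$ is $H$-stable). Hence the intermediate fields of $K\subset K(\pi_n)$ already form a chain of length $n+1$ and must coincide with the $K(\pi_m)$. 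Note also that this semidirect-product structure in (a) presupposes the irreducibility established in (b), so logically (b) should be invoked first.
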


\begin{proof}
For $n\geqslant 2$, let $\zeta _n\in\bar K$ be such that $\zeta _n^{p}=\zeta _{n-1}$.  

Consider $\wt{K}'=\bigcup _{n}K(\pi _n,\zeta _n)$. Then 
$\wt {K}'/K$ is Galois with the Galois group 
$\Gamma _{\wt{K}'/K}=\langle \tilde \sigma, \tilde\tau _0\rangle $. 
Here for any $n\in\N $ and some $s_0\in \Z$, 
$\tilde\sigma \zeta _n=\zeta _n^{1+ps_0}$, 
$\tilde\sigma\pi _n=\pi _n$, $\tilde\tau _0\zeta _n=\zeta _n$, 
$\tilde\tau_0\pi _n=\pi _n\zeta _n$ and 
$\tilde\sigma ^{-1}\tilde\tau _0\tilde\sigma =\tilde\tau _0^{(1+ps_0)^{-1}}$. 

Therefore, $C_2(\Gamma _{\wt{K}'/K})\subset 
\langle\tilde\tau _0^p\rangle\subset\Gamma _{\wt{K}'/K}^p
=\langle \tilde\sigma ^p, \tilde\tau _0^p\rangle $, 
$\Gamma ^p_{\wt{K}'/K}C_p(\Gamma _{\wt{K}'/K})=
\langle \tilde\sigma ^p, \tilde\tau _0^p\rangle $ and we have a natural exact sequence 
$$\langle\tilde\sigma\rangle \To \Gamma _{\wt{K}'/K}/
\Gamma ^p_{\wt{K}'/K}C_p(\Gamma _{\wt{K}'/K})\To 
\langle\tilde\tau _0\rangle 
\,\op{mod}\,\langle\tilde\tau _0^p\rangle =\langle\tau _0\rangle ^{\Z /p}\To 1\,.$$
Note that  $\Gamma _{\wt{K}'}$ together with a lift 
$\hat\sigma \in\Gamma _{\wt{K}}$ of $\tilde\sigma $  
generate $\Gamma _{\wt{K}}$. 

The above short exact sequence implies that 
$\op{Ker}
\left (\Gamma _{<p}\To \langle\tau _0\rangle ^{\Z /p} \right )$
is generated by $\hat\sigma $ and the image of $\Gamma _{\wt{K}'}$. 
So, the kernel coincides with the image of $\Gamma _{\wt{K}}$ in $\Gamma _{<p}$.  
\end{proof}

\subsection{The field-of-norms functor} \label{S6.2} 
Let $R$ be Fontaine's ring. We have a natural embedding 
$k\subset R$ and an element $t=(\pi _n\,\op{mod}\,p)_{n\geqslant 0}\in R$. 
If $\c K=k((t))$ and $R_0=\op{Frac}\,R$ then $\c K$ is a closed subfield of $R_0$ and 
the theory of the field-of-norms functor $X$ \cite{Wtb1}, Subsection 4.3, 
identifies $X(\wt{K})$ with $\c K$ 
and $R_0$ with 
the completion of the separable closure $\c K_{sep}$. 
In particular, we have a natural inclusion $\iota _K:\Gamma _K\To\Aut\, R_0$ 
which induces the identification of $\c G=\op{Gal} (\c K_{sep}/\c K)$ and 
$\Gamma _{\wt{K}}\subset \Gamma _K$. This identification is compatible with 
the ramification filtrations on $\Gamma _K$ and $\c G$. The simplest version of this 
compatibility states that 
if $v\geqslant 0$ and $v'=\varphi _{\wt{K}/K}(v)$, where $\varphi _{\wt{K}/K}$ is 
the Herbrand function for our infinite APF extension $\wt{K}/K$, 
then 
\begin{equation}\label{E6.1} 
\iota _K(\Gamma _{\wt{K}}\cap\Gamma _K^{(v')})=\c G^{(v)}\, .
\end{equation}

As a matter of fact, there is a more general property
\begin{equation} \label{E6.2} 
\iota  _K(\Gamma _K)\cap\c I_{/\c K}^{(v)}=\iota  _K
\left (\Gamma _K^{(v')}\right )\, .
\end{equation} 
This result is formulated in \cite{Wtb1}, Subsection 3.3, 
in the case when our infinite APF extension 
is Galois but the proof works word-by-word without this assumption.

We use the results of the above sections and use the appropriate 
notation related to our field $\c K$, e.g. $\c G_{<p}=\op{Gal}(\c K_{<p}/\c K)$, where 
$\c K_{<p}$ is the subfield of $\c K_{sep}$ fixed by $\c G^pC_p(\c G)$.  
The identification $\iota _K|_{\Gamma _{\wt{K}}}$ composed with the morphism $\iota $ from 
Proposition \ref{P6.1} induces a natural continuous morphism of groups 
$\iota _{<p}:\c G_{<p}\To\Gamma _{<p}$. Now Proposition \ref{P6.1} implies 
the following property. 

\begin{Prop} \label{P6.2} 
The sequence 
$$\c G_{<p}\overset{\iota _{<p}}\To \Gamma _{<p}
\overset{j}\To \langle\tau _0\rangle ^{\Z /p}\To 1\,$$ 
is exact. 
\end{Prop}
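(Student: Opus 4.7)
The plan is to deduce Proposition \ref{P6.2} directly from Proposition \ref{P6.1} by verifying that the map $\iota:\Gamma_{\wt K}\to\Gamma_{<p}$ of the previous proposition factors through $\c G_{<p}$. Recall that under the field-of-norms identification $\c G=\Gamma_{\wt K}$, so we may regard $\iota$ as a continuous homomorphism $\c G\to\Gamma_{<p}$. Since the target $\Gamma_{<p}=\Gamma_K/\Gamma_K^p C_p(\Gamma_K)$ is by construction of period $p$ and nilpotent class $<p$, every $p$-th power and every $p$-fold commutator in $\c G$ is sent to the identity. Thus $\iota$ annihilates $\c G^pC_p(\c G)$ and descends to the claimed map $\iota:\c G_{<p}=\c G/\c G^p C_p(\c G)\To\Gamma_{<p}$.

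Once this factorization is in place, exactness is essentially automatic. The image of the induced map $\iota:\c G_{<p}\to\Gamma_{<p}$ is identical to the image of $\Gamma_{\wt K}\to\Gamma_{<p}$ from Proposition \ref{P6.1}, so the equality $\op{Im}\,\iota=\op{Ker}\,j$ is inherited. Surjectivity of $j:\Gamma_{<p}\to\langle\tau_0\rangle^{\Z/p}$ is likewise inherited from Proposition \ref{P6.1}. This gives exactness at both $\Gamma_{<p}$ and $\langle\tau_0\rangle^{\Z/p}$, which is precisely the content of Proposition \ref{P6.2}.

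There is no serious obstacle: the argument is a one-line universal-property observation followed by transport of the earlier exact sequence. The parenthetical remark that $\c G_{<p}$ is infinite while $\Gamma_{<p}$ is finite (the latter because $\Gamma_{<p}$ is topologically generated by the finite $\F_p$-vector space $K^*/K^{*p}$) is not used in the proof of exactness itself; it merely emphasizes that $\iota$ is far from injective, so no exactness at $\c G_{<p}$ is being asserted. The genuinely substantial input — that $\iota$ even exists as a map of appropriate nilpotent quotients and that the composition with $j$ is trivial with the right image — has already been packaged into Proposition \ref{P6.1} via the explicit Galois-theoretic analysis of the tower $\wt K'/K$.
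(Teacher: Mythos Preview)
Your proof is correct and follows essentially the same approach as the paper: the paper simply notes that the field-of-norms functor identifies $\c G$ with $\Gamma_{\wt K}$, observes that the induced map factors through $\c G_{<p}$, and then invokes Proposition~\ref{P6.1}. You have spelled out precisely this argument.
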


 Note that $\c G_{<p}$ is infinite but $\Gamma _{<p}$ is finite. 
 The finiteness of $\Gamma _{<p}$ follows easily from local class field theory. Indeed, 
 for $1\leqslant s<p$, let $K[s]$ be the subfield of $\bar K$ fixed by the group 
 $\Gamma _K^pC_{s+1}(\Gamma _K)$. Then all $K[s+1]/K[s]$ are abelian Galois extensions 
 with Galois groups of period $p$. By induction on $s$ and 
local class field theory these groups are quotients of 
 the finite groups $K[s]^*/K[s]^{*p}$ (use that $[K[s]:\Q _p]<\infty $) and,therefore, 
 for $K[p-1]=K_{<p}$, $[K_{<p}:K]<\infty $.

\subsection{Auxiliary statements} \label{S6.3} 
Suppose $v_{\c K}$ is the unique extension of a normalized valuation of $\c K$ to $R_0$.     
Let $\eta $ be a closed embedding of $\c K$ into $R_0$ which is compatible 
with $v_{\c K}$, i.e. for any $a\in \c K$, $v_{\c K}(a)=v_{\c K}(\eta (a))$. 

Let $c_0:=e^*(=e_Kp/(p-1))$. 
As earlier, consider $\c M\subset\c L_{\c K}$, 
$\c M_{<p}\subset\c L_{\c K_{<p}}$ and 
 $\c M_{R_0}\subset\c L_{R_0}$. 
We know that $e\in \c M$, $f\in \c M_{<p}$ (these elements  
were chosen in Subsection \ref{S1.3}) and for 
similar reasons, if $\hat\eta \in\Aut R_0$ is a lift of $\eta $ then 
$(\id _{\c L}\otimes\hat\eta )f\in\c M_{R_0}$. 

Below we consider the condition 
$(\id _{\c L}\otimes \eta )e\equiv e\,\op{mod}\,t^{(p-1)c_0}\c M_{R_0}$. 
In particular, this congruence holds modulo $\c L_{\m _R}+\c L(p)_{R_0}$ and 
following the coefficient for $D_{10}$  we obtain that $\eta\in\c I_{\c K,v}$, 
where $v=v(\eta )>0$. 

\begin{Prop} \label{P6.3} 
Suppose $(\id _{\c L}\otimes \eta )e\equiv e\,\op{mod}\,t^{(p-1)c_0}\c M_{R_0}$. 
Then 

{\rm a)}\ there is $m\in t^{(p-1)c_0}\c M_{R_0}$ such that 
$$(\id _{\c L}\otimes\eta )e\equiv (-\sigma m)\circ e\circ m\,\op{mod}\,\c L(p)_{R_0}\,;$$

 {\rm b)}\ if $\hat\eta$ is a lift of $\eta $ to $R_0$ then there is 
a unique $l\in G(\c L)\,\op{mod}\,G(\c L(p))$ such that 
$$(\id _{\c L}\otimes\hat\eta )f\equiv f\circ l\,\op{mod}\,t^{(p-1)c_0}
\c M_{R_0}\, .$$

{\rm c)}\ there is a unique lift $\eta (p)$ of $\eta $ to $\c K(p)$ 
such that 
 $(\id _{\bar{\c L}}\otimes\eta (p))\bar f=\bar f$, 
 where $\bar f=f\,\op{mod}\, t^{(p-1)c_0}\c M_{R_0}$. 
\end{Prop}

\begin{proof} a) 
 Note that $t^{(p-1)c_0}\c M_{R_0}$ is an ideal in $\c M_{R_0}$ 
and for any $i\in\N$ and $m^0\in t^{(p-1)c_0}C_i(\c M_{R_0})$, 
there is  $m_i\in t^{(p-1)c_0}C_{i}(\c M_{R_0})$ 
such that $\sigma m_i-m_i\equiv m^0\,\op{mod}\,\c L(p)_{R_0}$. 
(Use that $\sigma $ is topologically nilpotent on $t^{(p-1)c_0}\c M_{R_0}/\c L(p)_{R_0}$.) 

Therefore,  there is $m_1\in t^{(p-1)c_0}\c M_{R_0}$ such that 
$\eta (e)\equiv e-\sigma m_1+m_1\,\op{mod}\,\c L(p)_{R_0}$. This implies that 
$$\sigma (m_1)\circ \eta (e)\equiv e\circ m_1\,
\op{mod}\,t^{(p-1)c_0}C_2(\c M_{R_0})+\c L(p)_{R_0}\, .$$ 
Similarly, there is $m_2\in 
t^{(p-1)c_0}C_2(\c M_{R_0})$ such that 
$$\sigma (m_1)\circ \eta (e)\equiv -\sigma m_2+m_2 +e\circ m_1\, \op{mod}\c L(p)_{R_0},$$  
$$\sigma (m_2\circ m_1)\circ \eta (e)\equiv e\circ (m_2\circ m_1) 
\op{mod}\,t^{(p-1)c_0}C_3(\c M_{R_0})+\c L(p)_{R_0}\, ,$$ 
and so on.  
This gives 
$m_i\in t^{(p-1)c_0}C_i(\c M_{R_0})$, $1\leqslant i<p$, such that 
$$\sigma (m_{p-1}\circ \dots \circ m_1)\circ\eta (e)\equiv 
e\circ (m_{p-1}\circ \dots \circ m_1)\, \op{mod}\,\c L(p)_{R_0}\, .$$  
This proves a) with $m=m_{p-1}\circ\dots\circ m_1$. 

b) Let $(\id _{\c L}\otimes \hat\eta )f=f'$. Then for the above 
element $m$, we have 
$\sigma (m\circ f')\equiv e\circ (m\circ f')\,\op{mod}\,\c L(p)_{R_0}$ and, therefore,  
$$\sigma ((-f)\circ m\circ f')\equiv (-f)\circ m\circ f'\,\op{mod}\c L(p)_{R_0}\, .$$ 
This implies the existence of 
$l\in\c L$ such that $m\circ f'\equiv f\circ l\,\op{mod}\,\c L(p)_{R_0}$ 
(use that $\bar{\c L}_{R_0}|_{\sigma =\id }=\bar{\c L}$).

Suppose $l'\in\c L$ also satisfies statement b) of our lemma. Then we have  
$f\circ l\equiv f\circ l'\,\op{mod}\,
t^{(p-1)c_0}\c M_{R_0}$, $l\equiv l'\,\op{mod}\,t^{(p-1)c_0}\c M_{R_0}$ and 
$$l\circ (-l')\in \left (t^{(p-1)c_0}\c M_{R_0}\right )|_{\sigma =\id }
\subset \left (\c L_{\m _R}+\c L(p)_{R_0}\right )|_{\sigma =\id }=\c L(p)\,.$$

c) This follows from part b) because $\Gal (\c K_{<p}/\c K(p)=\c L(p)$. 

Proposition is proved.  
\end{proof} 
\medskip

\subsection{Isomorphism $\kappa _{<p}$} \label{S6.4}

Let $\varepsilon =(\zeta _n\,\op{mod}\,p)_{n\geqslant 0}\in R$ be Fontaine's element 
(here $\zeta _0=1$ and for $n\geqslant 1$, $\zeta _n$ 
were defined in Subsection \ref{S6.1}). 

Let $\zeta _1=1+\sum _{i\geqslant 1}[\beta _i]\pi _0^i$ 
where $[\beta _i]$ are Teichmuller representatives of $\beta _i\in k$. 
Use the identification of rings  
$R/t^{pe_K}\simeq O_{\bar K}/p$, coming from the natural projection $R\To (O_{\bar K}/p)_1$. 
This implies (note $pe_K=(p-1)c_0$, where $c_0=e^*$, cf. Subsection \ref{S6.3}) 
$$\varepsilon \equiv 1+\sum _{i\geqslant 0}\alpha _it^{c_0+pi}\,\op{mod}\,t^{(p-1)c_0}R$$
where all $\alpha _i=\beta _i^p\in k$, $\alpha _0\ne 0$ (note $\varepsilon\notin\c K$).

Assume that $h\in\op{Aut}\c K$ from Subsection \ref{S2.1} is such that 
for all $i$, $\alpha _i(h)=\alpha _i$ (and $h|_k=\id _k$). Then $v(h)=c_0$, 
cf. Subsection \ref{S4.1}, and   
$$h(t)\equiv t\varepsilon \,\op{mod}\,t^{(p-1)c_0+1}R\, .$$

This implies that for any $\tau\in\Gamma _K$, there is 
$\tilde h\in\langle h\rangle \subset\Aut\,\c K$ 
 such that 
$\iota _K(\tau )|_{\c K}\,(t)\equiv \tilde h(t)\,\op{mod}\, t^{(p-1)c_0+1}R$, 
where $\iota _K:\Gamma _K\To \Aut R_0$ was defined in Subsection \ref{S6.2}. 
Indeed, 
there is $m\in\Z _p$ such that 
$$\iota _K(\tau )(t)=t\varepsilon ^m\equiv 
t\left (1+\sum _{i\geqslant 0}\alpha _it^{c_0+pi}\right )^m
\equiv h^m(t)\,\op{mod}\,t^{(p-1)c_0+1}R$$
(use that $h(t^p)\equiv t^p\,\op{mod}\,t^{pc_0}R$), and we can take $\tilde h=h^m$. 
Clearly, such $\tilde h$ is unique modulo the subgroup $\langle h^p\rangle $.

 This means that 
$\eta :=\iota _K(\tau )|_{\c K}\tilde h^{-1}:\c K\To R_0$ satisfies the assumption 
from Proposition \ref{P6.3}. Let $\eta (p)$ be the lift from the 
part c) of that proposition, $\hat\eta\in\Aut R_0$ be such that $\hat\eta |_{\c K(p)}=\eta (p)$  
and 
$\tilde h(p):=(\hat\eta ^{-1}\iota _K(\tau ))|_{\c K(p)}$. Then 
$\tilde h(p)|_{\c K}=\tilde h$ and by Galois theory $\tilde h (p)\in\Aut \c K(p)$. 
As a result, $\tilde h(p)\in\wt{\c G}_h/C_p(\wt{\c G}_h)$ is a unique 
lift of $\tilde h$ such that   
$$(\id _{\bar{\c L}}\otimes \iota _K(\tau ))\bar f = 
(\id _{\bar{\c L}}\otimes \tilde h(p))\bar f\, .$$

If $\tilde h$ is replaced by an element of $\langle h^p\rangle $ then 
$\tilde h(p)$ is replaced by an element from $(\wt{\c G}_h/C_p(\wt{\c G}_h)^p$ but 
this will not affect $(\id _{\bar{\c L}}\otimes \tilde h(p))\bar f$. 
Therefore, the image of $\tilde h(p)$ in $\c G_h$ is well-defined.

As a result,  we obtained the map 
of sets $\kappa :\Gamma _K\To \c G_h$ uniquely characterized by 
the following equality  in 
$\bar {\c M}_{R_0}=\c M_{R_0}\,\op{mod}\,t^{c_0(p-1)}\c M_{R_0}$ 
$$(\id _{\bar{\c L}}\otimes \iota _K(\tau ))\bar f = 
(\id _{\bar{\c L}}\otimes \hat\kappa (\tau ))\bar f\, ,$$
where $\hat\kappa (\tau )\in\wt{\c G}_h/C_p(\wt{\c G}_h)\subset \Aut\c K(p)$ is any lift 
of $\kappa (\tau )\in\c G_h$ with respect to the natural projection 
$\wt{\c G}_h/C_p(\wt{\c G}_h)\To \c G_h$.

\begin{Prop} \label{P6.4} 
$\kappa $ induces a group isomorphism $\kappa _{<p}:\Gamma _{<p}\To\c G_h$. 
 \end{Prop}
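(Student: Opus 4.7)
My plan is to verify that $\kappa$ is a group homomorphism, that it factors through $\Gamma_{<p}$, and that the induced surjection $\kappa(1)$ is injective by comparing with the exact sequences of Propositions \ref{P6.2} and \ref{P2.5}.

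First, I would show that $\kappa$ is a group homomorphism. Given $g_1,g_2\in\Gamma_K$, pick $\tilde h_i\in\wt{\c G}_{h}$ with $g_i(\bar f)=\tilde h_i(\bar f)$, where $\bar f=f\,\op{mod}\,t^{(p-1)c_0}\c M_{R_0}$. The proof of Proposition \ref{P2.5} identifies the stabilizer of $\bar f$ in $\wt{\c G}_{h}$ with $\wt{\c G}_{h}^{p}C_p(\wt{\c G}_{h})$, so $\kappa$ is well-defined into $\c G_{h}$. Checking that $(g_1g_2)(\bar f)$ equals $(\tilde h_1\tilde h_2)(\bar f)$ modulo this stabilizer uses the compatibility of the $\Gamma_K$- and $\wt{\c G}_h$-actions on $f\in\c L_{R_0}$ through their common extension to automorphisms of $R_0$ restricted to $\c K_{<p}$, modulo the truncation $t^{(p-1)c_0}\c M_{R_0}$. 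Since $\c G_{h}$ has period $p$ and nilpotent class $<p$ by construction, the resulting group homomorphism factors through $\Gamma_{<p}$, yielding a surjective map $\kappa(1):\Gamma_{<p}\To\c G_{h}$.

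For injectivity I form the commutative diagram
$$
\xymatrix{
\c G_{<p}\ar[r]\ar[d]^-{\pi} & \Gamma_{<p}\ar[r]\ar[d]^-{\kappa(1)} & \langle\tau_0\rangle^{\Z/p}\ar[r]\ar[d]^-{\cong} & 1 \\
G(\c L)/G(\c L(p))\ar[r] & \c G_{h}\ar[r] & \langle h\rangle/\langle h^p\rangle\ar[r] & 1
}
$$
whose top row is Proposition \ref{P6.2}, bottom row is Proposition \ref{P2.5}, the right vertical is the identification $\tau_0\leftrightarrow h$ justified by the choice $\alpha_i(h)=\alpha_i$ of Subsection \ref{S6.4} (which makes $h$ and $\tau_0$ agree on $t$ modulo $t^{(p-1)c_0}\c M_{R_0}$), and the left vertical is the projection $\c G_{<p}\overset{\eta_0}{\simeq}G(\c L)\twoheadrightarrow G(\c L)/G(\c L(p))$. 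Commutativity of the left square comes from Subsection \ref{S1.3}, where $g\in\c G$ acts on $f$ as $f\circ\eta_0(\bar g)$, combined with Proposition \ref{P4.3} ($\c L(p)\subset\c L^{(c_0)}$), which implies that reduction modulo $t^{(p-1)c_0}\c M_{R_0}$ corresponds to reduction modulo $G(\c L(p))$.

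The hard part will be completing the five-lemma chase: after using the two right columns to pull an element of $\ker\kappa(1)$ back to $G(\c L(p))\subset\c G_{<p}$, one must show that $G(\c L(p))$ already maps to $0$ in $\Gamma_{<p}$. This is the main obstacle, because a priori there is no reason that a $\c G^{p}C_p(\c G)$-coset represented by an element of $\c G^{(c_0)}$ should lie in $\Gamma_K^{p}C_p(\Gamma_K)$. The cleanest way to close the argument is a dimension count: the nilpotent class $<p$, period $p$ quotient of $\Gamma_K(p)$ has $[K:\Q_p]+2$ generators and a single Demushkin-type relation by classical Demushkin theory, matching the explicit count $e_KN_0+2=[K:\Q_p]+2$ generators and one relation for $L_h$ obtained in Subsection \ref{S3.6}; thus $|\Gamma_{<p}|=|\c G_{h}|$ and the surjection $\kappa(1)$ must be an isomorphism.
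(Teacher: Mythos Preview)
Your plan lands on the same endgame as the paper---count generators and relations on both sides---but the final inference ``same number of generators, one relation each, hence $|\Gamma_{<p}|=|\c G_h|$'' is a genuine gap. In the category of Lie $\F_p$-algebras of nilpotent class $<p$, a one-relator quotient $\fr L/\langle l\rangle$ with $l\in C_2(\fr L)\setminus C_3(\fr L)$ does \emph{not} have order determined by the number of generators alone: the image of $l$ in $C_2/C_3\cong\Lambda^2(\fr L/C_2(\fr L))$ can lie in different $\GL$-orbits (different ranks as alternating forms once $r\geqslant 4$), and even within an orbit the higher-degree part of $l$ must be controlled. So the cardinality match is not automatic.

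The paper supplies the missing step as Lemma~\ref{L6.5}. The point is not merely that each side has one relation, but that the surjection $\kappa(1)$, once lifted to the identity on a common free object $\fr L$, forces $l_K\in\langle l_h\rangle$. Since the degree-$2$ piece of $\langle l_h\rangle$ is the line $\F_p\,\bar l_h$ and $l_K\notin C_3(\fr L)$, after rescaling one has $l_K\equiv l_h\bmod C_3(\fr L)$; then $l_h-l_K\in\langle l_h\rangle\cap C_3(\fr L)=\c I(1)$, and a short descending-filtration argument on $\c I(s)=\sum_{s_1\geqslant s}[\ldots[l_h,\fr L],\ldots,\fr L]$ gives $\c I(s)\subset\langle l_K\rangle+\c I(s+1)$, hence $\langle l_h\rangle=\c I(0)\subset\langle l_K\rangle$. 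Your commutative diagram and your diagnosis that a straight five-lemma fails are both correct; what is missing is precisely this compatibility of the two relations under $\kappa(1)$, not just their count. (The homomorphism property of $\kappa$ is also handled more concretely in the paper, by checking $\kappa(\tau^m)=h_{<p}^m$ and $\op{Ad}\kappa(\tau)=A$ via direct manipulation of $(\id_{\c L}\otimes\tau)f$; your sketch of that step is light but not wrong in spirit.)
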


\begin{proof} Suppose $\tau_1,\tau \in\Gamma _K$. 
Let $\bar c\in\bar{\c L}_{\c K}$ and $\bar A\in\Aut _{\bar{\c L}}$ be such that 
 $(\id _{\bar{\c L}}\otimes \hat\kappa (\tau ))\bar f=
\bar c\circ (\bar A\otimes \id _{\c K(p)})\bar f$. Then 
$$(\id _{\bar{\c L}}\otimes\hat\kappa (\tau_1\tau ))\bar f= 
(\id _{\bar{\c L}}\otimes \tau _1\tau )\bar f
= (\id _{\bar{\c L}}\otimes \tau _1)(\id _{\bar{\c L}}\otimes \tau )\bar f= $$
$$(\id _{\bar{\c L}}\otimes \tau _1)(\id _{\bar{\c L}}\otimes\hat\kappa (\tau ))\bar f= 
(\id _{\bar{\c L}}\otimes \tau _1)(\bar c\circ (\bar A\otimes \id _{\c K(p)})\bar f)= $$
$$(\id _{\bar{\c L}}\otimes \tau _1)\bar c\circ (\bar A\otimes \tau _1)\bar f
= (\id _{\bar{\c L}}\otimes \hat\kappa (\tau _1))
\bar c\circ (\bar A\otimes\id _{\c K(p)})(\id _{\bar{\c L}}\otimes \tau _1)\bar f= $$
$$(\id _{\bar{\c L}}\otimes \hat\kappa (\tau _1))
\bar c\circ (\bar A\otimes\id _{\c K(p)})(\id _{\bar{\c L}}
\otimes \hat\kappa (\tau _1))\bar f= (\id _{\bar{\c L}}\otimes \hat\kappa (\tau _1))
(\bar c\circ (\bar A\otimes\id _{\c K(p)})\bar f=$$
$$(\id _{\bar{\c L}}\otimes \hat\kappa (\tau _1))(\id _{\bar{\c L}}\otimes\hat\kappa (\tau ))\bar f
= (\id _{\bar{\c L}}\otimes \hat\kappa (\tau _1)\hat\kappa (\tau ))\bar f$$
and, therefore, $\kappa (\tau _1\tau )=\kappa (\tau _1)\kappa (\tau )$ 
(use that $\c G_h$ acts strictly on the orbit of $\bar f$).  

In particular, 
$\kappa $ factors through the natural projection $\Gamma _K\to\Gamma _{<p}$  
and defines the group homomorphism $\kappa _{<p}:\Gamma _{<p}\to\c G_h$.

Recall that we have the field-of-norms identification of 
$\Gamma _{\wt{K}}$ with $\c G$ and, therefore, $\kappa _{<p}$ identifies   
the groups $\kappa (\Gamma _{\wt{K}})$ and $G(\bar{\c L})\subset\c G_h$. 
Besides, $\kappa _{<p}$ induces a group isomorphism of 
$\langle \tau _0\rangle ^{\Z /p}$ and $\langle h\rangle ^{\Z /p}$. Now Proposition \ref{P6.2} 
implies that $\kappa _{<p}$ is a group isomorphism. 
\end{proof}

\subsection{Ramification filtrations} \label{S6.5}
Recall that $\Gamma _{<p}=G(L)$ has the induced fitration by the images  
$\Gamma ^{(v)}_{<p}$, $v\geqslant 0$, of the ramification subgroups 
$\Gamma _K^{(v)}$ with respect to the projection $\op{pr}_{<p}:\Gamma _K\To\Gamma _{<p}$. 
This gives the appropriate filtration by the 
ideals $L^{(v)}$ of the Lie algebra $L$.

As earlier in Subsection \ref{S6.2}, the elements of 
$i_K(\Gamma _K)\subset\Aut R_0$ 
can be considered as the elements of the ramification subsets 
$\c I_{/\c K}^{(v)}$, $v\geqslant 0$. This gives the induced filtration 
$L_{/\c K}^{(v)}$ on $L$ (the notation indicates 
to the ``upper numbering with respect to $\c K$'') such that $G(L^{(v)}_{/\c K})$ is the image of 
$\iota _K^{-1}(\iota _K(\Gamma _K)\cap\c I^{(v)}_{/\c K})$ 
under the projection $\op{pr}_{<p}$.   
By property \eqref{E6.2} we have 
$ L_{/\c K}^{(v)}=L^{(\varphi _{\wt{K}/K}(v))}$.

The elements of $\c G_h=G(L_h)$ are related to the field automorphisms $\Aut \c K(p)$, i.e.  
we have a natural embedding 
$\wt{\c G}_h/C_p(\wt{\c G}_h)\subset\Aut \c K(p)$ and 
then use the  
projection $\wt{\c G}_h/C_p(\wt{\c G}_h)\To\c G _h$, cf. Section \ref{S3}. 

Therefore, we can define for any $v\geqslant 0$, 
the ideal  $L^{(v)}_h$ in $L_h$ 
as the image of  
$\wt{\c G}_h/C_p(\wt{\c G}_h)\cap (\op{res}_{\c K(p)}\c I^{(v)}_{/\c K})$ in 
$\c G_h$. Here 
for any $\iota\in\c I$, $\op{res}_{\c K(p)}\iota =\iota |_{\c K(p)}$, i.e. 
$\op{res}_{\c K(p)}\c I^{(v)}_{/\c K}=\c I_{\c K(p), v'}$, 
where  $\varphi _{\c K(p)/\c K}(v')=v$. 

\begin{Prop} \label{P6.5} 
 For any $v\geqslant 0$, $\kappa _{<p}(L_{/\c K}^{(v)})=L_h^{(v)}$. 
\end{Prop}

\begin{proof} We need the following lemma. 

\begin{Lem}\label{L6.6}
 Let $\eta (p)\in\c I_{\c K(p)}$ 
be the morphism from 
Proposition \ref{P6.3}c). Then 
 $\eta (p)$ is a unique arithmetical
 lift of $\eta =\eta (p)|_{\c K}$.  
\end{Lem}

This lemma will be proved in Subsection \ref{S6.6} below.

Continue with the proof of our proposition.  

Suppose $\tau\in\Gamma _K$ and for some $v\geqslant 0$, 
$\iota _K(\tau)\in\c I_{/\c K}^{(v)}$ (in particular, $\tau\in I_K\subset\Gamma _K$), i.e. 
$\op{pr}_{<p}(\tau)\in L^{(v)}_{/\c K}$. 
Consider $g=\kappa (\tau )=\kappa _{<p}(\op{pr}_{<p}(\tau ))\in\c G_h$. 
If $v'\geqslant 0$ and 
$g\in L_h^{(v')}$ then there is a lift 
$g(p)\in\wt{\c G}_h/C_p(\wt{\c G}_h)\subset \Aut\c K(p)$ of $g$ 
such that $g(p)\in \op{res}_{\c K(p)}\c I^{(v')}_{/\c K}$. 

Let $\eta (p):=\iota _K(\tau )|_{\c K(p)}g(p)^{-1}\in\c I_{\c K(p)}$ 
and $\eta :=\eta (p)|_{\c K}\in\c I_{\c K}$. Using the formulas 
from the beginning of Subsection \ref{S6.4} we obtain that 
\begin{equation} \label{E6.3} (\id _{\c L}\otimes \eta )e\equiv e\,
\op{mod}\,t^{(p-1)c_0}\c M_{R_0}\, .
\end{equation} 
Then the definition of $\kappa $ implies  that 
$(\id _{\bar{\c L}}\otimes \eta (p))\bar f=\bar f$,  and 
by Lemma \ref{L6.6}, $\eta (p)$ is arithmetical lift of $\eta $.

We can easily see that \eqref{E6.3} implies that 
$$\eta (t^{-(p-1)c_0+1})\equiv t^{-(p-1)c_0+1}\,\op{mod}\,\m _R\, $$ 
and, therefore, there is $v^o>(p-1)c_0-1$ such that 
$\eta\in\c I_{\c K, v^o}$. Therefore, 
$\eta (p)\in \op{res}_{\c K(p)}\c I^{(v^o)}_{/\c K}$, 
or equivalently, 
$$\iota _K(\tau )|_{\c K(p)}
\equiv g(p)\,\op{mod}\,\op{res}_{\c K(p)}\c I^{(v^o)}_{/\c K}\, .$$

So, for all  
$0\leqslant v\leqslant (p-1)c_0-1$ and $\tau\in\Gamma _K$,  
$$\op{pr}_{<p}(\tau )\in L_{/\c K}^{(v)}\ \ \Leftrightarrow \ \ 
\kappa _{<p}(\op{pr}_{<p}\tau )
\in L_h^{(v)}\, .$$ 

It remains to prove that if $v^o>(p-1)c_0-1$ then   
$L_{/\c K}^{(v^o)}=L_h^{(v^o)}=0$. 

Suppose $\tau\in\Gamma _K$ is such that 
$\op{pr}_{<p}(\tau )\in L_{/\c K}^{(v^o)}$. 
We can assume that $\iota _K(\tau )\in\c I_{/\c K}^{(v^o)}$. 
Clearly, there is $m\in\Z _p$ such that $\iota _K(\tau )(t)=t\varepsilon ^m$. 
Then $m\equiv 0\,\op{mod}\,p$ because 
$\iota _K(\tau )|_{\c K}\in\c I_{\c K,v^o}$ and $v^o>c_0$. 

Let $\hat\tau _0\in\Gamma _K$ be a lift of $\tilde\tau _0$ from  
the proof of Proposition \ref{P6.1}. Note that  
$\iota _K(\hat\tau _0)|_{\c K}\in\c I_{\c K,c_0}$ and 
$\iota _K(\hat\tau _0)(t)=t\varepsilon $.  
This implies that 
$\hat\tau _0 ^{-m}\tau \in\Gamma _{\wt{K}}$ and 
$\iota _K(\hat\tau _0^{-m}\tau )\in\c G=\Gal (\c K_{sep}/\c K)$.  

Note that $\iota _K(\hat\tau _0 )(t)\equiv h(t)\,\op{mod}\, t^{(p-1)c_0}\m _R$. 
Therefore, we can apply the arguments from Subsection \ref{S2.5}  
(cf. application of Lemma \ref{L2.9} in the proof of 
Proposition \ref{P2.7}) to prove 
that $(\id _{\bar{\c L}}\otimes \iota _K(\hat\tau _0 ^p))\bar f=\bar f$. 
By Lemma \ref{L6.6}, 
$\iota _K(\hat\tau _0 ^p)|_{\c K(p)}$ is arithmetical 
over $\c K$. 
Hence  $\iota _K(\hat{\tau _0 }^p)|_{\c K}\in\c I_{\c K,(p-1)c_0}$ implies that 
$\iota _K(\hat\tau _0 ^p)|_{\c K(p)}\in\op{res}_{\c K(p)}
\c I_{/\c K}^{((p-1)c_0)}$ 
and 
$$\iota _K(\hat\tau _0 ^{-m}\tau )|_{\c K(p)}\in 
\op{res}_{\c K(p)}\c I_{/\c K}^{(v')}\cap\Gal (\c K(p)/\c K)=
\Gal (\c K(p)/\c K)^{(v')}\, ,$$  
where $v'=\min\{(p-1)c_0,v^o\}>(p-1)c_0-1$. By 
Proposition \ref{P2.11} this ramification subgroup is trivial 
and $\iota _K(\hat\tau _0 ^{-m}\tau )|_{\c K(p)}=e$. 

It remains to note that 
$\kappa _{<p}(\op{pr}_{<p}\tau )=\kappa (\tau )=\kappa (\hat\tau _0 ^{-m}\tau )$ 
appears as the image of $\iota _K(\hat\tau _0 ^{-m}\tau )|_{\c K(p)}$ 
under the natural projection of $\wt{\c G}_h/C_p(\wt{\c G}_h)$ to $\c G_h$. 
Therefore, $\kappa _{<p}(\op{pr}_{<p}\tau )=0$ and $\op{pr}_{<p}\tau =0$. 

For similar reasons,  $L_h^{(v^o)}=0$ if $v^o>(p-1)c_0-1$. 
\end{proof} 

\subsection {Proof of Lemma \ref{L6.6}}\label{S6.6} 
The proof is based on the same idea as the proof of 
Theorem \ref{T4.8} but is considerably easier: we do not need the difficult 
technical result from \cite{Ab3}. This happens because 
we are still studying the lifts from $\c K$ to $\c K(p)$ but these lifts 
come from $\c I_{\c K}^{(v^o)}$, where $v^o>(p-1)c_0-1$, cf. below. 
(In Theorem \ref{T4.8} 
we worked with the case $v^o=c_0$.) 

First of all, the condition 
\begin{equation}\ \label{E6.4} 
(\id _{\c L}\otimes\eta )e\equiv e\,\op{mod}\,t^{(p-1)c_0}\c M_{R_0}
\end{equation}
implies $\eta |_k=\id _k$ and 
$\eta (t^{-(p-1)c_0+1})\equiv t^{-(p-1)c_0+1}\,\op{mod}\,\m _R$ 
(just follow the coefficient for $D_{(p-1)c_0-1,0}$). As a result, we obtain 
$\eta (t)\equiv t\,\op{mod}\,t^{(p-1)c_0}\m _R$, i.e. there is $v^o>(p-1)c_0-1$ such that 
$\eta\in\c I_{\c K,v^0}$. 
Going in the opposite direction we 
can easily see that this condition is also sufficient for \eqref{E6.4}. 

Prove that $\c L^{(v^o)}\subset\c L(p)$. 

It will be sufficient to verify that all generators $\c F^0_{\gamma ,-N}$ 
of $\c L^{(v^o)}_k$ (where $\gamma\geqslant v^o$), cf. Subsection \ref{S1.4}, 
belong to $\c L(p)_k$. All such $\c F^0_{\gamma ,-N}$ are linear combinations 
of commutators of the form $[\dots [D_{a_1n_1},],\dots ,D_{a_mn_m}]$, where $m<p$, 
all $a_i\in\Z ^0(p)$, all $n_i\leqslant 0$ and 
$a_1p^{n_1}+\dots +a_mp^{n_m}\geqslant v^o$. If $\op{wt}(D_{a_in_i})=s_i$, 
cf. Subsection \ref{S2.4}, then  
$(s_i-1)c_0\leqslant a_i<s_ic_0$ and 
$$(p-1)c_0-1<v^o\leqslant a_1+\dots +a_m<(s_1+\dots +s_m)c_0\, .$$
This implies that $s_1+\dots +s_m\geqslant p$ (use that 
$a_1+\dots +a_m\in\Z $). So, all 
our commutators have weight $\geqslant p$ and, therefore, belong to $\c L(p)_k$. 

Now Corollary \ref{C4.4} implies that there is only one arithmetical lift 
of $\eta $ to $\c K(p)$. Therefore, it will be sufficient to prove that 
\medskip 

$\bullet $\ {\it if 
$\eta (p)$ is arithmetical lift of $\eta $ 
then $(\id _{\bar{\c L}}\otimes \eta (p))\bar f=\bar f$.}
\medskip

As earlier in Subsection \ref{S4.4}, let $e_{(p)}$ and 
$\varphi _{(p)}$ be the ramification index and, resp., the Herbrand function for 
$\c K(p)/\c K$. 

Suppose 
\begin{equation} \label{E6.5}
v^o\geqslant\varphi _{(p)}(e_{(p)}(p-1)c_0)\, . 
\end{equation}

Then $\eta (p)\in \c I_{\c K(p),v^o_{(p)}}$, where $v^o_{(p)}\geqslant 
e_{(p)}(p-1)c_0$ and, therefore, 
$(\id _{\bar{\c L}}\otimes\eta (p))\bar f=\bar f$ (use that for any 
$a\in\c K(p)$, $\eta (p)a-a\in at^{(p-1)c_0}R)$). 
This proves our lemma under assumption \eqref{E6.5}. 

Otherwise, we can apply the trick from Subsection \ref{S4} as follows. 

We use the notation from the beginning of Subsection \ref{S4.4}. 

Take $\c K'=\c K(r^o,N^o)$, where the parameters 
$r^o\in\Q $ and $N^o\equiv 0\,\op{mod}\, N_0$ 
satisfy the following requirements 
(this can be done by enlarging (if necessary) $N^o$ with fixed $r^o$, 
cf. Subsection \ref{S4.4}): 
\medskip 

$\bullet _1)$\ $r^o(q^o-1)\in\Z ^+(p)$ where $q^o=p^{N^o}$ and $(p-1)c_0-1<r^o<v^o$;
\medskip 

$\bullet _2)$\ $r^o(1-1/q)>(p-1)c_0-1$;
\medskip 

$\bullet _3)$\ $r^o+q^o(v^o-r^o)\geqslant \varphi _{(p)}(e_{(p)}(p-1)c_0)$. 
\medskip 

Use the uniformiser $t'$ to define an analog  
$e^{\prime }=\sum _{a\in\Z ^0(p)}t^{\prime -a}D_{a0}\in\c L_{\c K'}$ 
of $e$ for $\c K'$ and set $e^{\prime (q^o)}=\sigma ^{N^o}e'=
\sum _{a\in\Z ^0(p)}t^{\prime -aq^o}D_{a0}\in\c L_{\c K'}$. 

Verify that $\bullet _2)$ implies that 
$e\equiv e^{\prime (q^o)}\,\op{mod}\,t^{(p-1)c_0}\c M_{R_0}$. 
Indeed: 
\medskip 

1) Suppose $a\geqslant (p-1)c_0$. Then  
$t^{-a}D_{a0}, t^{\prime -aq^o}D_{a0}\in\c L(p)_{R_0}$. 
\medskip 

2) Suppose $1\leqslant s<p-1$ and  
$(s-1)c_0\leqslant a\leqslant sc_0-1$, i.e.  
$D_{a0}\in\c L(s)_k$. From the definition of $\c K'$ we have  
$t-t^{\prime q^o}\in t^{\prime q^o+r^o(q^o-1)}R$. This implies  
(use $\bullet _2$) that 
$t\equiv t^{\prime q^o}\,\op{mod}\,t^{(p-1)c_0}\m _R$ 
and, therefore, 
$$(t^{-a}-t^{\prime -aq^o})D_{a0}\in t^{-a+(p-1)c_0-1}\m _RD_{a0}
\subset t^{(p-1-s)c_0}\c L(s)_{\m _R}\subset t^{(p-1)c_0}\c M_{R_0}$$
\medskip 

Now we can proceed as in the proof of Proposition \ref{P6.3}a) 
to obtain the existence of $m\in t^{(p-1)c_0}\c M_{R_0}$ such that 
$$e\equiv (\sigma m)\circ e^{\prime (q)}
\circ (-m)\,\op{mod}\,\c L(p)_{R_0}\, ,$$
and the existence of 
$f'\in\c L_{sep}$ such that $\sigma f'=e'\circ f'$ and  
\begin{equation} \label{E6.6} f\equiv m\circ \sigma ^{N^o}(f')\,\op{mod}\,\c L(p)_{R_0}\, .
\end{equation}

Consider the fields tower 
$\c K\subset\c K'\subset \c K'\c K(p)\subset \c K'(p)\subset \c K'_{<p}$, 
where $\c K'(p)$ and $\c K'_{<p}$ are analogs of $\c K(p)$ and, resp, 
$\c K_{<p}$ for $\c K'$. 
Let $\hat\eta '$ be an arithmetical lift of $\eta $ to $\c K'_{<p}$. 
Then $\eta (p):=\hat\eta '|_{\c K(p)}$, $\eta '(p):=\hat\eta '|_{\c K'(p)}$ 
 and $\eta ':=\hat\eta '|_{\c K'}$ 
are arithmetical over $\c K$. 

So,  $\eta '\in\c I_{\c K',v'^o}$, where 
$v'^o=r^o+q^o(v^o-r^o)\geqslant \varphi _{(p)}(e_{(p)}(p-1)c_0)$. Therefore, 
we can apply assumption 
\eqref{E6.5} and   
(use that $\eta '(p)$ is arithmetical over $\eta '$) 
deduce  the following congruence  
$$(\id _{\bar{\c L}}\otimes \eta '(p))f'\equiv f'\,\op{mod}t^{\prime (p-1)c_0}\c M'_{R_0}$$ 
(here $\c M'_{R_0}$ is an analogue of $\c M_{R_0}$ for $\c K'$). This 
implies that 
$$(\id _{\bar{\c L}}\otimes \eta '(p))\sigma ^{N^o}(f^{\prime })
\equiv \sigma ^{N^o}(f^{\prime })\,\op{mod}t^{(p-1)c_0}\c M_{R_0}$$
(use that $\sigma ^{N^o}\c M'_{R_0}\subset \c M_{R_0}$). 
It remains to note that  \eqref{E6.6} implies now that 
$(\id _{\bar{\c L}}\otimes \eta (p))\bar f=\bar f$. 
The lemma is proved.

\subsection{Properties of $\Gamma _{<p}=G(L)$} \label{S6.7} 

Propositions \ref{P6.4} and \ref{P6.5} allow us to extend all results  
obtained for the group $\c G_h=G(L_h)$ in the characteristic $p$ case 
to the Galois group 
$\Gamma _{<p}$ together with its ramification filtration 
$\{\Gamma _{<p}^{(v)}\}_{v\geqslant 0}$ in the mixed characteristic case. 

We stated these results independently in the Introduction,  
cf. Theorems \ref{T0.1}-\ref{T0.6},  
and summarize them here briefly as follows.

$\bullet $\ {\it Group structure:}
\medskip 

---  $\Gamma _{<p}=G(L)$, where $L$ is the Lie $\F _p$-algebra such that 
$$0\To\c L/\c L(p)\To L\To\F _p\tau _0\To 0\, .$$

--- the Lie algebra $\c L$ was defined in Subsection \ref{S1.3}; 
\medskip \

--- $\c L_k$ has standard system of generators 
$$\{D_{an}\ |\ 
a\in\Z ^+(p), n\in\Z /N_0\}\cup\{D_0\}$$

--- the ideals $\c L(s)$, $2\leqslant s\leqslant p$, are 
given by Theorem \ref{T2.5} and the ideal $C_s(L)$ 
of commutators of order $\geqslant s$ in $L$ equals $\c L(s)/\c L(p)$;
\medskip 

--- the structure of $L$ is determined by a lift 
$\tau _{<p}$ of $\tau _0$ and the appropriate differentiation 
$\op{ad}\tau _{<p}$ is described via recurrent relation \eqref{E3.4}, 
cf. also more explicit information from Section \ref{S5}. 
\medskip 

$\bullet $\ {\it The ramification filtration:}
\medskip 

--- if $K[s]:=K_{<p}^{C_{s+1}(L)}$ 
then the maximal upper ramification number for 
$K[s]/K$ is $e^*=e_Kp/(p-1)$ if $s=1$ and 
$$\varphi _{\wt{K}/K}(e^*s-1)=
e^*+(e^*s-1-e^*)/p=e_K(1+s/(p-1))-1/p$$ 
if $2\leqslant s<p$ 
(use the estimate from Subsection \ref{S2.5} 
and the Herbrand function $\varphi _{K(\pi _1)/K}$); 
\medskip 

--- $\tau _{<p}$ is arithmetical, i.e. $\tau _{<p}\in L^{(e^*)}$,  
iff the appropriate solutions $c_1$ and 
$\{V_a\ |\ a\in\Z ^0(p)\}$ of \eqref{E3.4} satisfy the criterion from Theorem \ref{T4.8};
\medskip 

--- if $v\leqslant e^*$ and $\tau_{<p}$ is arithmetical 
then $\Gamma _{<p}^{(v)}$ is the subgroup of $\Gamma _{<p}$ 
generated by the image of $G(\c L^{(v)})$ and $\tau _{<p}$ (the ideals $\c L^{(v)}$ are described in  
Subsection \ref{S1.4});
\medskip 

--- if $v>e^*$ then $\Gamma _{<p}^{(v)}$ is the image of 
$G(\c L^{(v^*)})$, where $v^*=e^*+p(v-e^*)$ (use the Herbrand function 
for $K(\pi _1)/K$);
\medskip 

--- for explicit information about Demushkin relation for $L$, i.e. 
about the element $\op{ad}\,\tau _{<p}(D_0)$, 
cf. the end of Subsection  \ref{S5.2}.

\end{document}